\numberwithin{equation}{section}
\theoremstyle{plain}
  \newtheorem{theorem}{\sffamily Theorem}[section]
  \newtheorem{proposition}[theorem]{\sffamily Proposition}\Crefname{proposition}{Proposition}{Propositions}
  \newtheorem{lemma}[theorem]{\sffamily Lemma}\Crefname{lemma}{Lemma}{Lemmas}
  \newtheorem{corollary}[theorem]{\sffamily Corollary}
  \theoremstyle{definition}
  \newtheorem{remark}[theorem]{\sffamily Remark}\Crefname{remark}{Remark}{Remarks}
  \newtheorem{definition}[theorem]{\sffamily Definition}
  \newtheorem*{acknowledgment*}{Acknowledgment}
\newcommand{\Ubf}{\ensuremath{\mathbf{U}}}
\newcommand{\Zbf}{\ensuremath{\mathbf{Z}}}
\newcommand{\Acal}{\ensuremath{\mathcal{A}}}
\newcommand{\Bcal}{\ensuremath{\mathcal{B}}}
\newcommand{\Ccal}{\ensuremath{\mathcal{C}}}
\newcommand{\Fcal}{\ensuremath{\mathcal{F}}}
\newcommand{\Gcal}{\ensuremath{\mathcal{G}}}
\newcommand{\Hcal}{\ensuremath{\mathcal{H}}}
\newcommand{\Ical}{\ensuremath{\mathcal{I}}}
\newcommand{\Jcal}{\ensuremath{\mathcal{J}}}
\newcommand{\Lcal}{\ensuremath{\mathcal{L}}}
\newcommand{\Rcal}{\ensuremath{\mathcal{R}}}
\newcommand{\Scal}{\ensuremath{\mathcal{S}}}
\newcommand{\Tcal}{\ensuremath{\mathcal{T}}}
\newcommand{\Vcal}{\ensuremath{\mathcal{V}}}
\newcommand{\Xcal}{\ensuremath{\mathcal{X}}}
\newcommand{\norm}[1]{\|{#1}\|}
\newcommand{\spn}[1]{\mbox{span}\,\!\{{#1}\}}
\newcommand{\Vd}[1]{\ensuremath{\mathcal{V}_{#1}}}
\newcommand{\Ud}{\ensuremath{\mathcal{U}}}
\newcommand{\br}[3]{\ensuremath{\{{#1},{#2}\}_{#3}}}
\newcommand{\Ld}[1]{\ensuremath{C^{\infty}(\Vd{#1})}}
\newcommand{\J}[1]{\ensuremath{J_{#1}}}
\newcommand{\Jc}[1]{\ensuremath{{\Jcal}_{#1}}}
\newcommand{\Ham}{\ensuremath{\Hcal}}
\newcommand{\HamN}{\ensuremath{\Hcal}}
\newcommand{\R}[2]{\ensuremath{\mathbb{R}^{{#1}\times{#2}}}}
\newcommand{\C}[2]{\ensuremath{\mathbb{C}^{{#1}\times{#2}}}}
\renewcommand{\r}[1]{\ensuremath{\mathbb{R}^{#1}}}
\renewcommand{\c}[1]{\ensuremath{\mathbb{C}^{#1}}}
\newcommand{\Id}{\mbox{\,Id\,}}
\renewcommand{\Im}[1]{\ensuremath{\mathrm{Im}({#1})}}
\newcommand{\rank}[1]{\ensuremath{\mathrm{rank}({#1})}}
\newcommand{\Nf}{\ensuremath{{2N}}} 
\newcommand{\Nr}{\ensuremath{{2n}}} 
\newcommand{\Nfh}{\ensuremath{N}} 
\newcommand{\Nrh}{\ensuremath{n}} 
\newcommand{\Np}{\ensuremath{p}} 
\newcommand{\Ns}{\ensuremath{{s}}} 
\newcommand{\Nm}{\ensuremath{{k}}} 
\newcommand{\umi}{\ensuremath{U_m^i}}
\newcommand{\umj}{\ensuremath{U_m^j}} 
\newcommand{\mmi}{\ensuremath{\Omega_m^i}}
\newcommand{\exd}{\mathsf{d}} 
\newcommand{\contr}{\mathsf{i}} 
\newcommand{\dt}{\ensuremath{\Delta t\,}} 
\DeclareMathOperator*{\sym}{Sym}
\DeclareMathOperator*{\St}{St}
\DeclareMathOperator*{\Sp}{Sp}
\DeclareMathOperator*{\Un}{\mathcal{U}}
\DeclareMathOperator*{\spl}{spl}
\DeclareMathOperator*{\opt}{opt}
\DeclareMathOperator*{\tr}{tr}
\newcommand{\bl}[1]{{{\color{black}{#1}}}}
\newcommand{\Mcal}{\ensuremath{\mathcal{M}}}
\newcommand{\TM}[1]{\ensuremath{T_{#1}\Mcal}}
\newcommand{\TsM}[1]{\ensuremath{H_{#1}}}
\newcommand{\T}[2]{\ensuremath{T_{#1}#2}}
\newcommand{\fr}[1]{\ensuremath{\mathfrak{#1}}}
\newcommand{\Idm}{\ensuremath{I}}
\newcommand{\Zrm}{\ensuremath{0}}
\newcommand{\prm}{\ensuremath{\eta}} 
\newcommand{\prmh}{\ensuremath{\eta_h}} 
\newcommand{\Sprm}{\ensuremath{\Gamma}} 
\newcommand{\Sprmh}{\ensuremath{\Sprm_h}} 
\newcommand{\cay}{\ensuremath{\mathrm{cay}}}
\newcommand{\dcay}{\ensuremath{\mathrm{dcay}}}
\renewcommand{\exp}{\ensuremath{\mathrm{exp}}}
\newcommand{\dexp}{\ensuremath{\mathrm{dexp}}}
\newcommand{\coo}{\ensuremath{\psi}}
\newcommand{\dcoo}{\ensuremath{\mathrm{d}\psi}}
\newcommand{\ad}[1]{\ensuremath{\mathrm{ad}_{#1}}}
\newcommand{\dU}{\ensuremath{\dot{U}}}
\newcommand{\dZ}{\ensuremath{\dot{Z}}}
\newcommand{\hop}{\ensuremath{\mathsf{H}}}
\newcommand{\rqv}{\ensuremath{\Rcal_Q(V)}}
\newcommand{\vt}{\ensuremath{\widetilde{V}}}
\newcommand{\aqv}{\ensuremath{\Upsilon_Q(V)}}
\newcommand{\aqvt}{\ensuremath{\Upsilon_Q(\widetilde{V})}}
\newcommand{\aqvb}{\ensuremath{\overline{\Upsilon}_Q(V)}}
\newcommand{\gsv}{\ensuremath{\Theta^S_Q(\vt)}}
\newcommand{\gt}{\ensuremath{\widetilde{\Theta}}}
\newcommand{\dr}{\ensuremath{{\bl{\mathrm{d}}\Rcal}}}
\newcommand{\drq}[1]{\ensuremath{{\bl{\mathrm{d}}\Rcal_Q}_{\big|_{#1}}}}
\title{
Dynamical Reduced Basis Methods
for Hamiltonian Systems}
\author[]{%
Cecilia Pagliantini\thanks{Centre for Analysis, Scientific computing and Applications,
			  Department of Mathematics and Computer Science,
			  Eindhoven University of Technology,
			  The Netherlands.\\
Email: \texttt{c.pagliantini@tue.nl}}
}
\date{November 2019. Updated on May 2021.}
\begin{document}

%
%
%
%
%

\maketitle

\begin{abstract}
We consider model order reduction of parameterized Hamiltonian systems describing nondissipative phenomena, like wave-type and transport dominated problems. The development of reduced basis methods for such models is challenged by two main factors: the rich geometric structure encoding the physical and stability properties of the dynamics and its \emph{local} low-rank nature. To address these aspects, we propose a nonlinear structure-preserving model reduction where the reduced phase space evolves in time. In the spirit of dynamical low-rank approximation, the reduced dynamics is obtained by a symplectic projection of the Hamiltonian vector field onto the tangent space of the approximation manifold at each reduced state. A priori error estimates are established in terms of the projection error of the full model solution onto the reduced manifold. For the temporal discretization of the reduced dynamics we employ splitting techniques. The reduced basis satisfies an evolution equation on the manifold of symplectic and orthogonal rectangular matrices having one dimension equal to the size of the full model. We recast the problem on the tangent space of the matrix manifold and develop intrinsic temporal integrators based on Lie group techniques together with explicit Runge--Kutta (RK) schemes. The resulting methods are shown to converge with the order of the RK integrator and their computational complexity depends only linearly on the dimension of the full model, provided the evaluation of the reduced flow velocity has a comparable cost.
\end{abstract}

\textbf{MSC 2010.} 37N30, 65P10, 15A24, 78M34.

\textbf{Keywords.} Hamiltonian dynamics, symplectic manifolds, dynamical low-rank approximation,
reduced basis methods (RBM), Lie group integrators.

\section{Introduction}
\label{intro}
%
\bl{Hamiltonian mechanics is a cornerstone of physics and has provided the mathematical foundation for the equations of motion of systems that describe conservative processes.
Hamiltonian systems
can be viewed as dynamical extension of the first law of thermodynamics.} 
\bl{In this work,} we consider parameterized finite-dimensional canonical Hamiltonian systems: these can model \bl{energy-conserving}
nondissipative flows 
or can ensue from the numerical discretization of partial differential equations
derived from action principles.
%
%
\bl{
Many relevant models in mathematical physics can be written as Hamiltonian systems, 
and find application in, for example,
classical mechanics,
quantum dynamics,
population and epidemics dynamics.
Furthermore, partial differential equations that can be derived from action principles include Maxwell's equations, Schr\"odinger's equation, Korteweg--de Vries and the wave equation, compressible and incompressible Euler equations, Vlasov--Poisson and Vlasov--Maxwell equations.

Our target problem is as follows.} Let $\Tcal:=(t_0,T]$ be a temporal interval and let
$\Vd{\Nf}$ be a $\Nf$-dimensional 
vector space.
Let $\Sprm\subset \mathbb{R}^d$, with $d\geq 1$,
be a compact set of parameters.
For each $\prm\in\Sprm$,
we consider the initial value problem:
For $u_0(\prm)\in\Vd{\Nf}$, find $u(\cdot,\prm)\in C^1(\Tcal,\Vd{\Nf})$ such that
\begin{equation}\label{eq:dynIntro}
	\left\{
	\begin{array}{ll}
		\partial_t u(t,\prm) = \Xcal_{\HamN}(u(t,\prm),\prm), & \quad \quad\mbox{for }\;t\in\Tcal,\\
		u(t_0,\prm) = u_0(\prm),&
	\end{array}\right.	
\end{equation}
where $\Xcal_{\HamN}(u,\prm)\in \Vd{\Nf}$ is the Hamiltonian vector field at time $t\in\Tcal$,
and $C^1(\Tcal,\Vd{\Nf})$ denotes continuous differentiable functions in time taking values in $\Vd{\Nf}$.
\bl{Numerical simulations of systems like \eqref{eq:dynIntro}
can become prohibitively expensive, in terms of computational cost, if the number $\Nf$ of degrees of freedom is large.
In the context of long-time and many-query simulations,
this often leads to unmanageable demands on computational resources.
Model order reduction aims at alleviating this computational burden
by replacing the original high-dimensional problem with
a low-dimensional, efficient model that is fast to solve but that approximates well the underlying full-order dynamics.}
%
When dealing with Hamiltonian systems additional difficulties are encountered to
ensure that the geometric structure of the phase space, the stability and
the conservation properties of the original system are not hindered
during the reduction.
\bl{The main goal of this work is to develop and analyze structure-preserving model order reduction methods for the efficient, accurate, and physically consistent approximation
of high-dimensional parametric Hamiltonian systems.}

\bl{Within model order reduction techniques, projection-based} reduced basis methods (RBM)
consist in building,
during a computationally intensive offline phase,
a reduced basis from a proper orthogonal decomposition
of a set of high-fidelity simulations (referred to as snapshots)
at sampled values of time and parameters.
A reduced dynamics is then obtained via projection
of the full model onto the lower dimension space spanned by the reduced basis.
Projection-based RBM for Hamiltonian systems
tailored to preserve the geometric structure of the dynamics
were developed in \cite{LKM03} and \cite{CTB15} using
a variational Lagrangian formulation of the problem,
in \cite{PM16,AH17,BBH19} for canonically symplectic dynamical systems,
and in \cite{HP18} to deal with
Hamiltonian problems whose phase space is endowed with a \bl{state-dependent} Poisson manifold structure.
Although the aforementioned approaches can provide robust and efficient reduced models,
they might require a sufficiently large approximation space to achieve even moderate accuracy.
This can be ascribed to the fact that nondissipative phenomena, like advection and wave-type problems,
do not possess a \emph{global} low-rank structure, and are therefore characterized 
by slowly decaying Kolmogorov widths, as highlighted in 
\bl{\cite{deVore17}}. 
Hence, local reduced spaces seem to provide a more effective instrument
to deal with this kind of dynamical systems.

In this work we propose a nonlinear projection-based model order reduction
of parameterized Hamiltonian systems where the reduced basis is dynamically evolving in time.
The idea is to consider a modal decomposition of the approximate solution to \eqref{eq:dynIntro}
of the form
\begin{equation}\label{eq:exp}
	u(t,\prm)\approx 
		\sum_{i=1}^{\Nr} U_i(t) Z_i(t,\prm),\qquad n\ll N,\quad\forall\, t\in\Tcal,\,\prm\in\Sprm,
\end{equation}
where the reduced basis $\{U_i\}_{\bl{i=1}}^{\Nr}\subset\r{\Nf}$, and
the expansion coefficients $\{Z_i\}_{\bl{i=1}}^{\Nr}\subset\r{}$
can both change in time.
The approximate reduced flow is then generated by the velocity field
resulting from the projection of the vector field $\Xcal_{\HamN}$ in \eqref{eq:dynIntro}
into the tangent space of the reduced space at the current state.
By imposing that the evolving reduced space spanned by $\{U_i\}_{\bl{i=1}}^{\Nr}$ is a symplectic manifold at every time
the continuous reduced dynamics preserves the geometric structure of the full model.

Low-rank approximations based on a modal decomposition of the approximate solution
with dynamically evolving modes similar to \eqref{eq:exp},
have been widely studied in quantum mechanics
in the
multiconfiguration time-dependent Hartree (MCTDH) method, see e.g. \cite{Lubich08}.
In the finite dimensional setting,
a similar approach, known as
dynamical low-rank approximation \cite{KL07},
provides
a low-rank factorization updating technique
to efficiently compute approximations of
time-dependent large data matrices,
by projecting the matrix time derivative onto the tangent space
of the low-rank matrix manifold.
For the discretization of time-dependent stochastic PDEs, Sapsis and Lermusiaux
proposed in \cite{SL09} the so-called
dynamically orthogonal (DO) scheme,
where the deterministic approximation space
adapts over time by evolving according to the
differential operator describing the stochastic problem.
A connection between dynamical low-rank approximations and DO methods was established in \cite{MNZ15}.
Further, a geometric perspective on
the relation between dynamical low-rank approximation, DO field equations and model order reduction
in the context of time-dependent matrices
has been investigated in \cite{FL18}.
To the best of our knowledge, the only work to address structure-preserving dynamical low-rank approximations
is \cite{MN17},
where the authors develop a DO discretization of stochastic PDEs possessing a symplectic Hamiltonian structure.
The method proposed in \cite{MN17} consists in recasting the continuous PDE
into the complex setting
and then applying a dynamical low-rank strategy
to derive field equations for the evolution of the stochastic modal decomposition of the approximate solution.
The approach we propose for the nonlinear model order reduction of problem \eqref{eq:dynIntro}
adopts a geometric perspective similar to \cite{FL18} and
yields an evolution equation for the reduced solution
analogous to \cite{MN17}, although we do not resort to
a reformulation of the evolution problem in a complex framework.

Concerning the temporal discretization of the reduced dynamics describing the evolution of the approximate solution \eqref{eq:exp},
the low-dimensional system for the expansion coefficients $\{Z_i\}_{\bl{i=1}}^{\Nr}$
is Hamiltonian and
can be approximated using standard symplectic integrators.
On the other hand, the development of numerical schemes for the evolution of the reduced basis
is more involved as two major challenges need to be addressed:
(i) a structure-preserving approximation requires that
the discrete evolution remains on the manifold of
symplectic and (semi-)orthogonal rectangular matrices;
(ii) since the reduced basis forms a matrix with
one dimension equal to the size of the full model,
the effectiveness of the model reduction might be thwarted
by the computational cost associated with the numerical solution of the corresponding evolution equation.
Various methods have been proposed in the literature to solve differential equations on manifolds, see e.g. \cite[Chapter IV]{HaLuWa06}.
Most notably projection methods apply a conventional discretization scheme and, after each time step,
a ``correction'' is made by projecting the updated approximate solution to the constrained manifold.
Alternatively, methods based on the use of local parameterizations of the manifold,
so-called \emph{intrinsic}, are well-developed in the context of differential equations on Lie groups, \emph{cf.} \cite[Section IV.8]{HaLuWa06}.
The idea is to recast the evolution equation 
in the corresponding Lie algebra, which is a linear space, and to then recover an approximate solution
in the Lie group via
local coordinate maps.
Instrinsic methods possess excellent structure-preserving properties
provided the local coordinate map can be computed exactly.
However,
they usually require a considerable computational cost associated with
the evaluation of the coordinate map and its inverse at every time step (possibly at every stage within each step).

We propose and analyze two structure-preserving temporal approximations
and show that their computational complexity scales linearly with the dimension of the full model,
under the assumption that the velocity field of the reduced flow can be evaluated at
a comparable cost.
\bl{The first algorithm we propose is a Runge--Kutta Munthe--Kaas (RK-MK) method \cite{MK95}, and we rely on the action on the orthosymplectic matrix manifold by the quadratic Lie group of unitary matrices.}
%
By exploiting the structure of our dynamical low-rank approximation and the properties of the
local coordinate map supplied by the Cayley transform,
we prove
the computational efficiency of this algorithm with respect to
the dimension of the high-fidelity model. However,
a polynomial dependence on the number of stages of the RK temporal integrator
might yield high computational costs in the presence of full models of moderate dimension.
To overcome this issue, we propose a discretization scheme based on the use of
retraction maps to recast the local evolution of the reduced basis
on the tangent space of the matrix manifold at the current state, inspired by the works \cite{CO02,CO03}
on intrinsic temporal integrators for orthogonal flows.

The remainder of the paper is organized as follows.
In Section \ref{sec:Hamdyn} the geometric structure underlying the dynamics
of Hamiltonian systems is presented, and the concept of orthosymplectic basis
spanning the approximate phase space is introduced.
In Section \ref{sec:OSmatrix} we describe the properties of linear symplectic maps
needed to guarantee that the geometric structure of the full dynamics
is inherited by the reduced problem.
Subsequently, in Section \ref{sec:SDLR} we develop and analyze a dynamical low-rank approximation strategy
resulting in dynamical systems for the reduced orthosymplectic basis and the corresponding expansion coefficients
in \eqref{eq:exp}.
In Section \ref{sec:EvolROM} efficient and structure-preserving temporal integrators for
the reduced basis evolution problem are derived.
\bl{Section~\ref{sec:numExp} concerns a numerical test where the proposed method is compared to a global reduced basis approach.}
%
We present some concluding remarks and open questions in Section \ref{sec:conclusions}.

\section{Hamiltonian dynamics on symplectic manifolds}\label{sec:Hamdyn}
The phase space of Hamiltonian dynamical systems is endowed with a differential
Poisson manifold structure
which underpins the physical properties of the system.
Most prominently, Poisson structures encode a family of conserved quantities that, by Noether's theorem,
are related to symmetries of the Hamiltonian.
Here we focus on dynamical systems whose phase space
has a global Poisson structure that is canonical and nondegenerate,
namely symplectic.
\bl{\begin{definition}[Symplectic vector space]\label{def:sympform}
Let $\Vd{\Nf}$ be a $\Nf$-dimensional real vector space.
A skew-symmetric bilinear form $\omega:\Vd{\Nf}\times \Vd{\Nf}\rightarrow\mathbb{R}$ is \emph{symplectic} if it is nondegenerate, i.e., if
$\omega(u,v)=0$, for any $v\in\Vd{\Nf}$, then $u=0$.
The map $\omega$ is called a \emph{linear symplectic
structure} on $\Vd{\Nf}$, and $(\Vd{\Nf},\omega)$ is called a \emph{symplectic vector space}.
\end{definition}}
\bl{On a finite $\Nf$-dimensional smooth manifold $\Vd{\Nf}$,
let $\omega$ be a 2-form, that is, for any $p\in\Vd{\Nf}$, the map
$\omega_p:T_p\Vd{\Nf}\times T_p\Vd{\Nf}\rightarrow\mathbb{R}$ is skew-symmetric and bilinear on the tangent space to $\Vd{\Nf}$ at $p$,
and it varies smoothly in $p$.
The 2-form $\omega$ is a \emph{symplectic structure} if it is closed and
$\omega_p$ is symplectic for
all $p\in\Vd{\Nf}$, in the sense of Definition~\ref{def:sympform}.
A manifold $\Vd{\Nf}$ endowed with a symplectic structure $\omega$
is called a \emph{symplectic manifold} and denoted by $(\Vd{\Nf},\omega)$.}
%
%
The algebraic structure of a symplectic manifold $(\Vd{\Nf},\omega)$
can be characterized through the definition of a bracket:
Let $\exd \Fcal$ be the $1$-form given by the exterior derivative of
a given smooth function $\Fcal$. Then,
for all $\Fcal,\Gcal\in\Ld{\Nf}$,
\begin{equation}\label{eq:Jdef}
	\br{\Fcal}{\Gcal}{\Nf}:= \tensor[_{T^*\Vd{\Nf}\,}]{\langle}{}\exd \Fcal, \Jc{\Nf}\,\exd \Gcal\tensor[]{\rangle}{_{\,T\Vd{\Nf}}}
		= \omega(\Jc{\Nf}\,\exd \Fcal,\Jc{\Nf}\,\exd \Gcal),
\end{equation}
where $\tensor[_{T^*\Vd{\Nf}\,}]{\langle}{}\cdot, \cdot\tensor[]{\rangle}{_{\,T\Vd{\Nf}}}$ denotes the duality pairing between
the cotangent and the tangent bundle.
The \bl{function}
$\Jc{\Nf}:T^*\Vd{\Nf} \rightarrow T\Vd{\Nf}$ is a
contravariant $2$-tensor on the manifold $\Vd{\Nf}$,
commonly referred to as \emph{Poisson tensor}.
The space $\Ld{\Nf}$ of real-valued smooth functions over the manifold $(\Vd{\Nf},\br{\cdot}{\cdot}{\Nf})$,
together with the bracket $\br{\cdot}{\cdot}{\Nf}$, forms a Lie algebra \cite[Proposition 3.3.17]{AbMa78}.

\bl{To any function $\Hcal\in\Ld{\Nf}$, the symplectic form $\omega$ allows to associate a vector field $\Xcal_{\Hcal}\in T\Vd{\Nf}$, called \emph{Hamiltonian vector field}, via the relation}
\begin{equation}\label{eq:cont}
	d\Hcal=\contr_{\Xcal_{\Hcal}}\omega,
\end{equation}
where $\contr$ denotes the contraction operator.
Since $\omega$ is nondegenerate, $\Xcal_{\Hcal}\in T\Vd{\Nf}$ is unique.
\bl{Any} vector field $\Xcal_{\Hcal}$ on a manifold $\Vd{\Nf}$ determines a phase
flow, namely
a one-parameter group of diffeomorphisms
$\Phi^t_{\Xcal_{\Hcal}}:\Vd{\Nf}\rightarrow\Vd{\Nf}$ satisfying
$d_t\Phi^t_{\Xcal_{\Hcal}}(u)=\Xcal_{\Hcal}(\Phi^t_{\Xcal_{\Hcal}}(u))$
for all $t\in\Tcal$ and $u\in\Vd{\Nf}$, with $\Phi^0_{\Xcal_{\Hcal}}(u)=u$.
\bl{The flow of a Hamiltonian vector field
satisfies $(\Phi^t_{\Xcal_\Ham})^*\omega=\omega$, for each $t\in\Tcal$, that is
$\Phi^t_{\Xcal_\Ham}$ is a symplectic diffeomorphism (symplectomorphism) on its domain.}
%
\begin{definition}[Symplectic map]\label{def:SymplecticMap}
Let $(\Vd{\Nf},\br{\cdot}{\cdot}{\Nf})$
and
$(\Vd{\Nr},\br{\cdot}{\cdot}{\Nr})$ be symplectic manifolds of finite dimension
$\Nf$ and $\Nr$ respectively, with $\Nrh\leq \Nfh$.
A smooth map $\Psi:(\Vd{\Nf},\br{\cdot}{\cdot}{\Nf})\rightarrow(\Vd{\Nr},\br{\cdot}{\cdot}{\Nr})$
is called \emph{symplectic} if it satisfies
\begin{equation*}
	\Psi^*\br{\Fcal}{\Gcal}{\Nr}=\br{\Psi^*\Fcal}{\Psi^*\Gcal}{\Nf},
	\qquad \forall\,\Fcal, \Gcal \in\Ld{\Nr}.
\end{equation*}
\end{definition}
In addition to possessing a symplectic phase flow, Hamiltonian dynamics
is characterized by
the existence of differential invariants, and symmetry-related conservation laws.
\begin{definition}[Invariants of motion]\label{def:inv}
A function $\Ical\in\Ld{\Nf}$ is an \emph{invariant of motion} of 
the dynamical system \eqref{eq:cont}, 
if $\br{\Ical}{\HamN}{\Nf}(u)=0$ for all $u\in \Vd{\Nf}$.
Consequently, $\Ical$ is constant along the orbits of $\Xcal_{\HamN}$.
\end{definition}
The Hamiltonian, if time-independent, is an invariant of motion.
A particular subset of the invariants of motion of a dynamical system is
given by the \emph{Casimir invariants}, smooth functions $\Ccal$ on $\Vd{\Nf}$ that
$\br{\cdot}{\cdot}{\Nf}$-commute with every other functions, i.e.
$\br{\Ccal}{\Fcal}{\Nf}=0$ for all $\Fcal\in\Ld{\Nf}$.
Since Casimir invariants are associated with
the center of the Lie algebra $(\Ld{\Nf},\br{\cdot}{\cdot}{\Nf})$,
symplectic manifolds only possess trivial Casimir invariants.

Resorting to a coordinate system, the canonical structure on a symplectic manifold
can be characterized
by canonical charts whose existence is postulated in
\cite[Proposition 3.3.21]{AbMa78}.

\begin{definition}\label{def:CCoor}
Let $(\Vd{\Nf},\br{\cdot}{\cdot}{\Nf})$ be a symplectic manifold and $(U,\psi)$ a
cotangent coordinate chart
$\psi(u) = (q^1(u),\ldots,q^{\Nfh}(u),p_1(u),\ldots,p_{\Nfh}(u))$, for all $u\in U$.
Then $(U,\psi)$ is a \emph{symplectic canonical chart}
if and only if
$\{q^i,q^j\}_{\Nf}=\{p_i,p_j\}_{\Nf}=0$, and $\{q^i,p_j\}_{\Nf}=\delta_{i,j}$ on $U$ for all $i,j=1,\ldots,\Nfh$.
\end{definition}
In the local canonical coordinates introduced in Definition~\ref{def:CCoor}, the vector bundle map $\Jc{\Nf}$,
defined in \eqref{eq:Jdef},
takes the canonical symplectic form
\begin{equation*}
\J{\Nf} :=
	\begin{pmatrix}
		 0 & \Id \\
		-\Id & 0 \\
	\end{pmatrix}: T^*\Vd{\Nfh}\times T^*\Vd{\Nfh}\longrightarrow T\Vd{\Nf},
\end{equation*}
where $\Id$ and $0$ denote the identity and zero map, respectively.
%
\bl{Symplectic canonical charts on a symplectic vector space allow to identify a
\emph{K\"alher structure}, namely a compatible combination of a scalar product and symplectic form, as follows.}
\bl{On a symplectic vector space $(\Vd{\Nf},\omega)$,
the operator $\J{\Nf}^\top$ is an \emph{almost complex structure}, that is a linear map 
on $\Vd{\Nf}$ such that $\J{\Nf}^\top\circ\J{\Nf}^\top= -\Id$.
Furthermore, $\J{\Nf}^\top$
is \emph{compatible} with the symplectic structure $\omega$, namely,
for any $u,v\in\Vd{\Nf}$, $u\neq 0$, it holds
$$\omega(\J{\Nf}^\top u,\J{\Nf}^\top v)=\omega(u,v),\qquad\mbox{ and }\qquad \omega(u,\J{\Nf}^\top u)>0.$$
A symplectic form $\omega$ on a vector space $\Vd{\Nf}$ together with a compatible positive almost complex structure $\J{\Nf}^\top$ determines an inner product on $\Vd{\Nf}$, given by
\begin{equation}\label{eq:inprod}
	(u,v):=\omega(u,\J{\Nf}^\top v),\quad \forall\,u,v\in\Vd{\Nf}.
\end{equation}
A symplectic basis
on $(\Vd{\Nf},\omega)$
is an orthonormal basis for the compatible inner product \eqref{eq:inprod},
and we refer to it as \emph{orthosymplectic}.}
%
%
A subspace $\Ud$ of a symplectic vector space $(\Vd{\Nf},\omega)$
is called \emph{Lagrangian} if
it coincides with its symplectic complement in $\Vd{\Nf}$,
\bl{namely the set of $u\in\Vd{\Nf}$ such that $\omega(u,v)=0$ for all $v\in \bl{\Ud}$.}
As a consequence of the fact that any basis of a Lagrangian subspace
of a symplectic vector space
can be extended to a symplectic basis,
every symplectic vector space admits an orthosymplectic basis,
\emph{cf.} for example \cite[Section 1.2]{CdS01}.

With the definitions introduced hitherto,
we can recast the dynamical system \eqref{eq:dynIntro}
on a symplectic vector space $(\Vd{\Nf},\omega)$
as a Hamiltonian initial value problem.
For each $\prm\in\Sprm$,
and for $u_0(\prm)\in\Vd{\Nf}$, find $u(\cdot,\prm)\in C^1(\Tcal,\Vd{\Nf})$ such that
\begin{equation}\label{eq:dynN}
	\left\{
	\begin{array}{ll}
		\partial_t u(t,\prm) = \J{\Nf}\nabla_u \HamN(u(t,\prm);\prm), & \quad \quad\mbox{for }\;t\in\Tcal,\\
		u(t_0,\prm) = u_0(\prm),&
	\end{array}\right.	
\end{equation}
where  $\HamN(\cdot,\prm)\in\Ld{\Nf}$ is the Hamiltonian function,
and $\nabla_u$ denotes the gradient with respect to the variable $u$.
The well-posedness of \eqref{eq:dynN} is guaranteed by assuming that,
for any fixed $\prm\in\Sprm$, the operator 
$\Xcal_{\HamN}:\Vd{\Nf}\times\Sprm\rightarrow \r{}$ defined as
$\Xcal_{\HamN}(u,\prm):=\J{\Nf}\nabla_u \HamN(u;\prm)$
is Lipschitz continuous in $u$ uniformly in $t\in\Tcal$ in a suitable norm.

\section{Orthosymplectic matrices}\label{sec:OSmatrix}
In order to construct surrogate models
preserving the physical and geometric properties of the original Hamiltonian dynamics
we build approximation spaces of reduced dimension endowed
with the same geometric structure of the full model.
To this aim, the reduced space is constructed as the span of
suitable symplectic and orthonormal time-dependent bases,
so that the reduced space
inherits the \bl{geometric} structure of the original dynamical system.
In this Section we describe the properties of linear symplectic maps
between finite dimensional symplectic vector spaces.

Analogously to \cite[p. 168]{AbMa78}, we can easily extend the characterization of
symplectic linear maps to the case of vector spaces of different dimension as in the following result.
\begin{lemma}
Let $(\Vd{\Nf},\omega)$ and $(\Vd{\Nr},\omega)$ be symplectic vector spaces of
finite dimension $\Nf$ and $\Nr$, respectively, with $\Nfh \geq \Nrh$.
A linear \bl{map} $M_+:(\Vd{\Nf},\omega)\rightarrow(\Vd{\Nr},\omega)$ is symplectic, in the sense of Definition \ref{def:SymplecticMap},
if and only if \bl{the corresponding matrix representation $M_+\in\R{\Nr}{\Nf}$ satisfies}
$M_+\J{\Nf} M_+^\top = \J{\Nr}$.
\end{lemma}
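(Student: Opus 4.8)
The plan is to read off the defining identity of Definition~\ref{def:SymplecticMap} in canonical coordinates, where by \eqref{eq:Jdef} the brackets become the canonical bilinear forms $\br{\Fcal}{\Gcal}{\Nr}=(\nabla\Fcal)^\top\J{\Nr}\nabla\Gcal$ and $\br{\Fcal}{\Gcal}{\Nf}=(\nabla\Fcal)^\top\J{\Nf}\nabla\Gcal$, and then to compare the two sides as bilinear forms in the gradients. Fixing symplectic canonical charts on $(\Vd{\Nf},\omega)$ and $(\Vd{\Nr},\omega)$ so that $\Jc{\Nf},\Jc{\Nr}$ are represented by $\J{\Nf},\J{\Nr}$ (Definition~\ref{def:CCoor}), I first use that, since $M_+$ is linear, the pullback acts by composition, $M_+^*\Fcal=\Fcal\circ M_+$, and the chain rule yields $\nabla(M_+^*\Fcal)(u)=M_+^\top(\nabla\Fcal)(M_+u)$.

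Substituting into the coordinate form of the brackets, the left-hand side of the symplecticity relation reads
\[
	M_+^*\br{\Fcal}{\Gcal}{\Nr}(u)=(\nabla\Fcal)(M_+u)^\top\,\J{\Nr}\,(\nabla\Gcal)(M_+u),
\]
whereas the right-hand side reads
\[
	\br{M_+^*\Fcal}{M_+^*\Gcal}{\Nf}(u)=(\nabla\Fcal)(M_+u)^\top\,M_+\J{\Nf}M_+^\top\,(\nabla\Gcal)(M_+u).
\]
For the implication ($\Leftarrow$), assuming $M_+\J{\Nf}M_+^\top=\J{\Nr}$ makes the two displays coincide for all $\Fcal,\Gcal\in\Ld{\Nr}$, which is exactly the condition in Definition~\ref{def:SymplecticMap}. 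For the converse, symplecticity forces $(\nabla\Fcal)(M_+u)^\top(M_+\J{\Nf}M_+^\top-\J{\Nr})(\nabla\Gcal)(M_+u)=0$ for every $\Fcal,\Gcal$ and $u$; choosing $\Fcal,\Gcal$ to be the linear coordinate functions, whose gradients are arbitrary prescribed constant vectors $a,b\in\r{\Nr}$, I obtain $a^\top(M_+\J{\Nf}M_+^\top-\J{\Nr})b=0$ for all $a,b$, hence $M_+\J{\Nf}M_+^\top=\J{\Nr}$.

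The argument is essentially bookkeeping with the transpose produced by the chain rule, so the only point requiring care is the converse direction, where one must guarantee that the gradients exhaust all of $\r{\Nr}$. This is precisely where the freedom to test against linear functions with prescribed constant gradients is used, so that no rank hypothesis on $M_+$ is needed beforehand; indeed the resulting identity $M_+\J{\Nf}M_+^\top=\J{\Nr}$, together with the invertibility of $\J{\Nr}$, forces $M_+$ to have full row rank $\Nr$ a posteriori. This mirrors the equal-dimensional characterization in \cite[p.~168]{AbMa78}, the sole difference being that the map now goes from the larger space to the smaller one, which exchanges the roles of $M_+$ and $M_+^\top$ relative to the classical statement.
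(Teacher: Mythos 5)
Your proof is correct, and it is precisely the ``easy extension'' the paper alludes to by citing \cite[p.~168]{AbMa78}: the paper itself omits the argument, and your coordinate computation --- pulling back through the linear map via the chain rule and testing the resulting identity of bilinear forms against linear functions with prescribed constant gradients --- is the standard route for the equal-dimensional case, correctly adapted to the rectangular setting. The point you single out, that constant gradients make the converse independent of any a priori rank assumption on $M_+$, is exactly the detail that needs care, and you handle it properly.
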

We define
\emph{symplectic right inverse} of the symplectic matrix $M_+\in\R{\Nr}{\Nf}$ the matrix
$M=\J{\Nf} M_+^\top\J{\Nr}^\top\in\R{\Nf}{\Nr}$.
\bl{It can be easily verified that $M_+M=\Idm_{\Nr}$, and
that $M:(\Vd{\Nr},\omega)\rightarrow(\Vd{\Nf},\omega)$ is the adjoint operator
with respect to the symplectic form $\omega$, i.e. $\omega(M_+ u,y)=\omega(u,My)$
for any $u\in\Vd{\Nf}$, and $y\in\Vd{\Nr}$.
Furthermore, the symplectic condition
$M_+\J{\Nf} M_+^\top = \J{\Nr}$ is equivalent to $M^\top\J{\Nf} M = \J{\Nr}$.}
Owing to this equivalence,
with a small abuse of notation, we will say that $M\in\R{\Nf}{\Nr}$ is symplectic if \bl{it belongs to the space}
$$\Sp(\Nr,\r{\Nf}) := \{L\in\R{\Nf}{\Nr}:\;L^\top\J{\Nf} L = \J{\Nr}\}.$$

\begin{definition}\label{def:U}
A matrix $M\in\R{\Nf}{\Nr}$ is called \emph{orthosymplectic} if \bl{it belongs to the space}
$$\Un(\Nr,\r{\Nf}):=\St(\Nr,\r{\Nf})\cap \Sp(\Nr,\r{\Nf}),$$
where $\St(\Nr,\r{\Nf}) := \{M\in\R{\Nf}{\Nr}:\;M^\top M = \Idm_{\Nr}\}$ is the Stiefel manifold.
\end{definition}%
Orthosymplectic rectangular matrices can be characterized as follows.
\begin{lemma}\label{lem:MM+}
Let $M_+\in\R{\Nr}{\Nf}$ be symplectic and let $M\in\R{\Nf}{\Nr}$ be its symplectic inverse.
Then, $M_+ M_+^\top=\Idm_{\Nr}$ if and only if $M = M_+^\top$.
\end{lemma}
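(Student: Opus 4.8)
The plan is to prove the two implications separately, recalling from the discussion preceding the statement that the symplectic inverse is $M=\J{\Nf}M_+^\top\J{\Nr}^\top$, that the relation $M_+M=\Idm_{\Nr}$ holds \emph{unconditionally}, and that $M_+$ being symplectic means $M_+\J{\Nf}M_+^\top=\J{\Nr}$. Throughout I would use only the elementary identities $\J{\Nf}^\top\J{\Nf}=\Idm_{\Nf}$ and $\J{\Nr}\J{\Nr}^\top=\Idm_{\Nr}$, which follow from $\J{}^\top=\J{}^{-1}=-\J{}$. The reverse implication is then immediate: if $M=M_+^\top$, substituting into $M_+M=\Idm_{\Nr}$ gives at once $M_+M_+^\top=M_+M=\Idm_{\Nr}$.

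For the forward implication I would assume $M_+M_+^\top=\Idm_{\Nr}$ and show that the Frobenius distance $\|M-M_+^\top\|_F$ vanishes, rather than attempting to verify the matrix identity directly. First I would record that under this hypothesis $M$ itself has orthonormal columns: expanding $M^\top M=\J{\Nr}M_+\J{\Nf}^\top\J{\Nf}M_+^\top\J{\Nr}^\top$, collapsing the two adjacent $\J{\Nf}$ factors, and then using $M_+M_+^\top=\Idm_{\Nr}$, yields $M^\top M=\J{\Nr}\J{\Nr}^\top=\Idm_{\Nr}$. Next I would expand
\[
\|M-M_+^\top\|_F^2=\tr(M^\top M)-\tr(M^\top M_+^\top)-\tr(M_+M)+\tr(M_+M_+^\top),
\]
and observe that all four traces equal $\tr(\Idm_{\Nr})$: indeed $M^\top M=\Idm_{\Nr}$ and $M_+M_+^\top=\Idm_{\Nr}$ by the two orthonormality relations, $M_+M=\Idm_{\Nr}$ by the unconditional identity, and $\tr(M^\top M_+^\top)=\tr((M_+M)^\top)=\tr(M_+M)$. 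With signs $+,-,-,+$ the four contributions cancel, so $\|M-M_+^\top\|_F^2=0$ and hence $M=M_+^\top$.

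The conceptual content of the forward direction, and the main obstacle, is that $M_+M_+^\top=\Idm_{\Nr}$ forces the range of $M_+^\top$ to be invariant under $\J{\Nf}$; equivalently $\J{\Nf}M_+^\top=M_+^\top\J{\Nr}$, which collapses $M=\J{\Nf}M_+^\top\J{\Nr}^\top$ to $M_+^\top\J{\Nr}\J{\Nr}^\top=M_+^\top$. Trying to establish this invariance head-on is essentially circular, since the invariance relation is precisely the identity one wants. The point of the trace/Pythagoras computation above is that it sidesteps this difficulty: it lets the symplectic condition $M_+\J{\Nf}M_+^\top=\J{\Nr}$ and the orthonormality $M_+M_+^\top=\Idm_{\Nr}$ combine into a single scalar cancellation, which is where the real work of the lemma is concentrated.
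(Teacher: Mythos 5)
Your proof is correct, but it takes a genuinely different route from the paper's. The paper writes $M=[A\,|\,B]$ in block form, derives $A^\top A=B^\top B=\Idm_{\Nrh}$ and $A^\top\J{\Nf}B=\Idm_{\Nrh}$ from the orthogonality and symplecticity constraints, and concludes $A=\J{\Nf}B$ by a column-wise equality-in-Cauchy--Schwarz argument (unit-norm columns pairing to $1$ must coincide), so that $M=[A\,|\,\J{\Nf}^\top A]=M_+^\top$ by the definition of the symplectic inverse; the converse is the same one-line substitution you give. Your forward direction replaces the block decomposition by a single global cancellation: the four traces in $\norm{M-M_+^\top}^2$ all equal $\tr(\Idm_{\Nr})$ because $M^\top M=\J{\Nr}M_+M_+^\top\J{\Nr}^\top=\Idm_{\Nr}$, $M_+M_+^\top=\Idm_{\Nr}$ by hypothesis, and $M_+M=\Idm_{\Nr}$ unconditionally. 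This is in essence the same equality-in-Cauchy--Schwarz phenomenon, lifted from individual columns to the Frobenius inner product, and it is arguably tidier: it avoids the slightly informal ``pairwise parallel'' step. What the paper's block argument buys in exchange is the explicit structural description $M=[A\,|\,\J{\Nf}^\top A]$ of orthosymplectic matrices, which is not a throwaway: it is reused later (e.g.\ in Lemma~\ref{lem:isoM}) to set up the isomorphism with the complex Stiefel manifold, so your proof, while complete for the stated equivalence, does not yield that by-product.
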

\begin{proof}
Let $M=[A\,|\,B]$ with $A,B\in\R{\Nf}{\Nrh}$.
The (semi-)orthogonality and symplecticity of $M_+$
give $A^\top A=B^\top B=\Idm_{\Nrh}$
and $A^\top \J{\Nf} B=\Idm_{\Nrh}$. These conditions imply that
the column vectors of $A$ and $\J{\Nf} B$ have unit norm and are pairwise parallel,
hence $A=\J{\Nf}B$. Therefore, $M=[A\,|\,\J{\Nf}^\top A]$
with $A^\top A=\Idm_{\Nrh}$ and $A^\top\J{\Nf} A=0_{\Nrh}$.
The definition of symplectic inverse yields
$M_+^\top = \J{\Nf}^\top M\J{\Nr}
= \J{\Nf}^\top [A\,|\,\J{\Nf}^\top A]\J{\Nr}
=[\J{\Nf}^\top A\,|\, {-A}]\J{\Nr}
=[A\,|\, \J{\Nf}^\top A] = M$.

Conversely, the symplecticity of $M_+$ implies
$M_+ M_+^\top = 
M_+ \J{\Nf} M_+^\top\J{\Nr}^\top = \Idm_{\Nr}$.
\end{proof}
In order to design numerical methods for evolution problems on
the manifold $\Un(\Nr,\r{\Nf})$ of orthosymplectic rectangular matrices,
we will need to
characterize its tangent space. 
To this aim we introduce the
vector space $\fr{so}(\Nr)$ of skew-symmetric $\Nr\times \Nr$ real matrices
$\fr{so}(\Nr) := \{M\in\R{\Nr}{\Nr}:\;M^\top+M = \Zrm_{\Nr}\}$,
and the vector space $\fr{sp}(\Nr)$ of Hamiltonian $\Nr\times \Nr$ real matrices, namely
$\fr{sp}(\Nr) := \{M\in\R{\Nr}{\Nr}:\;M\J{\Nr}+\J{\Nr}M^\top = \Zrm_{\Nr}\}$.
Throughout, if not otherwise specified, we will denote with $G_{\Nr}:=\Un(\Nr)$
the Lie group of orthosymplectic $\Nr\times\Nr$ matrices and
with $\fr{g}_{\Nr}$ the corresponding Lie algebra $\fr{g}_{\Nr}:=\fr{so}(\Nr)\cap \fr{sp}(\Nr)$,
with bracket given by the matrix commutator
\bl{$\ad{M}(L)=[M,L]:=ML-LM$, for any $M,L\in\fr{g}_{\Nr}$.}

\section{Orthosymplectic dynamical reduced basis method}\label{sec:SDLR}
Assume we want to solve the parameterized Hamiltonian problem \eqref{eq:dynN}
at $p\in\mathbb{N}$ samples of the parameter
$\{\prm_j\}_{j=1}^p=:\Gamma_h\subset\r{pd}$.
To \bl{simplify} the notation we take $d=1$, namely we assume that the parameter $\prm$
is a scalar quantity, for vector-valued $\prm$ the derivation
henceforth applies \bl{\textit{mutatis mutandis}}.
Then, the Hamiltonian system \eqref{eq:dynN} can be recast as a set of ordinary differential equations
in a $\Nf\times\Np$ matrix unknown. 
Let $\prm_h\in\r{\Np}$ denote the vector of sampled parameters,
the evolution problem reads:
For $\Rcal_0(\prm_h):=\big[u_0(\prm_1)|\ldots|u_0(\prm_{\Np})\big]\in\R{\Nf}{\Np}$,
find $\Rcal\in C^1(\Tcal,\R{\Nf}{\Np})$ such that
\begin{equation}\label{eq:dynNR}
	\left\{
	\begin{array}{ll}
		\dot{\Rcal}(t) = \Xcal_{\HamN}(\Rcal(t),\prm_h),&\quad\quad\mbox{for }\; t\in\Tcal,\\
		\Rcal(t_0) = \Rcal_0(\prm_h). &
	\end{array}\right.	
\end{equation}
Let $\Nrh\ll\Nfh$, to characterize the reduced solution manifold
we consider an approximation of the solution of \eqref{eq:dynNR} of the form
\begin{equation}\label{eq:Rrb}
	\Rcal(t)\approx R(t) 
		= \sum_{i=1}^{\Nr} \Ubf_i(t) \Zbf_i(t,\prm_h) = U(t)Z(t)^\top,
\end{equation}
where $U=\big[\Ubf_1|\ldots|\Ubf_{\Nr}\big]\in\R{\Nf}{\Nr}$, and
$Z\in\R{\Np}{\Nr}$
is such that
$Z_{j,i}(t)=\Zbf_i(t,\prm_j)$ for $i=1,\ldots,\Nr$, and $j=1,\ldots,\Np$.
Since we aim at a structure-preserving model order reduction of \eqref{eq:dynNR},
we impose that the basis $U(t)$ is orthosymplectic at all $t\in\Tcal$,
in analogy with the symplectic reduction techniques employing globally defined reduced spaces.
Here, since $U$ is changing in time, this means that
we constrain its evolution to the manifold $\Un(\Nr,\r{\Nf})$ from Definition \ref{def:U}.
With this \bl{in} mind, the reduced solution is sought in
the reduced space
defined as
\begin{equation}\label{eq:SRM}
	\Mcal^{\spl}_{\Nr} := \{R\in\R{\Nf}{\Np}:\; R = UZ^\top\;\mbox{with}\;
		U\in\Mcal,\, Z\in V^{\Np\times\Nr} \},
\end{equation}
where
\begin{equation}\label{eq:ManUZ}
\begin{aligned}
	\Mcal& :=\Un(\Nr,\r{\Nf})=\{U\in\R{\Nf}{\Nr}:\;U^\top U=\Idm_{\Nr},\; U^\top \J{\Nf} U = \J{\Nr}\},\\
	V^{\Np\times\Nr} & :=\{Z\in\R{\Np}{\Nr}:\;\rank{Z^\top Z + \J{\Nr}^\top Z^\top Z\J{\Nr}} = \Nr\}.
\end{aligned}
\end{equation}
Note that \eqref{eq:SRM} is a smooth manifold of dimension $2(\Nfh+\Np)\Nrh-2\Nrh^2$,
as follows from the characterization of the tangent space given in Proposition \ref{prop:TM}.
The characterization of the reduced manifold \eqref{eq:SRM} is analogous to \cite[Definition 6.2]{MN17}.
Let $C\in\R{\Nr}{\Nr}$ denote the correlation matrix $C:=Z^\top Z$.
The full-rank condition in \eqref{eq:ManUZ},
\begin{equation}\label{eq:fullRank}
	\rank{C + \J{\Nr}^\top C\J{\Nr}} = \Nr,
\end{equation}
guarantees that, for $Z$ fixed, if $UZ^\top=WZ^\top$ with $U,W\in\Mcal$, then $U=W$.
If the full-rank condition \eqref{eq:fullRank} is satisfied, then
the number $\Np$ of samples of the parameter $\prm\in\Sprm$ satisfies $\Np\geq \Nrh$.
This means that, for a fixed $\Np$,
a too large reduced basis
might lead to a violation of
the full rank condition, which would entail
a rank-deficient evolution problem for the coefficient matrix $Z\in\R{\Np}{\Nr}$.
This is related to the problem of overapproximation in dynamical low-rank techniques, see \cite[Section 5.3]{KL07}.
\bl{Observe also that if $\Np\geq\Nr$ and $\rank{Z}=\Nr$ then the full rank condition \eqref{eq:fullRank} is always satisfied.
In general, the elements of $\Mcal^{\spl}_{\Nr}$ might not have full rank $\Nr$:
for any $R\in\Mcal^{\spl}_{\Nr}$ it holds
$\rank{Z}\leq \rank{R}\leq \min\{\Nr,\Np\}$.}

The decomposition $UZ^\top$ of matrices in $\Mcal^{\spl}_{\Nr}$
is not unique:
the map $\phi: (U,Z)\in \Mcal\times V^{\Np\times\Nr}\mapsto R=UZ^\top\in \Mcal^{\spl}_{\Nr}$
is surjective but not injective. In particular, $(\Mcal\times V^{\Np\times\Nr},\Mcal^{\spl}_{\Nr},\phi, \Un(\Nr))$ is a fiber bundle with fibers given by
the group of unitary matrices $\Un(\Nr)$, and $\Mcal^{\spl}_{\Nr}$ is isomorphic to
$(\Mcal/\Un(\Nr))\times V^{\Np\times\Nr}$.
%
Indeed, let $U_1\in\Mcal$ and $Z_1\in V^{\Np\times\Nr}$,
then, for any arbitrary $A\in\Un(\Nr)$, it holds $U_2:=U_1 A\in \Mcal$,
$Z_2:=Z_1 A\in V^{\Np\times\Nr}$,
and $U_1 Z_1^\top = U_2 Z_2^\top$.

In dynamically orthogonal approximations \cite{SL09}
a characterization of the reduced solution is obtained by fixing a gauge constraint
in the tangent space of the reduced solution manifold.
%
For the manifold $\Mcal^{\spl}_{\Nr}$ the tangent space at $R\in\Mcal^{\spl}_{\Nr}$
is defined as the set of $X\in\R{\Nf}{\Np}$ such that there exists a differentiable path
$\gamma:(-\varepsilon,\varepsilon)\subset\Tcal\rightarrow\R{\Nf}{\Np}$
with
$\gamma(0)=R$, $\dot{\gamma}(0)=X$.
The tangent vector at
$U(t)Z^\top(t)\in\Mcal^{\spl}_{\Nr}$ is of the form
$X = \dU Z^\top+U\dZ^\top$,
where $\dU$ and $\dZ$ denote the time derivatives of $U(t)$ and $Z(t)$, respectively.
Taking the derivative of the orthogonality constraint on $U$ yields $\dU^\top U+U^\top\dU = 0$.
Analogously, the symplecticity constraint gives $\dU^\top\J{\Nf}U+U^\top\J{\Nf}\dU=0$
which is equivalent to $\dU^\top U\J{\Nr}+\J{\Nr}U^\top\dU=0$
owing to the fact that $U\in\Sp(\Nr,\r{\Nf})$.
Therefore, the tangent space of $\Mcal^{\spl}_{\Nr}$ at $UZ^\top$ is defined as
\begin{equation}\label{eq:TM}
\begin{aligned}
	\T{UZ^\top}{\Mcal^{\spl}_{\Nr}} = \{X\in\R{\Nf}{\Np}:\; & X =X_U Z^\top + UX_Z^\top\;\,
	\mbox{with}\;\,
		X_Z\in\R{p}{\Nr},\\
		&\;X_U\in\R{\Nf}{\Nr},\,X_U^\top U\in\fr{g}_{\Nr}\}.
\end{aligned}
\end{equation}
However, this parameterization is not unique. Indeed, let $S\in\fr{g}_{\Nr}$ be arbitrary:
if $X_U^\top U\in\fr{g}_{\Nr}$ then the matrix $(X_U+US)^\top U$ belongs to $\fr{g}_{\Nr}$,
and
the pairs $(X_U,X_Z)$ and $(X_U+US,X_Z+ZS)$ identify the same tangent vector $X:=X_UZ^\top+UX_Z^\top$.
We fix the parameterization of the tangent space 
as follows.
%
\begin{proposition}\label{prop:TM}
The tangent space of $\Mcal^{\spl}_{\Nr}$ at $UZ^\top$ defined in \eqref{eq:TM}
is uniquely parameterized by the space $H_{(U,Z)} := \TsM{U} \times \R{\Np}{\Nr}$, where
\begin{equation}\label{eq:HU}
	\TsM{U} := \{X_U\in \R{\Nf}{\Nr}:\;X_U^\top U=0,\,X_U\J{\Nr}=\J{\Nf}X_U\}.
\end{equation}
This means that the map
\begin{equation*}
\begin{array}{lcll}
	\Psi: & H_{(U,Z)} & \longrightarrow & \T{UZ^\top}{\Mcal^{\spl}_{\Nr}}\\
		  & (X_U,X_Z) & \longmapsto &  X_U Z^\top + UX_Z^\top,
\end{array}
\end{equation*}
is a bijection.
\end{proposition}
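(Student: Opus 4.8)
The plan is to establish that $\Psi$ is both injective and surjective, after first noting that it is well defined and that $\TsM{U}$ is a linear subspace (it is cut out by the linear conditions in \eqref{eq:HU}). Well-definedness is immediate: for any $X_U\in\TsM{U}$ we have $X_U^\top U=0\in\fr{g}_{\Nr}$, so $\Psi(X_U,X_Z)=X_UZ^\top+UX_Z^\top$ has exactly the form required in \eqref{eq:TM}. A structural fact I would record at the outset and use repeatedly is the intertwining identity $\J{\Nf}U=U\J{\Nr}$ valid for every $U\in\Mcal$; this is a direct consequence of the block form $U=[A\mid\J{\Nf}^\top A]$ obtained in the proof of Lemma~\ref{lem:MM+}, and differentiating it along $\Mcal$ shows that every $X_U'\in T_U\Mcal$ (i.e.\ every $X_U'$ with $X_U'^\top U\in\fr{g}_{\Nr}$) likewise satisfies $\J{\Nf}X_U'=X_U'\J{\Nr}$.

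For injectivity, since $\TsM{U}$ is linear it suffices to analyse the kernel. Suppose $\Psi(X_U,X_Z)=X_UZ^\top+UX_Z^\top=0$ with $(X_U,X_Z)\in H_{(U,Z)}$. Left-multiplying by $U^\top$ and using $U^\top X_U=(X_U^\top U)^\top=0$ together with $U^\top U=\Idm_{\Nr}$ yields $X_Z^\top=0$, hence $X_UZ^\top=0$. The delicate point, and the one I expect to be the main obstacle, is that $Z\in V^{\Np\times\Nr}$ need \emph{not} have full column rank, so $X_UZ^\top=0$ does not by itself force $X_U=0$. To circumvent this I would exploit the symplectic intertwining condition $X_U\J{\Nr}=\J{\Nf}X_U$ built into \eqref{eq:HU} (equivalently $\J{\Nf}^\top X_U=X_U\J{\Nr}^\top$, obtained by multiplying by the orthogonal factors $\J{\Nf}^\top,\J{\Nr}^\top$). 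Writing $C:=Z^\top Z$, right-multiplying $X_UZ^\top=0$ by $Z$ gives $X_UC=0$, while the intertwining relation gives $X_U\J{\Nr}^\top C\J{\Nr}=\J{\Nf}^\top X_UC\J{\Nr}=0$. Adding the two identities produces $X_U\,(C+\J{\Nr}^\top C\J{\Nr})=0$, and since the full-rank condition \eqref{eq:fullRank} makes the $\Nr\times\Nr$ matrix $C+\J{\Nr}^\top C\J{\Nr}$ invertible, we conclude $X_U=0$. Thus $\ker\Psi=\{0\}$.

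For surjectivity, I would use the gauge freedom highlighted just before the statement. Given $X\in\T{UZ^\top}{\Mcal^{\spl}_{\Nr}}$, write $X=X_U'Z^\top+UX_Z'^\top$ as in \eqref{eq:TM}, so that $S:=X_U'^\top U\in\fr{g}_{\Nr}$. Setting $X_U:=X_U'+US$ and $X_Z:=X_Z'+ZS$ leaves the tangent vector unchanged (this is precisely the admissible reparameterization $(X_U,X_Z)\mapsto(X_U+US,X_Z+ZS)$), and gives $X_U^\top U=S+S^\top=0$ because $S\in\fr{so}(\Nr)$ is skew-symmetric. It then remains to verify $X_U\in\TsM{U}$, i.e.\ the intertwining $X_U\J{\Nr}=\J{\Nf}X_U$: this follows by combining $X_U'\J{\Nr}=\J{\Nf}X_U'$ and $U\J{\Nr}=\J{\Nf}U$ from the opening remark with the fact that $S\in\fr{sp}(\Nr)\cap\fr{so}(\Nr)$ commutes with $\J{\Nr}$ (indeed $S\J{\Nr}=-\J{\Nr}S^\top=\J{\Nr}S$). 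Hence $(X_U,X_Z)\in H_{(U,Z)}$ with $\Psi(X_U,X_Z)=X$.

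In summary, the whole argument reduces to these three steps, and the only genuinely nontrivial one is the injectivity computation above: the possible rank deficiency of $Z$ defeats the naive conclusion, and its resolution forces one to couple the symplectic intertwining property defining $\TsM{U}$ in \eqref{eq:HU} with the exact full-rank condition \eqref{eq:fullRank} that characterizes $V^{\Np\times\Nr}$. Everything else is routine linear algebra, and together the three steps show that $\Psi$ is a bijection, which also recovers the stated dimension count $2(\Nfh+\Np)\Nrh-2\Nrh^2$ for $\Mcal^{\spl}_{\Nr}$.
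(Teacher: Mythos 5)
Your overall route is the paper's: well-definedness, injectivity via the full-rank condition \eqref{eq:fullRank}, and surjectivity by stripping the component of $X_U'$ along $U$ (your choice $X_U'+US$ with $S=X_U'^\top U$ is literally $(\Idm_{\Nf}-UU^\top)X_U'$, since $U^\top X_U'=S^\top=-S$, so it coincides with the paper's). The injectivity argument is correct and essentially identical to the one in the paper.

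The gap is in the ``structural fact'' you record at the outset and then lean on for surjectivity: it is \emph{not} true that every $X_U'$ with $X_U'^\top U\in\fr{g}_{\Nr}$ satisfies $\J{\Nf}X_U'=X_U'\J{\Nr}$. That condition only constrains the component $UU^\top X_U'$; the component orthogonal to $\spn{U}$ is completely free. For instance, any $V$ with $U^\top V=0$ satisfies $U^\top V\in\fr{g}_{\Nr}$ trivially, yet the intertwining $V\J{\Nr}=\J{\Nf}V$ cuts the space $\{V:\,U^\top V=0\}$ (of dimension $2(\Nfh-\Nrh)\cdot\Nr$) down to a subspace of half that dimension, so generic such $V$ do not intertwine. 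Consequently your verification that $X_U=X_U'+US$ lies in $\TsM{U}$ is unjustified if one reads \eqref{eq:TM} literally; indeed, read literally, the set in \eqref{eq:TM} is strictly larger than the image of $\Psi$ whenever $Z$ has full column rank, so surjectivity onto that set would actually be false. What saves the statement — and what the paper's proof tacitly invokes with the phrase ``the symplectic constraint on $U$ and its temporal derivative'' — is that a genuine tangent vector arises as $X=\dU Z^\top+U\dZ^\top$ for a path $U(t)\in\Mcal$, and differentiating the identity $\J{\Nf}U(t)=U(t)\J{\Nr}$ along the path gives $\J{\Nf}\dU=\dU\J{\Nr}$ directly. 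With that additional (intended) property of $X_U'$ in hand, the rest of your computation closes correctly, since $US$ intertwines because $S$ commutes with $\J{\Nr}$. So the fix is to obtain the intertwining for $X_U'$ from its being the velocity of a path in $\Mcal$ — not from $X_U'^\top U\in\fr{g}_{\Nr}$ — or, equivalently, to include $X_U\J{\Nr}=\J{\Nf}X_U$ in the description \eqref{eq:TM} before arguing surjectivity.
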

\begin{proof}
We first observe that, if $(X_U,X_Z)\in H_{(U,Z)}$ then
$X_U^\top U\in\fr{g}_{\Nr}$ is trivially satisfied,
and hence 
$X_U Z^\top + UX_Z^\top\in \T{UZ^\top}{\Mcal^{\spl}_{\Nr}}$.

To show that the map $\Psi$ is injective, we take $X=0\in \T{UZ^\top}{\Mcal^{\spl}_{\Nr}}$.
By the definition of the tangent space \eqref{eq:TM}, the zero vector admits the representation
$0=X_U Z^\top + UX_Z^\top$ with $U^\top X_U=0$.
This implies $0=U^\top(X_U Z^\top + UX_Z^\top)=X_Z^\top$.
Hence, $X_U Z^\top=0$ and
\begin{equation*}
	0 = X_U Z^\top Z+\J{\Nf}X_UZ^\top Z\J{\Nr}^\top
	  = X_U Z^\top Z+\J{\Nf}^\top X_U \J{\Nr}\J{\Nr}Z^\top Z\J{\Nr}^\top
	  = X_U (Z^\top Z+ \J{\Nr} Z^\top Z \J{\Nr}^\top),
\end{equation*}
which implies $X_U=0$ in view of the full-rank condition \eqref{eq:fullRank}.

For the surjectivity of $\Psi$ we show that
\begin{equation*}
	\forall\, X\in \T{UZ^\top}{\Mcal^{\spl}_{\Nr}}\qquad
		\exists\, (X_U,X_Z)\in H_{(U,Z)}\quad\mbox{such that}\quad X = X_U Z^\top + UX_Z^\top.
\end{equation*}
Any $X\in \T{UZ^\top}{\Mcal^{\spl}_{\Nr}}$ can be written as $X = \dU Z^\top + U\dZ^\top$
where $\dZ\in\R{\Np}{\Nr}$ and $\dU\in\R{\Nf}{\Nr}$ satisfies
$\dU^\top U\in\fr{g}_{\Nr}$. 
Hence, the tangent vector $X$ can be recast as
\begin{equation*}
	 X = \dU Z^\top + U\dZ^\top = U(\dZ^\top + U^\top \dU Z^\top) + \big((\Idm_{\Nf}-UU^\top)\dU\big) Z^\top.
\end{equation*}
We need to show that the pair $(X_U,X_Z)$, defined as
$X_U:=(\Idm_{\Nf}-UU^\top)\dU$ and $X_Z:=\dZ + Z \dU^\top U$,
belongs to the space $H_{(U,Z)}$.
From the orthogonality of $U$ it easily follows that
\begin{equation*}
	U^\top X_U = U^\top(\Idm_{\Nf}-UU^\top)\dU = U^\top \dU-U^\top\dU = 0.
\end{equation*}
To prove that $X_U=\J{\Nf}^\top X_U\J{\Nr}$, we
introduce the matrix $S:=Z^\top Z+\J{\Nr}Z^\top Z\J{\Nr}^\top\in\R{\Nr}{\Nr}$
for which it holds $S\J{\Nr}=\J{\Nr}S$.
We then show the equivalent condition
$X_U S\J{\Nr}^\top = \J{\Nf}^\top X_US$.
First,
we add to $X_U$ the zero term $(\Idm_{\Nf}-UU^\top)U(\dZ^\top Z + \J{\Nr}\dZ^\top Z\J{\Nr}^\top)$,
and use the symplectic constraint on $U$ and its temporal derivative to get
\begin{equation*}
\begin{aligned}
	X_U & = (\Idm_{\Nf}-UU^\top)\dU = (\Idm_{\Nf}-UU^\top)\dU SS^{-1}\\
		& = 	(\Idm_{\Nf}-UU^\top)\big(U(\dZ^\top Z + \J{\Nr}\dZ^\top Z\J{\Nr}^\top)
			+ \dU (Z^\top Z+\J{\Nr}Z^\top Z\J{\Nr}^\top)\big)S^{-1}\\
		& = (\Idm_{\Nf}-UU^\top)(XZ+\J{\Nf} XZ\J{\Nr}^\top)S^{-1}.
\end{aligned}
\end{equation*}
Then, using the commutativity of the symplectic unit $\J{\Nf}$ and
the projection onto the orthogonal complement to the space spanned by $U$,
i.e. $(\Idm_{\Nf}-UU^\top)\J{\Nf} = \J{\Nf}(\Idm_{\Nf}-UU^\top)$, results in
\begin{equation*}
\begin{aligned}
	X_US\J{\Nr}^\top & = (\Idm_{\Nf}-UU^\top)(XZ+\J{\Nf} XZ\J{\Nr}^\top)\J{\Nr}^\top\\
		& = \J{\Nf}^\top(\Idm_{\Nf}-UU^\top)\J{\Nf}(XZ\J{\Nr}^\top+\J{\Nf}^\top XZ)
		= \J{\Nf}^\top X_US.
\end{aligned}
\end{equation*}
\end{proof}
\bl{\begin{remark}
Proposition \ref{prop:TM} provides a \emph{connection} on the
fiber bundle $(\Mcal\times V^{\Np\times\Nr},\Mcal^{\spl}_{\Nr},\phi,\Un(\Nr))$
via the smooth splitting
$\T{UZ^\top}{\Mcal^{\spl}_{\Nr}}= V_{(U,Z)} \oplus H_{(U,Z)}$, for any 
$UZ^\top\in\Mcal^{\spl}_{\Nr}$.
The factor $V_{(U,Z)}$, the \textit{vertical space}, is the subspace of $\T{UZ^\top}{\Mcal^{\spl}_{\Nr}}$ that consists of all vectors tangent to the fiber of $UZ^\top$, while the space $H_{(U,Z)} := \TsM{U} \times \R{\Np}{\Nr}$,
with $\TsM{U}$ defined in \eqref{eq:HU},
is a \textit{horizontal space}.
This decomposition into the subset of directions tangent to the fiber and its complementary space provides a unique parameterization of the tangent space.
We refer the reader to e.g. \cite[Chapter 2]{KN63} and \cite{EAS99}, for further details on the topic.
%
\end{remark}}
\bl{Owing to} Proposition \ref{prop:TM},
the tangent space of $\Mcal^{\spl}_{\Nr}$
can be characterized as
\begin{equation*}
\begin{aligned}
	\T{UZ^\top}{\Mcal^{\spl}_{\Nr}} = \{X\in\R{\Nf}{\Np}:\;
		& X = X_U Z^\top + UX_Z^\top\;\, \mbox{with}\;\,
		 X_Z\in\R{p}{\Nr},\\
		&\, X_U\in\R{\Nf}{\Nr},
		 \,X_U^\top U=0,\,X_U\J{\Nr}=\J{\Nf}X_U\},
\end{aligned}
\end{equation*}
Henceforth, we consider $\Mcal$ endowed with the metric
induced by the ambient space $\C{\Nf}{\Nr}$, namely the Frobenius
inner product $\langle A,B\rangle:=\tr(A^\hop B)$,
where $A^\hop$ denotes the conjugate transpose of the complex matrix $A$,
and we will denote with $\norm{\cdot}$ the Frobenius norm.
\bl{Note that, on simple Lie algebras, the Frobenius inner product is a multiple of the Killing form.}

\subsection{Dynamical low-rank symplectic variational principle}
For any fixed $\prm\in\Sprm$, the vector field $\Xcal_{\HamN}$
in \eqref{eq:dynN} at time $t$ belongs to $\T{u(t)}{\Vd{\Nf}}$.
Taking the cue from
dynamical low-rank approximations \cite{KL07},
we derive a dynamical system on the reduced space $\Mcal^{\spl}_{\Nr}$ via
projection of the velocity field $\Xcal_{\HamN}$ of the full dynamical system \eqref{eq:dynNR}
onto the tangent space of $\Mcal^{\spl}_{\Nr}$ at the current state.
The reduced dynamical system is therefore optimal in the sense that
the resulting vector field is the best dynamic approximation of $\Xcal_{\HamN}$, in the Frobenius norm,
at every point on the manifold $\Vd{\Nf}$.
To preserve the geometric structure of the full dynamics we
construct a projection which is symplectic for each value of the parameter $\prm_j\in\Sprmh$,
with $1\leq j\leq \Np$.
To this aim, let us introduce on the symplectic vector space $(\Vd{\Nf},\omega)$
the family of skew-symmetric bilinear forms
$\omega_j:\R{\Nf}{\Np}\times\R{\Nf}{\Np}\rightarrow\r{}$
defined as
\begin{equation}\label{eq:omegaj}
	\omega_j(a,b):=\omega(a_j,b_j),\qquad 1\leq j\leq \Np,
\end{equation}
where $a_j\in\r{\Nf}$ denotes the $j$-th column
of the matrix $a\in\R{\Nf}{\Np}$, and similarly for $b_j\in\r{\Nf}$.
\begin{proposition}\label{prop:SymplProj}
Let $\T{R}{\Mcal^{\spl}_{\Nr}}$ be 
the tangent space of the symplectic reduced manifold $\Mcal^{\spl}_{\Nr}$, defined in \eqref{eq:SRM},
at a given $R:=UZ^\top\in\Mcal^{\spl}_{\Nr}$.
Let $S:=Z^\top Z+\J{\Nr}Z^\top Z\J{\Nr}\in\R{\Nr}{\Nr}$.
Then, the map
\begin{equation*}
\begin{array}{lcll}
	\Pi_{\T{R}{\Mcal^{\spl}_{\Nr}}}: & \R{\Nf}{\Np} & \longrightarrow & \T{R}{\Mcal^{\spl}_{\Nr}}\\
		  & w & \longmapsto & (\Idm_{\Nf}-UU^\top)(wZ + \J{\Nf}wZ\J{\Nr}^\top)S^{-1}Z^\top+ UU^\top w,
\end{array}
\end{equation*}
is a symplectic projection, in the sense that
\begin{equation*}
	\sum_{j=1}^p \omega_j\big(w-\Pi_{\T{R}{\Mcal^{\spl}_{\Nr}}}w,y\big) = 0,\qquad \forall\, y\in\T{R}{\Mcal^{\spl}_{\Nr}},
\end{equation*}
where $\omega_j$ is defined in \eqref{eq:omegaj}.
\end{proposition}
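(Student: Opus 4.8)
The plan is to establish three facts in turn: that $\Pi:=\Pi_{\T{R}{\Mcal^{\spl}_{\Nr}}}$ actually lands in the tangent space, that the condition $\sum_j\omega_j(\cdot,\cdot)=0$ can be recast as an ordinary Frobenius orthogonality, and finally that the residual $w-\Pi w$ is Frobenius-orthogonal to the tangent space by a direct computation exploiting the structure of $S$. First I would check well-posedness: writing $\Pi w = X_U Z^\top + U X_Z^\top$ with $X_Z:=w^\top U$ (so that $UX_Z^\top=UU^\top w$) and $X_U:=(\Idm_{\Nf}-UU^\top)(wZ+\J{\Nf}wZ\J{\Nr}^\top)S^{-1}$, it suffices to verify that $X_U\in\TsM{U}$, i.e. the two conditions in \eqref{eq:HU}. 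The orthogonality $X_U^\top U=0$ is immediate from $U^\top(\Idm_{\Nf}-UU^\top)=0$. For $X_U\J{\Nr}=\J{\Nf}X_U$ I would use that $U$ orthosymplectic implies both $(\Idm_{\Nf}-UU^\top)\J{\Nf}=\J{\Nf}(\Idm_{\Nf}-UU^\top)$ and $S\J{\Nr}=\J{\Nr}S$ (hence $S^{-1}\J{\Nr}=\J{\Nr}S^{-1}$), exactly as in the proof of Proposition~\ref{prop:TM}; pushing $\J{\Nr}$ through $S^{-1}$ and using $\J{\Nr}^\top\J{\Nr}=\Idm_{\Nr}$, $\J{\Nf}^2=-\Idm_{\Nf}$ equates the two sides.

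Next I would translate the symplectic statement into a Frobenius one. In the canonical coordinates of Definition~\ref{def:CCoor}, $\omega$ is represented by $\J{\Nf}$ (the overall sign being immaterial here), so that $\sum_{j=1}^{\Np}\omega_j(a,b)=\tr(a^\top\J{\Nf}b)$ and the claim becomes $\tr\big((w-\Pi w)^\top\J{\Nf}y\big)=0$ for all $y\in\T{R}{\Mcal^{\spl}_{\Nr}}$. The key observation is that the tangent space is invariant under left multiplication by $\J{\Nf}$: for $y=Y_UZ^\top+UY_Z^\top$, the orthosymplectic identity $\J{\Nf}U=U\J{\Nr}$ (a consequence of Lemma~\ref{lem:MM+}) gives $\J{\Nf}y=(\J{\Nf}Y_U)Z^\top+U(\J{\Nr}Y_Z^\top)$, and one checks that $\J{\Nf}Y_U$ again satisfies the two conditions of $\TsM{U}$, using $\J{\Nf}^\top U=U\J{\Nr}^\top$ and $\J{\Nf}^2=-\Idm_{\Nf}$. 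Since $\J{\Nf}$ is invertible, $\J{\Nf}\,\T{R}{\Mcal^{\spl}_{\Nr}}=\T{R}{\Mcal^{\spl}_{\Nr}}$, so the symplectic orthogonality is equivalent to the plain Frobenius orthogonality $\langle w-\Pi w,y\rangle=0$ for every $y$ in the tangent space.

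It then remains to prove this Frobenius orthogonality. Writing $w-\Pi w=(\Idm_{\Nf}-UU^\top)g$ with $g:=w-(wZ+\J{\Nf}wZ\J{\Nr}^\top)S^{-1}Z^\top$, the projector annihilates the $UY_Z^\top$ component of $y$ (since $(\Idm_{\Nf}-UU^\top)U=0$) and, using $U^\top Y_U=0$, leaves $\langle w-\Pi w,y\rangle=\tr(Z^\top g^\top Y_U)$. Setting $M:=wZ$ and $C:=Z^\top Z$, a direct manipulation gives $Z^\top g^\top=M^\top-CS^{-1}(M^\top+\J{\Nr}M^\top\J{\Nf}^\top)$. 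The decisive step is to rewrite the cross term by transporting $\J{\Nf}^\top$ across $Y_U$ through the symplectic constraint in the form $\J{\Nf}^\top Y_U=Y_U\J{\Nr}^\top$, then moving $\J{\Nr}$ through $S^{-1}$ and using $\J{\Nr}^\top C\J{\Nr}=S-C$ together with the cyclicity of the trace; this recombines the two contributions into $\tr\big((C+\J{\Nr}^\top C\J{\Nr})S^{-1}M^\top Y_U\big)=\tr(M^\top Y_U)$, which cancels exactly against the leading term $\tr(M^\top Y_U)$, so that $\tr(Z^\top g^\top Y_U)=0$.

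The main obstacle is this last cancellation. It succeeds only for the invertible symmetric matrix $S=Z^\top Z+\J{\Nr}^\top Z^\top Z\J{\Nr}$ arising in the full-rank condition \eqref{eq:fullRank} and in the proof of Proposition~\ref{prop:TM} — the combination for which both $\J{\Nr}^\top C\J{\Nr}=S-C$ and $S\J{\Nr}=\J{\Nr}S$ hold — so I would take care to use exactly this $S$ and to track faithfully every transpose of $\J{\Nf}$ and $\J{\Nr}$ when moving them across $Y_U$ and through $S^{-1}$. Everything else reduces to routine linear algebra on the orthosymplectic identities $U^\top U=\Idm_{\Nr}$, $\J{\Nf}U=U\J{\Nr}$, and $\J{\Nf}^2=-\Idm_{\Nf}$.
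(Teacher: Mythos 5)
Your proposal is correct, and the key orthogonality argument is organized differently from the paper's. The first step (verifying $(X_U,X_Z)\in H_{(U,Z)}$ so that $\Pi w$ lands in the tangent space) coincides with the paper's. For the orthogonality itself, the paper stays entirely within the symplectic formalism: it fixes $p=1$, writes $Y_U=\frac12(Y_U+\J{\Nf}^\top Y_U\J{\Nr})$, and expands $\omega$ column by column into two sums $T_1$, $T_2$ that are matched termwise. You instead observe that $\sum_j\omega_j(a,b)=\langle a,\J{\Nf}b\rangle$ and that left multiplication by $\J{\Nf}$ maps $\T{R}{\Mcal^{\spl}_{\Nr}}$ bijectively onto itself (via $\J{\Nf}U=U\J{\Nr}$ and the closure of $\TsM{U}$ under $\J{\Nf}$), reducing the claim to plain Frobenius orthogonality, which you then settle with a single trace computation using $S\J{\Nr}=\J{\Nr}S$ and $\J{\Nr}^\top C\J{\Nr}=S-C$ — I checked the cancellation $\tr\big((C+\J{\Nr}^\top C\J{\Nr})S^{-1}M^\top Y_U\big)=\tr(M^\top Y_U)$ and it is exact. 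The two routes rest on the same identities, but yours is more compact, avoids the columnwise bookkeeping, and makes the Frobenius-orthogonality of the projection (the content of the paper's remark following the proposition) the engine of the proof rather than a corollary; the paper's version has the virtue of never leaving the symplectic pairing. Two small notes: you correctly use the symmetric $S=Z^\top Z+\J{\Nr}^\top Z^\top Z\J{\Nr}$ (the statement's $\J{\Nr}Z^\top Z\J{\Nr}$ is a sign typo relative to the full-rank condition \eqref{eq:fullRank} and to the paper's own proof), and you do not separately verify idempotency as the paper does — but that follows automatically from the range property together with the orthogonality of the residual, so nothing is missing.
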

\begin{proof}
Let $X_U(w):=(\Idm_{\Nf}-UU^\top)(wZ + \J{\Nf}wZ\J{\Nr}^\top)(Z^\top Z+\J{\Nr}Z^\top Z\J{\Nr}^\top)^{-1}$
and $X_Z(w) = w^\top U$.
Using a reasoning analogous to the one in the proof of Proposition \ref{prop:TM}, it can be shown that
$(X_U,X_Z)\in H_{(U,Z)}$.
Moreover,
by means of the identification $T\T{R}{\Mcal^{\spl}_{\Nr}}\cong\T{R}{\Mcal^{\spl}_{\Nr}}$,
we prove that $\Pi:=\Pi_{\T{R}{\Mcal^{\spl}_{\Nr}}}$
is a projection.
It can be easily verified that $X_Z(\Pi w) = (\Pi w)^\top U= X_Z(w)$.
Furthermore, let
$F_w:=wZ + \J{\Nf}wZ\J{\Nr}^\top\in\R{\Nf}{\Nr}$, then
\begin{equation*}
\begin{aligned}
	X_U(\Pi w)&  = (\Idm_{\Nf}-UU^\top)
		\big((\Idm_{\Nf}-UU^\top)F_w S^{-1}Z^\top Z
		 +
		\J{\Nf}(\Idm_{\Nf}-UU^\top)F_w S^{-1}Z^\top Z\J{\Nr}^\top\big)S^{-1}\\
		& = X_U(w)Z^\top ZS^{-1} + \J{\Nf}X_U(w)Z^\top Z\J{\Nr}^\top S^{-1}.
\end{aligned}
\end{equation*}
Since $X_U(w)\J{\Nr} = \J{\Nf}X_U(w)$, it follows that
$X_U(\Pi w) 
= X_U(w)$.

Assume we have fixed a parameter $\prm_j\in\Sprm$ so that $p=1$.
Let $v:=w_j\in\r{\Nf}$ be the $j$-th column of the matrix $w\in\R{\Nf}{p}$ and, hence, $\Pi v\in\r{\Nf}$.
We want to show that $\omega(v-\Pi v, y) = 0$ for all $y\in\T{R}{\Mcal^{\spl}_{\Nr}}$.
By the characterization of the tangent space from Proposition \ref{prop:TM},
any $y\in\T{R}{\Mcal^{\spl}_{\Nr}}$ is of the form
$y=Y_U Z^\top + U Y_Z^\top$ where $Y_Z\in\R{1}{\Nr}$ and $Y_U\in \TsM{U}$.
Therefore,
\begin{equation*}
	\omega(v-\Pi v, y) = \omega(v-\Pi v,Y_UZ^\top)
		 + \omega(v, U Y_Z^\top) - \omega(X_U Z^\top + U X_Z^\top, U Y_Z^\top),
\end{equation*}
where $X_U=X_U(v)$ and $X_Z=X_Z(v)$, but henceforth we omit the dependence on $v$.
Using the definition of $X_Z$ and the symplecticity of the basis $U$ the last term becomes
\begin{equation*}
\begin{aligned}
	\omega(U X_Z^\top, U Y_Z^\top) & = \omega(UU^\top v,U Y_Z^\top)
		= \omega(v,\J{\Nf}^\top U \J{\Nr}U^\top U Y_Z^\top)\\
		& = \omega(v,\J{\Nf}^\top U\J{\Nr} Y_Z^\top)
		= \omega(v, U Y_Z^\top).
\end{aligned}
\end{equation*}
Moreover, it can be easily checked that $\omega(X_U Z^\top, U Y_Z^\top)=0$ by definition of $X_U$
and by the orthosymplecticity of $U$.
Hence, the only non-trivial terms are $\omega(v-\Pi v, y) = \omega(v ,Y_UZ^\top) - \omega(\Pi v ,Y_UZ^\top)$.
Any $Y_U\in \TsM{U}$ can be written as $Y_U = \frac12 (Y_U + \J{\Nf}^\top Y_U\J{\Nr})$; thereby
\begin{equation*}
\begin{aligned}
	\omega(v-\Pi v, 2y)  
		= \, \omega(v,Y_UZ^\top + \J{\Nf}^\top Y_U\J{\Nr}Z^\top)
		  -\omega(X_U Z^\top + U X_Z^\top,Y_UZ^\top + \J{\Nf}^\top Y_U\J{\Nr}Z^\top)
		 =: T_1 - T_2.
\end{aligned}
\end{equation*}
We need to prove that $T_1$ and $T_2$ coincide.
Let $M_i\in\r{\Nf}$ denote the $i$-th column vector of a given matrix $M\in\R{\Nf}{\Nr}$.
The properties of the symplectic canonical form $\omega$ yield
\begin{equation*}
\begin{aligned}
	T_1 & = \omega\bigg(v, \sum_{i=1}^{\Nr} (Y_U)_i Z_i\bigg) +
			\omega\bigg(\J{\Nf}v, \sum_{i=1}^{\Nr} (Y_U)_i (\J{\Nr}Z^\top)_i\bigg)\\
		& = \sum_{i=1}^{\Nr} \omega\big(v,(Y_U)_i\big)Z_i + \sum_{i=1}^{\Nr} \omega\big(\J{\Nf}v,(Y_U)_i\big)(\J{\Nr}Z^\top)_i
		 = \sum_{i=1}^{\Nr} \omega\big(v Z_i + \J{\Nf}v(Z\J{\Nr}^\top)_i,(Y_U)_i\big).
\end{aligned}
\end{equation*}
To deal with the term $T_2$ first observe that $\omega(U X_Z^\top,Y_U Z^\top) = 0$ since $Y_U^\top U = 0$.
Moreover, using once more the fact that $Y_U\in \TsM{U}$ results in
\begin{equation*}
\begin{aligned}
	T_2 & = \omega\big(X_U Z^\top, Y_U Z^\top\big) + \omega\big(X_U\J{\Nr} Z^\top, Y_U\J{\Nr} Z^\top\big)\\
		&  = \sum_{i,j=1}^{\Nr} \omega\big((X_U)_j Z_j, (Y_U)_i Z_i\big) + \omega\big((X_U)_j (\J{\Nr}Z^\top)_j, (Y_U)_i (\J{\Nr}Z^\top)_i\big)\\
		&  = \sum_{i,j=1}^{\Nr} \omega\big((X_U)_j, (Y_U)_i\big)\big(Z_j Z_i +  (\J{\Nr}Z^\top)_j (Z\J{\Nr}^\top)_i\big).
\end{aligned}
\end{equation*}
The result follows by definition of $X_U(v)$.
\end{proof}
\bl{\begin{remark}
Owing to the inner product structure \eqref{eq:inprod}, the projection operator from Proposition~\ref{prop:SymplProj} is orthogonal in the Frobenius norm since
\begin{equation*}
\sum_{j=1}^p \omega_j\big(w-\Pi_{\T{R}{\Mcal^{\spl}_{\Nr}}}w,y\big) = 
	\langle w-\Pi_{\T{R}{\Mcal^{\spl}_{\Nr}}}w,\J{\Nf}y\rangle = 0,\qquad \forall\, y\in\T{R}{\Mcal^{\spl}_{\Nr}}.
\end{equation*}
This means that the projection gives the best low-rank approximation of the velocity vector,
and hence the reduced dynamics is associated with the flow field ensuing from the best approximation
in the tangent space to the reduced manifold.
\end{remark}}
To compute the initial condition of the reduced problem, we perform the
complex SVD of $\Rcal_0(\prm_h)\in\R{\Nf}{\Np}$ truncated at the $\Nrh$-th mode. Then the
initial value $U_0\in\Mcal$ is obtained from the resulting unitary matrix of left singular vectors of 
$\Rcal_0(\prm_h)$ by exploiting the isomorphism between $\Mcal$ and $\St(\Nrh,\c{\Nfh})$,
\emph{cf.} Lemma \ref{lem:isoM}.
The expansion coefficients matrix is initialized as $Z_0 = \Rcal_0(\prm_h)^\top U_0$.
Therefore, the dynamical system for the approximate reduced solution \eqref{eq:Rrb}
reads: Find $R\in C^1(\Tcal,\Mcal^{\spl}_{\Nr})$ such that
\begin{equation}\label{eq:dynn}
	\left\{
	\begin{array}{ll}
		\dot{R}(t) = \Pi_{\T{R}{\Mcal^{\spl}_{\Nr}}}\Xcal_{\HamN}(R(t),\prm_h),&\quad\quad\mbox{for }\; t\in\Tcal,\\
		R(t_0) = U_0Z_0^\top. &
	\end{array}\right.	
\end{equation}
For any $1\leq j\leq \Np$ and $t\in\Tcal$,
let $Z_j(t)\in\R{1}{\Nr}$ be the $j$-th row of the matrix $Z(t)\in V^{\Np\times\Nr}$, 
and let $Y(t):=[Y_1|\ldots|Y_p]\in\R{\Nf}{\Np}$
where $Y_j := \nabla_{UZ_j^\top}\HamN(UZ_j^\top,\prm_j)\in\R{\Nf}{1}$,
and $\nabla_{UZ_j^\top}$ denotes the gradient with respect to $UZ_j^\top$.
Using the decomposition $R=UZ^\top$ in \eqref{eq:SRM}, we can now derive from \eqref{eq:dynn} evolution equations for $U$ and $Z$:
Given $\Rcal_0(\prm_h)\in\R{\Nf}{\Np}$,
find $(U,Z)\in C^1(\Tcal,\Mcal)\times C^1(\Tcal,V^{\Np\times\Nr})$ such that
\begin{equation}\label{eq:eqU}
\left\{
\begin{array}{ll}
	\dot{Z}_j(t) = \J{\Nr}\nabla_{Z_j} \HamN(UZ_j^\top,\prm_j), & \quad t\in\Tcal,\; 1\leq j\leq p,\\
	\dot{U}(t) = (\Idm_{\Nf}-UU^\top)(\J{\Nf}YZ - YZ\J{\Nr}^\top)S^{-1}, &\quad t\in\Tcal,\\
	U(t_0)Z(t_0)^\top = U_0Z_0^\top. &
\end{array}
\right.
\end{equation}
The reduced problem \eqref{eq:eqU} is analogous to the system derived in \cite[Proposition 6.9]{MN17}.
The evolution equations for the coefficients $Z$
form a system of
$p$ equations in $\Nr$ unknowns and correspond to the Galerkin projection
onto the space spanned by the columns of $U$, as obtained with a standard reduced basis method.
Here, however, the projection is changing over time
as the reduced basis $U$ is evolving.
For $U$ fixed, the flow map characterizing the evolution of each $Z_j$, for $1\leq j\leq p$, is a symplectomorphism
(\emph{cf.} Definition \ref{def:SymplecticMap}),
i.e. the dynamics is canonically Hamiltonian.
The evolution problem satisfied by the basis $U$ is
a matrix equation in $\Nf\times\Nr$ unknowns
on the manifold of orthosymplectic rectangular matrices introduced in Definition \ref{def:U},
as shown in the following result.
\begin{proposition}\label{prop:OSflow}
If $U(t_0)\in\Mcal$ then $U(t)\in\R{\Nf}{\Nr}$ solution of
\eqref{eq:eqU} satisfies $U(t)\in \Mcal$ for all $t\in\Tcal$.
\end{proposition}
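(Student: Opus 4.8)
The plan is to show that the velocity field governing the evolution of $U$ in \eqref{eq:eqU} is, at every point of $\Mcal$, tangent to $\Mcal$, and then to invoke the standard principle that a submanifold along which a vector field is tangent is invariant under the corresponding flow. Writing the right-hand side of the second equation in \eqref{eq:eqU} as $\dU = X_U$ with $X_U := (\Idm_{\Nf}-UU^\top)K\,S^{-1}$ and $K := \J{\Nf}YZ - YZ\J{\Nr}^\top$, it suffices to prove that $X_U\in\TsM{U}$ for every $U\in\Mcal$, with $\TsM{U}$ the space \eqref{eq:HU}. Indeed, the two defining conditions of \eqref{eq:HU} imply tangency to both constraints cutting out $\Mcal$: $X_U^\top U=0$ gives $X_U^\top U+U^\top X_U=0$, and $X_U\J{\Nr}=\J{\Nf}X_U$ yields $X_U^\top\J{\Nf}=\J{\Nr}X_U^\top$, whence $X_U^\top\J{\Nf}U+U^\top\J{\Nf}X_U=0$. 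Thus $\TsM{U}\subseteq\TM{U}$, and membership of $\dU$ in $\TsM{U}$ is enough.

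The first condition, $X_U^\top U=0$, is immediate: the factor $\Idm_{\Nf}-UU^\top$ is the orthogonal projector onto the complement of the range of $U$, so $(\Idm_{\Nf}-UU^\top)U=U-U(U^\top U)=0$ by the orthogonality $U^\top U=\Idm_{\Nr}$, and transposing gives $X_U^\top U=0$. The second condition, $X_U\J{\Nr}=\J{\Nf}X_U$, rests on three commutation facts that I would establish in turn: (i) $S$ commutes with $\J{\Nr}$ — already used in the proof of Proposition~\ref{prop:TM} — and hence so does $S^{-1}$; (ii) the data matrix intertwines the two symplectic units, $K\J{\Nr}=\J{\Nf}K$, which follows from $\J{\Nf}^2=-\Idm_{\Nf}$, $\J{\Nr}^2=-\Idm_{\Nr}$ and $\J{\Nr}^\top=-\J{\Nr}$ by a one-line manipulation of $K=\J{\Nf}YZ-YZ\J{\Nr}^\top$; and (iii) the projector commutes with $\J{\Nf}$, namely $(\Idm_{\Nf}-UU^\top)\J{\Nf}=\J{\Nf}(\Idm_{\Nf}-UU^\top)$. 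Granting these, the chain $X_U\J{\Nr}=(\Idm_{\Nf}-UU^\top)K\J{\Nr}S^{-1}=(\Idm_{\Nf}-UU^\top)\J{\Nf}K\,S^{-1}=\J{\Nf}(\Idm_{\Nf}-UU^\top)K\,S^{-1}=\J{\Nf}X_U$ closes the argument, first pushing $\J{\Nr}$ through $S^{-1}$, then replacing $K\J{\Nr}$ by $\J{\Nf}K$, and finally commuting $\J{\Nf}$ past the projector.

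I expect fact (iii) to be the main obstacle, and it is precisely where the orthosymplectic nature of $U$ — as opposed to mere orthogonality or mere symplecticity — is essential. The cleanest route is to use the representation $U=[A\,|\,\J{\Nf}^\top A]$ with $A^\top A=\Idm_{\Nrh}$ and $A^\top\J{\Nf}A=0_{\Nrh}$ supplied by Lemma~\ref{lem:MM+}, which yields $UU^\top=AA^\top+\J{\Nf}^\top AA^\top\J{\Nf}$; using the elementary identities $\J{\Nf}^\top=-\J{\Nf}$ and $\J{\Nf}^2=-\Idm_{\Nf}$ one checks directly that $\J{\Nf}UU^\top=UU^\top\J{\Nf}$, i.e. that the projector onto $\mathrm{range}\,U$ commutes with $\J{\Nf}$ — the same identity already invoked in the proof of Proposition~\ref{prop:TM}. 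With the tangency $X_U\in\TsM{U}\subseteq\TM{U}$ in hand for all $U\in\Mcal$ (note that $K$ retains its intertwining form irrespective of the dependence of $Y$ on $U$), and observing that $\Mcal=\Un(\Nr,\r{\Nf})$ is closed and bounded, hence compact, the flow of \eqref{eq:eqU} restricted to $\Mcal$ exists for all $t\in\Tcal$ and, by uniqueness of solutions of the ordinary differential equation, coincides with the ambient solution. Consequently $U(t_0)\in\Mcal$ forces $U(t)\in\Mcal$ for every $t\in\Tcal$, which is the assertion. Equivalently, the conclusion may be phrased through the invariants $U^\top U-\Idm_{\Nr}$ and $U^\top\J{\Nf}U-\J{\Nr}$, whose time derivatives along the flow vanish by exactly the two tangency identities established above.
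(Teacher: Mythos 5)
Your proposal is correct and follows essentially the same route as the paper: you verify that the velocity field $(\Idm_{\Nf}-UU^\top)(\J{\Nf}YZ-YZ\J{\Nr}^\top)S^{-1}$ lies in $\TsM{U}$ (dynamical orthogonality plus the intertwining $X_U\J{\Nr}=\J{\Nf}X_U$, via the commutation of $S$ with $\J{\Nr}$, of the data matrix with the symplectic units, and of the projector $\Idm_{\Nf}-UU^\top$ with $\J{\Nf}$), and then conclude that the constraints $U^\top U=\Idm_{\Nr}$ and $U^\top\J{\Nf}U=\J{\Nr}$ are propagated, exactly as in the paper's two-step argument. The only cosmetic differences are that the paper manipulates $\dot{U}S\J{\Nr}^\top=\J{\Nf}^\top\dot{U}S$ instead of pushing $\J{\Nr}$ through $S^{-1}$, and that you spell out the commutation $UU^\top\J{\Nf}=\J{\Nf}UU^\top$ via Lemma~\ref{lem:MM+}, which the paper takes for granted.
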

\begin{proof}
We first show that, for any matrix $W(t)\in\R{\Nf}{\Nr}$, if $W(t_0)\in\Mcal$
and $\dot{W}\in \TsM{W}$, with $\TsM{W}$ defined in \eqref{eq:HU}, then $W(t)\in\Mcal$ for any $t>t_0$.
The condition $\dot{W}^\top W=0$ implies
$d_t(W^\top(t) W(t)) = \dot{W}^\top W + W^\top\dot{W} = 0$,
hence $W^\top(t) W(t)=W^\top(t_0) W(t_0)=\Idm_{\Nr}$ by the assumption on the initial condition.
Moreover, the condition $\dot{W}=\J{\Nf}^\top \dot{W}\J{\Nr}$ together with the dynamical orthogonality $\dot{W}^\top W=0$ results in
$d_t(W^\top(t)\J{\Nf} W(t)) = \dot{W}^\top\J{\Nf} W + W^\top\J{\Nf}\dot{W}
	= \J{\Nr}^\top\dot{W}^\top W + W^\top\dot{W}\J{\Nr}^\top = 0$.
Hence, the symplectic constraint on the initial condition yields
$W^\top(t)\J{\Nf} W(t)=W^\top(t_0)\J{\Nf} W(t_0)=\J{\Nr}$.

Owing to the reasoning above, we only need to verify that the solution of \eqref{eq:eqU} satisfies
$\dot{U}\in \TsM{U}$.
The dynamical orthogonal condition $\dot{U}^\top U=0$ is trivially satisfied.
Moreover,
since $S\J{\Nr}=\J{\Nr}S$,
the constraint $\dot{U}=\J{\Nf}^\top \dot{U}\J{\Nr}$ is satisfied if
$\dot{U}S\J{\Nr}^\top= \J{\Nf}^\top\dot{U}S$.
One can easily show that
$A:= \J{\Nf}YZ- YZ\J{\Nr}^\top = \J{\Nf}A\J{\Nr}^\top$.
Therefore, $	\dot{U}S\J{\Nr}^\top = (\Idm_{\Nf}-UU^\top)A\J{\Nr}^\top
= \J{\Nf}^\top (\Idm_{\Nf}-UU^\top)\J{\Nf}A\J{\Nr}^\top = \J{\Nf}^\top\dot{U} S$.
\end{proof}
\begin{remark}
Observe that the dynamical reduced basis technique proposed in the previous Section can be
extended to more general
Hamiltonian systems endowed with
a degenerate constant Poisson structure.
The idea is to proceed as in \cite[Section 3]{HP18} by splitting the dynamics
into the evolution on a symplectic submanifold of the phase space and the trivial evolution of the Casimir invariants.
The symplectic dynamical model order reduction
developed in Section \ref{sec:SDLR} can then be performed on the symplectic component of the dynamics.
\end{remark}

\subsection{Conservation properties of the reduced dynamics}
The velocity field of the reduced flow \eqref{eq:dynn}
is the symplectic projection of the full model velocity onto the tangent space
of the reduced manifold.
For any fixed parameter $\prm_j\in\Sprmh$, 
let $\HamN_j:=\HamN(\cdot,\prm_j)$.
In view of Proposition~\ref{prop:SymplProj}, the reduced solution
$R\in C^1(\Tcal,\Mcal^{\spl}_{\Nr})$
satisfies the symplectic variational principle
\begin{equation*}
	\sum_{j=1}^p \omega_j\big(\dot{R}-\J{\Nf}\nabla\HamN_j(R),y\big) = 0,\qquad \forall\, y\in\T{R}{\Mcal^{\spl}_{\Nr}}.
\end{equation*}
This implies that
the Hamiltonian $\HamN$ is a conserved quantity of the continuous reduced problem
\eqref{eq:eqU}. Indeed,
\begin{equation*}
	\sum_{j=1}^p\dfrac{d}{dt}\HamN_j(R(t))  = \sum_{j=1}^p(\nabla_{R_j}\HamN_j(R),\dot{R}_j) 
		= \sum_{j=1}^p\omega(\J{\Nf}\nabla_{R_j}\HamN_j(R),\dot{R}_j)
		= \sum_{j=1}^p\omega_j(\dot{R},\dot{R}) = 0.
\end{equation*}
Therefore, if $\Rcal_0(\prm_h)\in\spn{U_0}$ then the Hamiltonian is preserved,
\begin{equation*}
	\sum_{j=1}^p\big(\HamN_j(\Rcal(t))-\HamN_j(R(t))\big) =
	\sum_{j=1}^p\big(\HamN_j(\Rcal_0)-\HamN_j(R(t_0))\big)
	 =	\sum_{j=1}^p\big(\HamN_j(\Rcal_0)-\HamN_j(U_0U_0^\top \Rcal_0)\big).
\end{equation*}
To deal with the other invariants of motion,
let us assume for simplicity that $p=1$.
Since the linear map $\r{\Nf}\rightarrow \spn{U(t)}$ associated with the reduced basis 
at any time $t\in\Tcal$ cannot be symplectic, the invariants of motion of the
full and reduced model cannot be in one-to-one correspondence.
Nevertheless, a result analogous to \cite[Lemma 3.9]{HP18} holds.
\begin{lemma}
Let $\pi_{+,t}^\ast$ be the pullback of the linear map associated with the reduced basis $U^\top(t)$
at time $t\in\Tcal$.
Assume that $\HamN\in\Im{\pi_{+,t}^\ast}$ for any $t\in\Tcal$.
Then, $\Ical(t)\in C^\infty(\r{\Nr})$ is an invariant of $\Phi^t_{X_{\pi_{+,t}^\ast \HamN}}$
if and only if $(\pi_{+,t}^\ast \Ical)(t)\in C^\infty(\r{\Nf})$ is an invariant of $\Phi^t_{X_{\HamN}}$ 
in $\Im{\pi_{+,t}^\ast}$.
\end{lemma}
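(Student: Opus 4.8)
The plan is to reduce the claim to two elementary facts: that a symplectic map intertwines Poisson brackets, and that the pullback along a surjective linear map is injective. Fix $t\in\Tcal$ and freeze the reduced basis $U=U(t)\in\Mcal$; let $\pi_{+,t}^\ast$ denote precomposition with $U^\top$, so that $\pi_{+,t}^\ast f = f\circ U^\top$ for $f\in C^\infty(\r{\Nr})$, and write $\hat{\HamN}:=\HamN\circ U\in C^\infty(\r{\Nr})$ for the reduced Hamiltonian (the object the statement denotes $\pi_{+,t}^\ast\HamN$) that generates $\Phi^t_{X_{\pi_{+,t}^\ast\HamN}}$. With this notation the hypothesis $\HamN\in\Im{\pi_{+,t}^\ast}$ means exactly $\HamN=\pi_{+,t}^\ast\hat{\HamN}$, i.e.\ the full Hamiltonian is the $U^\top$-pullback of its own reduction.

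First I would record the two structural ingredients. Because $U$ is orthosymplectic, \eqref{eq:ManUZ} gives $U^\top\J{\Nf}U=\J{\Nr}$, so by the characterization of symplectic linear maps the map $u\mapsto U^\top u$ is symplectic in the sense of Definition~\ref{def:SymplecticMap}; its pullback therefore intertwines the canonical brackets,
\[
	\br{\pi_{+,t}^\ast\Fcal}{\pi_{+,t}^\ast\Gcal}{\Nf}=\pi_{+,t}^\ast\br{\Fcal}{\Gcal}{\Nr},\qquad\forall\,\Fcal,\Gcal\in C^\infty(\r{\Nr}).
\]
Second, $U^\top U=\Idm_{\Nr}$ shows that $U^\top$ is surjective with right inverse $U$, whence $\pi_{+,t}^\ast g = g\circ U^\top$ vanishes identically only if $g\equiv0$; that is, $\pi_{+,t}^\ast$ is injective on $C^\infty(\r{\Nr})$.

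Next I would rewrite both invariance statements in bracket form, using the characterization of invariants in Definition~\ref{def:inv}: $\Ical$ is an invariant of the reduced flow iff $\br{\Ical}{\hat{\HamN}}{\Nr}=0$, while $\pi_{+,t}^\ast\Ical$ is an invariant of $\Phi^t_{X_{\HamN}}$ iff $\br{\pi_{+,t}^\ast\Ical}{\HamN}{\Nf}=0$. Substituting $\HamN=\pi_{+,t}^\ast\hat{\HamN}$ and applying the bracket identity with $\Fcal=\Ical$, $\Gcal=\hat{\HamN}$ yields the key relation $\br{\pi_{+,t}^\ast\Ical}{\HamN}{\Nf}=\pi_{+,t}^\ast\br{\Ical}{\hat{\HamN}}{\Nr}$. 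The forward implication is then immediate, since the right-hand side vanishes whenever $\br{\Ical}{\hat{\HamN}}{\Nr}=0$; for the converse I would invoke injectivity of $\pi_{+,t}^\ast$ to pass from $\pi_{+,t}^\ast\br{\Ical}{\hat{\HamN}}{\Nr}=0$ back to $\br{\Ical}{\hat{\HamN}}{\Nr}=0$.

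I expect the only delicate point to be the bookkeeping of the two pullback directions concealed behind the single symbol $\pi_{+,t}^\ast$ (reduction of $\HamN$ versus lifting of $\Ical$), and the precise meaning of the qualifier ``in $\Im{\pi_{+,t}^\ast}$'': the correspondence lives on $\spn{U}=\Im{\pi_{+,t}}$, and both the bracket identity and the injectivity should be read as statements about functions in $\Im{\pi_{+,t}^\ast}$. Once this is made explicit, the remaining steps are the routine substitutions above, so no serious computation is expected.
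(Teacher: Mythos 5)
The paper does not actually supply a proof of this lemma --- it only asserts that ``a result analogous to [HP18, Lemma 3.9] holds'' --- so there is no in-paper argument to compare against; judged on its own, your proof is correct and is the natural argument the citation is standing in for. The two ingredients you isolate are exactly right and are both available in the paper: $U^\top\J{\Nf}U=\J{\Nr}$ makes $u\mapsto U^\top u$ symplectic in the sense of Definition~\ref{def:SymplecticMap} (by the unnumbered lemma in Section 3), hence $\br{\pi_{+,t}^\ast\Fcal}{\pi_{+,t}^\ast\Gcal}{\Nf}=\pi_{+,t}^\ast\br{\Fcal}{\Gcal}{\Nr}$, while $U^\top U=\Idm_{\Nr}$ gives injectivity of $\pi_{+,t}^\ast$; combined with the bracket characterization of invariance from Definition~\ref{def:inv}, the equivalence follows. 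You also correctly flag and resolve the two genuine looseness points in the statement: the overloaded symbol $\pi_{+,t}^\ast$ (whose application to $\HamN$ must be read as the reduced Hamiltonian $\HamN\circ U$, recovered from the hypothesis $\HamN\in\Im{\pi_{+,t}^\ast}$ via $U^\top U=\Idm_{\Nr}$) and the qualifier ``in $\Im{\pi_{+,t}^\ast}$'', which, as your identity shows, is immaterial since $U^\top$ already maps $\spn{U}$ onto all of $\r{\Nr}$. The only thing I would make explicit is that $t$ is frozen throughout, so invariance is understood purely through the bracket as in Definition~\ref{def:inv} rather than through the total time derivative of a time-dependent $\Ical(t)$; with that reading your argument is complete.
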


\subsection{Convergence estimates with respect to the best low-rank approximation}
In order to derive error estimates for the reduced solution
of problem \eqref{eq:dynn}, we extend to our setting
the error analysis of \cite[Section 5]{FL18}
which shows that the error committed by the
dynamical approximation with respect to the best low-rank approximation
is bounded by the projection error of the full model solution onto
the reduced manifold of low-rank matrices.
To this aim, we resort to the isomorphism between
the reduced symplectic manifold $\Mcal^{\spl}_{\Nr}$ defined in \eqref{eq:SRM}
and the manifold $\Mcal_{\Nrh}$ of rank-$\Nrh$ complex matrices, already established in
\cite[Lemma 6.1]{MN17}.
Then, we derive the dynamical orthogonal approximation of the resulting problem
in the complex setting and prove that it is isomorphic to the solution of the reduced Hamiltonian system
\eqref{eq:dynn}. The differentiability properties of orthogonal projections onto smooth
embedded manifolds and the trivial extension to complex matrices of the curvature bounds
in \cite{FL18} allows to derive an error estimate.

Let $\mathfrak{L}(\Omega)$ denote the set of functions with values in the vector space $\Omega$,
and let $\mathfrak{F}:\mathfrak{L}(\R{\Nf}{\Np})\rightarrow\mathfrak{L}(\C{\Nfh}{\Np})$ be the isomorphism
\begin{equation}\label{eq:Frak}
	R(\cdot)=
\begin{pmatrix}
R_q(\cdot)\\[-0.4em]
R_p(\cdot)
\end{pmatrix}	
		\;\longmapsto\;
			\mathfrak{F}(R)(\cdot)=R_q(\cdot)+iR_p(\cdot).
\end{equation}
Then, problem \eqref{eq:dynNR} can be recast in the complex setting as:
For $\Rcal_0(\prm_h)\in\R{\Nf}{\Np}$,
find $\Ccal\in C^1(\Tcal,\C{\Nfh}{\Np})$ such that
\begin{equation}\label{eq:dynNRcomplex}
	\left\{
	\begin{array}{ll}
		\dot{\Ccal}(t) = \mathfrak{F}(\Xcal_{\HamN})(\Ccal(t),\prm_h)
		=:\widehat{\Xcal}_{\HamN}(\Ccal(t),\prm_h),&\quad\quad\mbox{for }\; t\in\Tcal,\\
		\Ccal(t_0) = \mathfrak{F}(\Rcal_0)(\prm_h). &
	\end{array}\right.	
\end{equation}
Similarly to dynamically orthogonal approximations 
we consider the manifold of rank-$\Nrh$ complex matrices
$\Mcal_{\Nrh}:=\{C\in\C{\Nfh}{\Np}:\;\rank{C}=\Nrh\}$.
Any $C\in\Mcal_{\Nrh}$
can be decomposed, up to unitary $\Nrh\times\Nrh$ transformations, as
$C=WY^\top$ where $W\in\St(\Nrh,\c{\Nfh})=\{M\in\C{\Nfh}{\Nrh}:\;M^\hop M=\Idm_{\Nrh}\}$,
and $Y\in \Vcal^{\Np\times\Nrh}:=\{M\in\C{\Np}{\Nrh}:\;\rank{M}=\Nrh \}$.
Analogously to \cite[Lemma 6.1]{MN17} one can establish the following result.
\begin{lemma}\label{lem:isoM}
	The manifolds $\Mcal_{\Nrh}$ and $\Mcal^{\spl}_{\Nr}$ 
	are isomorphic via the map
	\begin{equation}\label{eq:isoM}
		(U,Z)\in\Mcal \times V^{\Np\times\Nr}
			\;\longmapsto\;
			(\mathfrak{F}(A),\mathfrak{F}(Z^\top)^\top)\in \St(\Nrh,\c{\Nfh})\times\Vcal^{\Np\times\Nrh},
	\end{equation}
	where $\mathfrak{F}$ is defined in \eqref{eq:Frak} and $A\in\R{\Nf}{\Nrh}$ is such that $U=[A\,|\,\J{\Nf}^\top A]$
	in view of Lemma \ref{lem:MM+}.
\end{lemma}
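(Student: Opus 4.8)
The plan is to show that the real-linear isomorphism $\mathfrak{F}$ of \eqref{eq:Frak} carries $\Mcal^{\spl}_{\Nr}$ bijectively onto $\Mcal_{\Nrh}$, and that it realises the factorwise correspondence \eqref{eq:isoM}. I would treat the two factors separately and then check compatibility with the products $UZ^\top$ and $WY^\top$. For the basis factor, fix $U\in\Mcal$ and use Lemma \ref{lem:MM+} to write $U=[A\,|\,\J{\Nf}^\top A]$ with $A^\top A=\Idm_{\Nrh}$ and $A^\top\J{\Nf}A=\Zrm_{\Nrh}$. Splitting $A$ into its $q$- and $p$-blocks $A_q,A_p\in\R{\Nfh}{\Nrh}$ and setting $W:=\mathfrak{F}(A)=A_q+iA_p$, a direct computation gives
\[
	W^\hop W=(A_q^\top A_q+A_p^\top A_p)+i\,(A_q^\top A_p-A_p^\top A_q),
\]
whose real part is $A^\top A$ and whose imaginary part is $A^\top\J{\Nf}A$. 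Hence the two real constraints on $A$ are exactly equivalent to $W^\hop W=\Idm_{\Nrh}$, i.e.\ to $W\in\St(\Nrh,\c{\Nfh})$. Reading these equalities backwards reconstructs a unique admissible $A$ from any $W$, so $U\mapsto\mathfrak{F}(A)$ is a bijection between $\Mcal$ and the complex Stiefel manifold.

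For the coefficient factor, write $Z=[Z_1\,|\,Z_2]$ with $Z_1,Z_2\in\R{\Np}{\Nrh}$ and set $Y:=\mathfrak{F}(Z^\top)^\top=Z_1+iZ_2$. The crucial step is the block identity
\[
	Z^\top Z+\J{\Nr}^\top Z^\top Z\J{\Nr}
	=\begin{pmatrix} Z_1^\top Z_1+Z_2^\top Z_2 & Z_1^\top Z_2-Z_2^\top Z_1\\[0.3em] Z_2^\top Z_1-Z_1^\top Z_2 & Z_1^\top Z_1+Z_2^\top Z_2\end{pmatrix},
\]
which, under the real-linear identification $\c{\Nrh}\cong\r{\Nr}$, is the realification of the Hermitian matrix $Y^\hop Y$ (up to complex conjugation). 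Since the realification of an $\Nrh\times\Nrh$ complex matrix has real rank equal to twice its complex rank, and $\rank{Y^\hop Y}=\rank{Y}$, the defining condition $\rank{Z^\top Z+\J{\Nr}^\top Z^\top Z\J{\Nr}}=\Nr$ of $V^{\Np\times\Nr}$ in \eqref{eq:ManUZ} is equivalent to $\rank{Y}=\Nrh$, i.e.\ to $Y\in\Vcal^{\Np\times\Nrh}$. Thus $Z\mapsto\mathfrak{F}(Z^\top)^\top$ is a bijection $V^{\Np\times\Nr}\to\Vcal^{\Np\times\Nrh}$, and \eqref{eq:isoM} is a bijection of the two product spaces.

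To descend to the base manifolds I would check compatibility with the factorization maps: expanding $UZ^\top=AZ_1^\top+\J{\Nf}^\top A Z_2^\top$ in $q/p$-blocks and comparing with $(A_q+iA_p)(Z_1^\top+iZ_2^\top)$ yields $\mathfrak{F}(UZ^\top)=WY^\top=\mathfrak{F}(A)\,\mathfrak{F}(Z^\top)$. Consequently $\mathfrak{F}$ sends $R=UZ^\top\in\Mcal^{\spl}_{\Nr}$ to $WY^\top\in\Mcal_{\Nrh}$; conversely, any $C\in\Mcal_{\Nrh}$ admits, up to the $\Un(\Nrh)$ gauge, a factorization $C=WY^\top$ with $W\in\St(\Nrh,\c{\Nfh})$, $Y\in\Vcal^{\Np\times\Nrh}$, whose preimage under \eqref{eq:isoM} gives an $R\in\Mcal^{\spl}_{\Nr}$ with $\mathfrak{F}(R)=C$. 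As $\mathfrak{F}$ is a linear isomorphism of the ambient real spaces, its restriction is a diffeomorphism from $\Mcal^{\spl}_{\Nr}$ onto $\Mcal_{\Nrh}$; one moreover verifies that $\langle R,R'\rangle$ equals the real part of $\tr\big(\mathfrak{F}(R)^\hop\mathfrak{F}(R')\big)$, so the identification is isometric for the Frobenius metric, which is what the later error analysis requires.

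I expect the main obstacle to be the rank step for the coefficient factor: one must recognise $Z^\top Z+\J{\Nr}^\top Z^\top Z\J{\Nr}$ as a realification and invoke the rank-doubling property together with $\rank{Y^\hop Y}=\rank{Y}$, rather than comparing the two ranks by hand. Everything else is bookkeeping with the $q/p$-block structure and the elementary identities $\J{\Nf}^2=-\Idm$ and $\J{\Nf}^\top=-\J{\Nf}$, while the non-uniqueness of both factorizations is handled automatically by working with $\mathfrak{F}$ on the ambient space, where it is manifestly well defined.
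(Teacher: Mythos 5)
Your proposal is correct. The paper does not actually write out a proof of this lemma --- it defers to the analogous result \cite[Lemma 6.1]{MN17} --- so your argument supplies exactly the details being delegated, and it is the natural route: the block computations $W^\hop W=A^\top A+i\,A^\top\J{\Nf}A$ (which converts the two real constraints defining $\Un(\Nr,\r{\Nf})$ into the single complex Stiefel constraint) and the identification of $Z^\top Z+\J{\Nr}^\top Z^\top Z\J{\Nr}$ with the realification of $Y^\top Y^\ast$ (so that the full-rank condition \eqref{eq:fullRank} becomes $\rank{Y}=\Nrh$) both check out, as does the multiplicativity $\mathfrak{F}(UZ^\top)=\mathfrak{F}(A)\,\mathfrak{F}(Z^\top)$, which is what makes the map well defined on the quotient despite the $\Un(\Nr)$ gauge freedom. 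The only point worth flagging is cosmetic: elements of $\Mcal^{\spl}_{\Nr}$ need not have full real rank $\Nr$ (as the paper itself remarks), so the surjectivity onto the rank-$\Nrh$ complex matrices rests on $\rank{WY^\top}=\rank{Y}=\Nrh$ for $W$ of full column rank, which your argument already uses implicitly.
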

For $C(t_0)\in \Mcal_{\Nrh}$
associated with $R(t_0)\in\Mcal^{\spl}_{\Nr}$ via the map \eqref{eq:isoM},
we can therefore derive the DO dynamical system:
find $C\in C^1(\Tcal,\Mcal_{\Nrh})$ such that
\begin{equation}\label{eq:dynncomplex}
		\dot{C}(t) = \Pi_{\T{C}{\Mcal_{\Nrh}}}\widehat{\Xcal}_{\HamN}(C(t),\prm_h),\quad\quad\mbox{for }\; t\in\Tcal,
\end{equation}
where $\Pi_{\T{C}{\Mcal_{\Nrh}}}$ is the projection onto the tangent space
of $\Mcal_{\Nrh}$ at $C=WY^\top$, defined as
\begin{equation*}
\begin{aligned}
	\T{C}{\Mcal_{\Nrh}} = \{X\in\C{\Nfh}{\Np}:\; & X=X_W Y^\top +W X_Y^\top\;\,
	\mbox{with}\;\,
		X_Y\in\C{p}{\Nrh},\\
		&\;X_W\in\C{\Nfh}{\Nrh},\,X_W^\hop W+W^\hop X_W=0\}.
\end{aligned}
\end{equation*}
The so-called dynamically orthogonal condition $X_W^\hop W=0$,
allows to uniquely parameterize the tangent space $\T{C}{\Mcal_{\Nrh}}$ by imposing
that the complex reduced basis evolves orthogonally to itself.

Let $M^\ast$ indicate the complex conjugate of a given matrix $M$. The projection
onto the tangent space of $\Mcal_{\Nrh}$ can be characterized as in the following result.
\begin{lemma}
At every $C=WY^\top\in\Mcal_{\Nrh}$, the map
\begin{equation}\label{eq:TmapC}
\begin{array}{lcll}
	\Pi_{\T{C}{\Mcal_{\Nrh}}}: & \C{\Nfh}{\Np} & \longrightarrow & \T{C}{\Mcal_{\Nrh}}\\
		  & w & \longmapsto & (\Idm_{\Nfh}-WW^\hop)w\,Y^\ast(Y^\top Y^\ast)^{-1}Y^\top+ WW^\hop w,
\end{array}
\end{equation}
is the $\norm{\cdot}$-orthogonal projection onto the tangent space of $\Mcal_{\Nrh}$ at $C$.
\end{lemma}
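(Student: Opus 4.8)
The plan is to exploit that the map $\Pi := \Pi_{\T{C}{\Mcal_{\Nrh}}}$ splits as $\Pi w = P^\perp w\,Q + P w$, where $P := WW^\hop$ and $P^\perp := \Idm_{\Nfh}-WW^\hop$ are the $\hop$-orthogonal projectors onto the column space of $W$ and its complement (these are Hermitian idempotents because $W^\hop W=\Idm_{\Nrh}$), and $Q := Y^\ast(Y^\top Y^\ast)^{-1}Y^\top\in\C{\Np}{\Np}$. First I would record that $Q$ is well defined and is itself a Hermitian idempotent: the Gram-type matrix $Y^\top Y^\ast=\overline{Y^\hop Y}$ is the complex conjugate of a positive definite matrix, hence invertible since $Y\in\Vcal^{\Np\times\Nrh}$ has full rank $\Nrh$; a direct computation then gives $Q^2=Q$ and, using $(Y^\top)^\hop=Y^\ast$, also $Q^\hop=Q$. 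The identities I will use repeatedly are $P^\perp W=0$ and $W^\hop W=\Idm_{\Nrh}$ on the left factor, together with $Y^\top Q=Y^\top$ on the right factor.

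Next I would check that $\Pi$ maps into the tangent space. Writing $P^\perp w\,Q = X_W Y^\top$ with $X_W:=P^\perp w\,Y^\ast(Y^\top Y^\ast)^{-1}$, and $P w = W X_Y^\top$ with $X_Y^\top:=W^\hop w$, one obtains $\Pi w = X_W Y^\top + W X_Y^\top$, and the dynamically orthogonal condition $X_W^\hop W=0$ follows at once from $P^\perp W=0$; hence $\Pi w\in\T{C}{\Mcal_{\Nrh}}$. To see that $\Pi$ is a projection I would show it acts as the identity on $\T{C}{\Mcal_{\Nrh}}$: for $y=X_W Y^\top + W X_Y^\top$ in the dynamically orthogonal parameterization (so $W^\hop X_W=0$), the relations $P y = W X_Y^\top$ and $P^\perp y = X_W Y^\top$ reduce $\Pi y$ to $X_W Y^\top Q + W X_Y^\top = X_W Y^\top + W X_Y^\top = y$, using $Y^\top Q = Y^\top$. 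Together with the previous step this gives $\Pi^2=\Pi$ and $\mathrm{range}(\Pi)=\T{C}{\Mcal_{\Nrh}}$.

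It then remains to verify orthogonality in the Frobenius inner product, i.e. $\langle w-\Pi w, y\rangle=0$ for every $y\in\T{C}{\Mcal_{\Nrh}}$. The key simplification is that the residual factorizes as $w-\Pi w = P^\perp w\,(\Idm_{\Np}-Q)$, which follows by collecting the two terms of $\Pi$ and using $P+P^\perp=\Idm_{\Nfh}$. Writing a generic tangent vector as $y=X_W Y^\top + W X_Y^\top$ and using that $P^\perp$ and $\Idm_{\Np}-Q$ are Hermitian, I would compute
\[
\langle w-\Pi w, y\rangle = \tr\big((\Idm_{\Np}-Q)\,w^\hop P^\perp y\big).
\]
Since $P^\perp W=0$, the summand $W X_Y^\top$ is annihilated and $P^\perp y = (P^\perp X_W)Y^\top$; moving $Y^\top$ to the front by cyclicity of the trace isolates the factor $Y^\top(\Idm_{\Np}-Q)=Y^\top-Y^\top=0$, so the inner product vanishes. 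Combined with the projection property this identifies $\Pi$ as the $\norm{\cdot}$-orthogonal projection onto $\T{C}{\Mcal_{\Nrh}}$.

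The only genuine hazard, and the step I would be most careful about, is the complex bookkeeping: because $C=WY^\top$ involves the transpose $Y^\top$ rather than the conjugate transpose, the correct right-hand projector is built from $Y^\ast$ and the Hermitian matrix $Y^\top Y^\ast$, not from $Y^\hop Y$. Keeping track of the identities $(Y^\top)^\hop=Y^\ast$ and $Y^\top Q=Y^\top$ is precisely what makes both the idempotency $Q^\hop=Q=Q^2$ and the cancellation $Y^\top(\Idm_{\Np}-Q)=0$ go through; the computation otherwise parallels, and is simpler than, that of Proposition~\ref{prop:SymplProj}.
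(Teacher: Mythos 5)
Your proof is correct, and it takes a genuinely different (though complementary) route from the paper's. The paper obtains the formula \eqref{eq:TmapC} as in \cite[Proposition 7]{FL18}, i.e.\ by \emph{deriving} it as the solution of the constrained least-squares problem: minimize $\mathfrak{J}(X_W,X_Y)=\frac12\norm{w-X_WY^\top-WX_Y^\top}^2$ subject to the gauge $X_W^\hop W=0$; the first-order optimality conditions produce $X_W$ and $X_Y$ and hence the stated map. You instead take the formula as given and \emph{verify} directly that it has the three defining properties of the orthogonal projector: it lands in $\T{C}{\Mcal_{\Nrh}}$ (via $P^\perp W=0$), it restricts to the identity on the tangent space in the dynamically orthogonal gauge (via $Y^\top Q=Y^\top$), and the residual $w-\Pi w=P^\perp w(\Idm_{\Np}-Q)$ is Frobenius-orthogonal to every tangent vector (via cyclicity of the trace and $Y^\top(\Idm_{\Np}-Q)=0$). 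Your checks of the Hermitian idempotency of $Q=Y^\ast(Y^\top Y^\ast)^{-1}Y^\top$ — in particular that $Y^\top Y^\ast=\overline{Y^\hop Y}$ is invertible and Hermitian, so that the right-hand projector must be built from $Y^\ast$ rather than from $Y^\hop Y$ — are exactly the bookkeeping the complex setting requires, and your appeal to the dynamically orthogonal parameterization of the tangent space is legitimate since the paper establishes that gauge beforehand. The trade-off is the usual one: the variational derivation explains where the formula comes from and automatically yields uniqueness of the minimizer, while your verification is more self-contained and makes the orthogonality mechanism explicit; either argument suffices for the lemma as stated.
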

\begin{proof}
The result can be derived similarly to the proof of \cite[Proposition 7]{FL18}
by minimizing the convex functional $\mathfrak{J}(X_W,X_Y):=\frac12\norm{w-X_W Y^\top -W X_Y^\top}^2$
under the constraint $X_W^\hop W=0$.
\end{proof}
Using the expression \eqref{eq:TmapC} for the projection onto the tangent space of $\Mcal_{\Nrh}$,
we can derive from \eqref{eq:dynncomplex} evolution equations for the terms $W$ and $Y$:
Given $C_0=\Pi_{\Mcal_{\Nrh}}\Ccal(t_0)\in\C{\Nfh}{\Np}$ orthogonal projection onto $\Mcal_{\Nrh}$,
find $(W,Y)\in C^1(\Tcal,\St(\Nrh,\c{\Nfh}))\times C^1(\Tcal,\Vcal^{\Np\times\Nrh})$ such that
\begin{equation}\label{eq:eqUcomplex}
\left\{
\begin{array}{ll}
	\dot{Y}^\ast(t) = \widehat{\Xcal}_{\HamN}^\hop(WY^\top,\prm_h)W, & \quad t\in\Tcal,\\
	\dot{W}^\ast(t) = (\Idm_{\Nfh}-W^\ast W^\top)\widehat{\Xcal}_{\HamN}^\ast(WY^\top,\prm_h)Y(Y^\hop Y)^{-1}, &\quad t\in\Tcal.
\end{array}
\right.
\end{equation}
\begin{proposition}\label{prop:EquivRC}
Under the assumption of well-posedness,
problem \eqref{eq:dynn} is equivalent to problem \eqref{eq:dynncomplex}.
\end{proposition}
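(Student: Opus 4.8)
The plan is to prove the equivalence by showing that the isomorphism $\mathfrak{F}$ from \eqref{eq:Frak}, combined with the manifold isomorphism of Lemma~\ref{lem:isoM}, intertwines the two reduced dynamical systems. Concretely, I would show that if $R = UZ^\top \in \Mcal^{\spl}_{\Nr}$ solves \eqref{eq:dynn}, then its image $C = \mathfrak{F}(R) \in \Mcal_{\Nrh}$ solves \eqref{eq:dynncomplex}, and conversely. Since both problems are well-posed (so solutions are uniquely determined by their initial data, which already correspond under \eqref{eq:isoM}), it suffices to verify that the vector fields match, i.e. that
\begin{equation*}
	\mathfrak{F}\big(\Pi_{\T{R}{\Mcal^{\spl}_{\Nr}}}\Xcal_{\HamN}(R,\prm_h)\big)
		= \Pi_{\T{C}{\Mcal_{\Nrh}}}\widehat{\Xcal}_{\HamN}(C,\prm_h),
\end{equation*}
using that $\widehat{\Xcal}_{\HamN} = \mathfrak{F}(\Xcal_{\HamN})$ by definition in \eqref{eq:dynNRcomplex}.

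The key algebraic fact driving this is the standard correspondence between the real symplectic structure and complex multiplication: under the identification $v \mapsto v_q + i v_p$, the action of $\J{\Nf}^\top$ on a real vector corresponds to multiplication by $i$ on its complex image, and the real Frobenius inner product together with the symplectic pairing assembles into the Hermitian inner product on $\C{\Nfh}{\Np}$. First I would record, at the level of the bases, that the orthosymplectic constraint $U = [A \,|\, \J{\Nf}^\top A]$ from Lemma~\ref{lem:MM+} maps under $\mathfrak{F}$ precisely to the complex Stiefel condition $W^\hop W = \Idm_{\Nrh}$ with $W = \mathfrak{F}(A)$, and that the orthogonal projector $\Idm_{\Nf} - UU^\top$ corresponds to the complex orthogonal projector $\Idm_{\Nfh} - WW^\hop$. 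This is the heart of the computation: I must translate the real projection formula in Proposition~\ref{prop:SymplProj}, with its characteristic symmetrization $wZ + \J{\Nf}wZ\J{\Nr}^\top$ and the matrix $S = Z^\top Z + \J{\Nr}Z^\top Z\J{\Nr}^\top$, into the complex formula \eqref{eq:TmapC}, where $S$ collapses to $Y^\top Y^\ast$ (equivalently $Y^\hop Y$ up to conjugation) and the symmetrization becomes a single complex term $w\,Y^\ast$.

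The two terms of the projection should be checked separately. The tangential/coefficient part $UU^\top w$ maps cleanly to $WW^\hop w$ once the $\mathfrak{F}$-compatibility of $U$ and $W$ is established, and matches the coefficient evolution $\dot{Y}^\ast = \widehat{\Xcal}_{\HamN}^\hop W$ in \eqref{eq:eqUcomplex} against the Hamiltonian coefficient equation in \eqref{eq:eqU}. The normal/basis part is where the symplectic symmetrization must be shown to linearize into complex form: I expect to use that $\mathfrak{F}(\J{\Nf}w) = i\,\mathfrak{F}(w)$ and $\mathfrak{F}(Z\J{\Nr}^\top) = i\,\mathfrak{F}(Z)$ (up to conjugation bookkeeping) so that the real combination $(wZ + \J{\Nf}wZ\J{\Nr}^\top)S^{-1}$ becomes $w\,Y^\ast(Y^\top Y^\ast)^{-1}$, matching \eqref{eq:TmapC} and the basis equation for $\dot{W}^\ast$ in \eqref{eq:eqUcomplex}.

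The main obstacle I anticipate is precisely this bookkeeping of complex conjugates and transposes: the map $\mathfrak{F}$ acts asymmetrically on $R$ (as $R_q + iR_p$) versus on $Z^\top$ (where Lemma~\ref{lem:isoM} uses $\mathfrak{F}(Z^\top)^\top$), so I must be careful to track which objects transform holomorphically and which anti-holomorphically, and to verify that the factor $S$ genuinely matches $Y^\hop Y$ rather than its conjugate. Once these conjugation conventions are pinned down consistently, the verification reduces to a direct substitution, and the equivalence of the full dynamics then follows from well-posedness since the initial data correspond under \eqref{eq:isoM} by construction.
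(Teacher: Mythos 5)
Your proposal is correct and follows essentially the same route as the paper, whose proof is only a one-line sketch stating that the result follows from algebraic manipulation of the field equations \eqref{eq:eqU} and \eqref{eq:eqUcomplex} together with the isomorphism \eqref{eq:isoM}; your plan simply makes explicit the intertwining computation (that $\J{\Nf}^\top$ corresponds to multiplication by $i$, $\Idm_{\Nf}-UU^\top$ to $\Idm_{\Nfh}-WW^\hop$, and the symmetrized term with $S^{-1}$ to $wY^\ast(Y^\top Y^\ast)^{-1}$) that the paper leaves implicit. The conjugation bookkeeping you flag is the only real care point (note $\mathfrak{F}(\J{\Nf}w)=-i\,\mathfrak{F}(w)$ while $\mathfrak{F}(\J{\Nf}^\top w)=+i\,\mathfrak{F}(w)$), but you correctly identify it and the overall argument is sound.
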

\begin{proof}
The proof easily follows from algebraic manipulations of the field equations \eqref{eq:eqU}
and \eqref{eq:eqUcomplex} and from the definition of the isomorphism
\eqref{eq:isoM}.
\end{proof}

In view of Proposition \ref{prop:EquivRC}, we can revert to the error estimate established in \cite{FL18}.
\begin{theorem}[{\cite[Theorem 32]{FL18}}]
Let $\Ccal\in C^1(\Tcal,\C{\Nfh}{\Np})$ denote the exact solution of \eqref{eq:dynNRcomplex} and
let $C\in C^1(\Tcal,\Mcal_{\Nrh})$ be the solution of \eqref{eq:dynncomplex} at time $t\in\Tcal$.
Assume that no crossing of the singular values of $\Ccal$ occurs, namely
\begin{equation*}
	\sigma_n(\Ccal(t)) > \sigma_{n+1}(\Ccal(t)),\qquad \forall\, t\in\Tcal.
\end{equation*}
Let $\Pi_{\Mcal_{\Nrh}}$ be the $\norm{\cdot}$-orthogonal projection onto $\Mcal_{\Nrh}$.
Then, at any time $t\in\Tcal$, the error between the approximate solution $C(t)$ and the best rank-$\Nrh$ approximation of
$\Ccal(t)$ can be bounded as
\begin{equation*}
	\norm{C(t)-\Pi_{\Mcal_{\Nrh}}\Ccal(t)}
		\leq \int_{\Tcal}\nu
		\norm{\Ccal(s)-\Pi_{\Mcal_{\Nrh}}\Ccal(s)} e^{\mu(t-s)}\,ds,
\end{equation*}
where $\mu\in\r{}$ and $\nu\in\r{}$ are defined as
\begin{equation*}
	\mu:=L_{\Xcal} + 2\, \sup_{t\in\Tcal} \dfrac{\norm{\Xcal_{\HamN}(\Ccal(t),\eta_h)}}{\sigma_{n}(\Ccal(t))}\,,
	\qquad\;
	\nu:=L_{\Xcal}+\dfrac{\norm{\Xcal_{\HamN}(\Ccal(s),\eta_h)}}{\sigma_{n}(\Ccal(s))-\sigma_{n+1}(\Ccal(s))},
\end{equation*}
and $L_{\Xcal}\in\r{}$ denotes the Lipschitz continuity constant of $\Xcal_{\HamN}$.
\end{theorem}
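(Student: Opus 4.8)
The plan is to recognize that this theorem is cited verbatim from \cite[Theorem 32]{FL18}, so rather than reproving it from scratch I would structure the argument to reduce our complex reduced problem \eqref{eq:dynncomplex} to the abstract framework of \cite{FL18} and then invoke their result. The key observation is that $\Mcal_{\Nrh}$, the manifold of rank-$\Nrh$ complex matrices, is a smooth embedded submanifold of $\C{\Nfh}{\Np}$ with the same geometric structure used in \cite[Section 5]{FL18} for real low-rank matrices; the only difference is the ground field. Thus the first step is to verify that the curvature estimates and the differentiability properties of the $\norm{\cdot}$-orthogonal projection $\Pi_{\Mcal_{\Nrh}}$ onto $\Mcal_{\Nrh}$ carry over unchanged to the complex setting. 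This is routine because $\C{\Nfh}{\Np}$ with the Frobenius inner product $\langle A,B\rangle = \tr(A^\hop B)$ is isometric to $\r{2\Nfh\Np}$ as a real inner product space, and the rank-$\Nrh$ stratum corresponds to a real-analytic submanifold; the singular value decomposition and its perturbation theory are identical.

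Granting that reduction, the core of the argument follows the standard comparison between the dynamical low-rank trajectory and the best-approximation trajectory. First I would set $e(t) := C(t) - \Pi_{\Mcal_{\Nrh}}\Ccal(t)$ and differentiate $\norm{e(t)}$, using $\dot{C} = \Pi_{\T{C}{\Mcal_{\Nrh}}}\widehat{\Xcal}_{\HamN}(C)$ from \eqref{eq:dynncomplex}. The derivative of $\Pi_{\Mcal_{\Nrh}}\Ccal$ is controlled by decomposing it into the tangential motion of the exact solution and the motion of the projection operator itself; the latter is where the curvature of $\Mcal_{\Nrh}$ enters and produces the factor involving $\sigma_n^{-1}$. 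Next I would insert and subtract $\Pi_{\T{\Pi_{\Mcal_{\Nrh}}\Ccal}{\Mcal_{\Nrh}}}\widehat{\Xcal}_{\HamN}(\Pi_{\Mcal_{\Nrh}}\Ccal)$, estimate the difference of vector fields by the Lipschitz constant $L_{\Xcal}$, and bound the difference of projection operators at nearby base points by the curvature of the manifold, which scales like $\sigma_n^{-1}$ and $(\sigma_n - \sigma_{n+1})^{-1}$. Collecting these yields a differential inequality of the form $\frac{d}{dt}\norm{e(t)} \leq \mu\,\norm{e(t)} + \nu\,\norm{\Ccal(t)-\Pi_{\Mcal_{\Nrh}}\Ccal(t)}$, and then Gr\"onwall's lemma produces exactly the claimed integral bound with the stated $\mu$ and $\nu$.

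The main obstacle, and the step requiring genuine care, is the quantitative control of the projection-operator perturbation, that is, bounding $\norm{\Pi_{\T{C_1}{\Mcal_{\Nrh}}} - \Pi_{\T{C_2}{\Mcal_{\Nrh}}}}$ in terms of $\norm{C_1 - C_2}$ for nearby $C_1,C_2\in\Mcal_{\Nrh}$. This is precisely where the no-crossing hypothesis $\sigma_n(\Ccal(t)) > \sigma_{n+1}(\Ccal(t))$ is essential: it guarantees that $\Mcal_{\Nrh}$ is a genuine manifold near the trajectory (the $\Nrh$-th singular value stays bounded away from zero and from $\sigma_{n+1}$), so the tangent-space projection is differentiable and its derivative—the second fundamental form of $\Mcal_{\Nrh}$—is bounded by the inverse singular-value gaps appearing in $\mu$ and $\nu$. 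Since these curvature bounds are established in \cite{FL18} for real matrices and, as argued above, transfer verbatim to the complex case under the identification $\C{\Nfh}{\Np}\cong\r{2\Nfh\Np}$, the theorem follows directly. The remaining work is purely bookkeeping to match the constants, and I would state it as a direct citation once the complex embedding and the equivalence in Proposition \ref{prop:EquivRC} are in place.
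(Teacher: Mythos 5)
Your proposal matches the paper's treatment: the theorem is not reproved but invoked directly from \cite[Theorem 32]{FL18} once the isomorphism to the complex rank-$n$ manifold and the equivalence of the dynamical systems (Proposition \ref{prop:EquivRC}) are in place, with the curvature bounds and the differentiability of the orthogonal projection transferring verbatim under the identification of $\C{\Nfh}{\Np}$ with a real inner-product space. Your additional sketch of the internal Gr\"onwall argument is consistent with \cite{FL18}; the only slight imprecision is that the gap condition $\sigma_n>\sigma_{n+1}$ is needed not to make $\Mcal_{\Nrh}$ a manifold (it always is) but to guarantee that the best rank-$n$ approximation $\Pi_{\Mcal_{\Nrh}}\Ccal(t)$ is well defined and differentiable along the exact trajectory.
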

The remainder of this work pertains to numerical methods for the temporal discretization of the reduced dynamics
\eqref{eq:eqU}.
Since we consider splitting techniques, see e.g. \cite[Section II.5]{HaLuWa06},
the evolution problems for the expansion coefficients
and for the reduced basis are examined separately.
The coefficients $Z(t)\in V^{\Np\times\Nr}$ of the expansion \eqref{eq:Rrb}
satisfy a Hamiltonian dynamical system \eqref{eq:eqU} in the reduced symplectic manifold of dimension $\Nr$
spanned by the evolving orthosymplectic basis $U(t)\in\Mcal$.
The numerical approximation of the evolution equation for $Z(t)$ can, thus, be performed using symplectic integrators,
\emph{cf.} \cite[Section VI]{HaLuWa06}.
%
Observe that the use of standard splitting techniques might require
the approximate reduced solution, at a given time step, to be projected
into the space spanned by the updated basis.
This might cause an error in the conservation of the invariants due to the projection step,
that, however, can be controlled under sufficiently small time steps.
In principle, exact conservation can be guaranteed if the evolution of the reduced basis
evolves smoothly at the interface of temporal interval (or temporal subintervals associated with the splitting),
or, in other words, if the splitting is synchronous and the two systems
are concurrently advanced in time.
We postpone to future work the investigation and the numerical study of
splitting methods that exactly preserve the Hamiltonian.

\section{Numerical methods for the evolution of the reduced basis}\label{sec:EvolROM}
Contrary to global projection-based model order reduction,
dynamical reduced basis methods eschew
the standard online-offline paradigm.
The construction and evolution of the local reduced basis \eqref{eq:eqU}
does not require queries of the high-fidelity model
so that the method does not incur a computationally expensive offline phase.
However, the evolution of the reduced basis
entails the solution of a matrix equation
in which one dimension equals the size of the full model.
Numerical methods
for the solution of \eqref{eq:eqU} will have
arithmetic complexity $\min\{C_{\Rcal},C_{\Fcal}\}$
where $C_{\Fcal}$ is the computational cost required
to evaluate the velocity field of \eqref{eq:eqU},
and $C_{\Rcal}$ denotes the cost associated with all other operations. 
Assume that the cost to evaluate the Hamiltonian at the reduced solution
has order $O(\alpha(\Nfh))$.
Then, a standard algorithm for the evaluation of the right hand side of \eqref{eq:eqU}
will have arithmetic complexity $C_{\Fcal} = O(\alpha(\Nfh))+O(\Nfh \Nrh^2) +O(\Nfh p\,\Nrh) + O(\Nrh^3)$,
where the last two terms are associated with the computation
of $YZ$,
and the inversion of $C+\J{\Nr}^\top C\J{\Nr}$, respectively.
%
This Section focuses on the development of structure-preserving numerical methods
for the solution of \eqref{eq:eqU}
such that $C_{\Rcal}$ is at most linear in $\Nfh$.
The efficient treatment of the nonlinear terms is out of the scope of the present study and
will be the subject of future investigations on
structure-preserving hyper-reduction techniques.

To \bl{simplify} the notation, we recast \eqref{eq:eqU}
as: For $Q\in\Mcal$, find $U\in C^1(\Tcal,\R{\Nf}{\Nr})$ 
such that
\begin{equation}\label{eq:dynNbasis}
	\left\{
	\begin{array}{ll}
		\dot{U}(t) = \Fcal(U(t)),&\quad\quad\mbox{for }\; t\in\Tcal,\\
		U(t_0) = Q, &
	\end{array}\right.	
\end{equation}
where, for any fixed $t\in\Tcal$,
\begin{equation}\label{eq:Fcal}
	\Fcal(U):=(\Idm_{\Nf}-UU^\top)
		(\J{\Nf}YZ-YZ\J{\Nr}^\top)(Z^\top Z+\J{\Nr}^\top Z^\top Z\J{\Nr})^{-1}.
\end{equation}
\bl{Observe that $\Fcal:U\in\Mcal\mapsto \Fcal(U)\in \TsM{U}\subset\TM{U}$,
where $H_U$ is defined as in \eqref{eq:HU}, and
$\TM{U}  = \{V\in\R{\Nf}{\Nr}:\;U^\top V\in \fr{g}_{\Nr}\}.$}
In a temporal splitting perspective, we assume that the matrix $Z(t)\in V^{\Np\times\Nr}$ is given at each time instant $t\in\Tcal$.
Owing to Proposition \ref{prop:OSflow}, if $Q\in\Mcal$, then $U(t)\in\Mcal$ for all $t\in\Tcal$.
Then, the goal is to develop an efficient numerical scheme such that the discretization of \eqref{eq:dynNbasis}
yields an approximate flow map with trajectories belonging to $\Mcal$.

We propose two intrinsic numerical methods for the solution of
the differential equation \eqref{eq:dynNbasis}
within the class of numerical methods based on local charts on manifolds \cite[Section IV.5]{HaLuWa06}.
The analyticity and the favorable computational properties of the Cayley transform, \emph{cf.} Proposition \ref{prop:Nn2}
and \cite{Iserles01},
makes it our choice as coordinate map on the orthosymplectic matrix manifold.

\subsection{Cayley transform as coordinate map}\label{sec:cay}
Orthosymplectic square matrices form a subgroup $\Un(\Nf)$ of a quadratic Lie group.
We can, therefore, use the Cayley transform to induce a local parameterization of the
Lie group $\Un(\Nf)$ near the identity, with the corresponding Lie algebra as parameter space.
The following results extend to orthosymplectic matrices the properties of the Cayley
transform presented in e.g. \cite[Section IV.8.3]{HaLuWa06}.
\begin{lemma}
Let $\Gcal_{\Nf}$ be the group of orthosymplectic square matrices and
let $\fr{g}_{\Nf}$ be the corresponding Lie algebra.
Let $\cay: \fr{g}_{\Nf}\rightarrow \R{\Nf}{\Nf}$ be the Cayley transform
defined as
\begin{equation}\label{eq:cay}
	\cay(\Omega) = \left(\Idm-\dfrac{\Omega}{2}\right)^{-1}\left(\Idm+\dfrac{\Omega}{2}\right),
	\qquad\forall\, \Omega\in\fr{g}_{\Nf}.
\end{equation}
Then,
\begin{itemize}
	\item[(i)] $\cay$ maps the Lie algebra $\fr{g}_{\Nf}$ into the Lie group $\Gcal_{\Nf}$.
	\item[(ii)] $\cay$ is a diffeomorphism in a neighborhood of the zero matrix $0\in\fr{g}_{\Nf}$.
	The differential of $\cay$ at $\Omega\in\fr{g}_{\Nf}$ is the map $\dcay_\Omega:\T{\Omega}{\fr{g}_{\Nf}}\cong\fr{g}_{\Nf}\rightarrow\T{\cay(\Omega)}{\Gcal_{\Nf}}$,
	\begin{equation*}
		\dcay_\Omega(A) = \left(\Idm-\dfrac{\Omega}{2}\right)^{-1}A\left(\Idm+\dfrac{\Omega}{2}\right)^{-1},
		\qquad \forall\, A\in\fr{g}_{\Nf},
	\end{equation*}
	and its inverse is
	\begin{equation}\label{eq:dcayinv}
		\dcay_\Omega^{-1}(A) = \left(\Idm-\dfrac{\Omega}{2}\right)A\left(\Idm+\dfrac{\Omega}{2}\right),
		\qquad \forall\, A\in\T{\cay(\Omega)}{\Gcal_{\Nf}}.
	\end{equation}
	\item[(iii)] \cite[Theorem 3]{DLP98} Let $\sigma(A)$ denote the spectrum of $A\in\R{\Nf}{\Nf}$.
	If $\Omega\in C^1(\r{},\fr{g}_\Nf)$ then $A:=\cay(\Omega)\in C^1(\r{},\Gcal_{\Nf})$.
	Conversely, if $A\in C^1(\r{},\Gcal_{\Nf})$ and $-1\notin\bigcup_{t\in\r{}}\sigma(A(t))$
	then there exists a unique $\Omega\in C^1(\r{},\fr{g}_\Nf)$ such that
	\bl{$\Omega=\cay^{-1}(A)=2(A-\Idm_{\Nf})(A+\Idm_{\Nf})^{-1}$}.
\end{itemize}
\end{lemma}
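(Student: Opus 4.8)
The plan is to exploit the fact that $\Gcal_{\Nf}=\Un(\Nf)$ is the intersection of the two quadratic Lie groups $\Ort(\Nf)=\{Y:\,Y^\top Y=\Idm\}$ and $\Sp(\Nf)=\{Y:\,Y^\top\J{\Nf}Y=\J{\Nf}\}$, associated with the two nondegenerate matrices $P\in\{\Idm,\J{\Nf}\}$; correspondingly $\fr{g}_{\Nf}=\fr{so}(\Nf)\cap\fr{sp}(\Nf)$ is cut out by the single linear identity $\Omega^\top P+P\Omega=0$, i.e. $\Omega^\top=-P\Omega P^{-1}$, for both choices of $P$. I would first record that $\cay$ is well defined on all of $\fr{g}_{\Nf}$: since $\Omega$ is skew-symmetric its spectrum is purely imaginary, so $\Idm-\Omega/2$ (and likewise $\Idm+\Omega/2$, which will appear in the differential) is invertible.

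For (i), I would prove $Y^\top P\,Y=P$ for both values of $P$ simultaneously, where $Y=\cay(\Omega)=(\Idm-\Omega/2)^{-1}(\Idm+\Omega/2)$; note the two factors commute, being functions of $\Omega$. Transposing and inserting $\Omega^\top=-P\Omega P^{-1}$ gives the intertwining relations $\Idm\pm\Omega^\top/2=P(\Idm\mp\Omega/2)P^{-1}$, from which $Y^\top P=P(\Idm-\Omega/2)(\Idm+\Omega/2)^{-1}$. Right-multiplying by $Y$ and collapsing the four mutually commuting factors $(\Idm\pm\Omega/2)^{\pm1}$ then leaves exactly $P$. Taking $P=\Idm$ and $P=\J{\Nf}$ shows $Y\in\Ort(\Nf)\cap\Sp(\Nf)=\Gcal_{\Nf}$.

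For (ii), $\cay$ is rational, hence real-analytic, on the open set where $\Idm-\Omega/2$ is invertible, and at $\Omega=0$ its differential is the identity; the inverse function theorem then yields a local analytic inverse, so $\cay$ restricts to a diffeomorphism between neighborhoods of $0\in\fr{g}_{\Nf}$ and $\Idm\in\Gcal_{\Nf}$. To identify $\dcay$ I would differentiate the defining identity $(\Idm-\Omega/2)\,\cay(\Omega)=\Idm+\Omega/2$ along $\Omega(s)=\Omega+sA$ with $A\in\fr{g}_{\Nf}$; solving for $Y'(0)$ and using $\Idm+\cay(\Omega)=2(\Idm-\Omega/2)^{-1}$ gives $Y'(0)=(\Idm-\Omega/2)^{-1}A(\Idm-\Omega/2)^{-1}$. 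Forming the right-trivialization $Y'(0)Y^{-1}$ with $Y^{-1}=(\Idm+\Omega/2)^{-1}(\Idm-\Omega/2)$ collapses the central factors and produces the stated expression $\dcay_\Omega(A)=(\Idm-\Omega/2)^{-1}A(\Idm+\Omega/2)^{-1}$; the inverse formula follows by the direct substitution $A=(\Idm-\Omega/2)\,\dcay_\Omega(A)\,(\Idm+\Omega/2)$. Finally, transposing and reusing the intertwining identity exactly as in (i) (once with $P=\Idm$, once with $P=\J{\Nf}$) shows that $\dcay_\Omega$ maps $\fr{g}_{\Nf}$ into itself, so it is a genuine linear isomorphism of the Lie algebra. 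Statement (iii) is quoted from \cite{DLP98} and needs no separate argument.

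The hard part will be the noncommutative bookkeeping rather than any conceptual difficulty: in (i) one must transport $P$ through the rational expression using only $\Omega^\top=-P\Omega P^{-1}$, since $P$ does not commute with $\Omega$, and in (ii) one must avoid the tempting but false assumption that $A$ commutes with $\Omega$ — the correct form of $\dcay_\Omega$ emerges only after tracking the order of the factors and applying $\Idm+\cay(\Omega)=2(\Idm-\Omega/2)^{-1}$ together with the right-trivialization.
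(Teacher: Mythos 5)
Your proposal is correct and follows essentially the same route as the paper: a direct algebraic verification of the quadratic-group conditions for (i), the inverse function theorem plus differentiation of the defining identity for (ii), and a citation of \cite{DLP98} for (iii); the only presentational difference is that you verify $Y^\top P Y = P$ uniformly for $P\in\{\Idm,\J{\Nf}\}$ via the intertwining relation, whereas the paper checks orthogonality directly and then shows that $\cay(\Omega)$ commutes with $\J{\Nf}$. One small inaccuracy in your closing remark: for $P=\J{\Nf}$ the matrix $\Omega\in\fr{g}_{\Nf}$ \emph{does} commute with $P$ (skew-symmetry combined with the Hamiltonian condition gives $\Omega\J{\Nf}=\J{\Nf}\Omega$), which is exactly the shortcut the paper exploits --- your conjugation argument does not rely on this, so nothing breaks.
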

\begin{proof}
Let $\Omega\in\fr{g}_{\Nf}$ and let $\overline{\Omega}:=\Omega/2$. Since $\Omega$ is skew-symmetric then $I-\overline{\Omega}$ is invertible.\\
(i) The Cayley transform defined in \eqref{eq:cay} can be recast as
\begin{equation}\label{eq:cay1}
\begin{aligned}
	\cay(\Omega) & = -(\Idm-\overline{\Omega})^{-1}(-2\Idm+(\Idm-\overline{\Omega})) = 2(\Idm-\overline{\Omega})^{-1}-\Idm\\
			& = -(-2\Idm+(\Idm-\overline{\Omega}))(\Idm-\overline{\Omega})^{-1} = (\Idm+\overline{\Omega})(\Idm-\overline{\Omega})^{-1}.
\end{aligned}
\end{equation}
Then, using \eqref{eq:cay1} and the skew-symmetry of $\Omega\in\fr{g}_{\Nf}$ results in
\begin{equation*}
\begin{aligned}
	\cay(\Omega)^\top \cay(\Omega) & 
						   = (\Idm-\overline{\Omega})^{-\top}(\Idm + \overline{\Omega}^\top \overline{\Omega})(\Idm-\overline{\Omega})^{-1}\\
			& = (\Idm-\overline{\Omega})^{-\top}(\Idm -\overline{\Omega}-\overline{\Omega}^\top + \overline{\Omega}^\top \overline{\Omega})(\Idm-\overline{\Omega})^{-1} = \Idm.
\end{aligned}
\end{equation*}
Moreover, 
$\cay(\Omega)\J{\Nf} = \J{\Nf}\cay(\Omega)$ since
\begin{equation*}
\begin{aligned}
	\cay(\Omega)\J{\Nf} & 
					 = (\Idm +\overline{\Omega})(-\J{\Nf}+\overline{\Omega}\J{\Nf})^{-1}
	 			     = (\Idm +\overline{\Omega})(-\J{\Nf}+\J{\Nf}\overline{\Omega})^{-1}\\
	 			   & = (\Idm +\overline{\Omega})\J{\Nf}(I-\overline{\Omega})^{-1}
					 = (\J{\Nf}-\J{\Nf}\overline{\Omega}^\top)(\Idm-\overline{\Omega})^{-1}\\
				   & = (\J{\Nf}+\J{\Nf}\overline{\Omega})(\Idm-\overline{\Omega})^{-1}
				     = \J{\Nf}\,\cay(\Omega).
\end{aligned}
\end{equation*}
(ii) The map $\cay$ \eqref{eq:cay} has non-zero derivative at $0\in\fr{g}_{\Nf}$.
Therefore, by the inverse function theorem, it is a diffeomorphism in a neighborhood of $0\in\fr{g}_{\Nf}$.
Standard rules of calculus yield the expression \eqref{eq:dcayinv}, \emph{cf.} \cite[Section IV.8.3, Lemma 8.8]{HaLuWa06}.
\end{proof}
The factor $1/2$ in the definition \eqref{eq:cay} of the Cayley transform is arbitrary and
has been introduced to guarantee that $\dcay_0 = \Idm_{\Nf}$, which
will be used in Section \ref{sec:TangMeth} for the construction of retraction maps.

To derive computationally efficient numerical schemes for the solution of the basis evolution equation \eqref{eq:dynNbasis}
we exploit the properties of analytic functions evaluated at the product of rectangular matrices.
\begin{proposition}\label{prop:Nn2}
\bl{Let $\Omega\in\fr{g}_\Nf$ and $Y\in\R{\Nf}{r}$.
If $\Omega$ has rank $\Nm\leq \Nf$, then
$\cay(\Omega)Y$ can be evaluated
with computational complexity of order $O(\Nfh r\Nm) + O(\Nm^2 r) + O(\Nm^3)$.}
\end{proposition}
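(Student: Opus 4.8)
The plan is to exploit that $\Omega$ has low rank $\Nm\ll\Nf$ so that the whole evaluation of $\cay(\Omega)Y$ reduces to dense linear algebra of size $\Nm$, the only full-dimensional operations being tall--skinny matrix products. First I would record a rank-$\Nm$ factorization $\Omega = GH^\top$ with $G,H\in\R{\Nf}{\Nm}$, which exists since $\rank{\Omega}=\Nm$; in the RK--MK iteration $\Omega$ is generated directly in such factored form, so producing $G,H$ (together with the attendant $\Nm\times\Nm$ Gram matrix $H^\top G$) costs no more than the tall--skinny budget below. The skew-symmetry of $\Omega\in\fr{g}_{\Nf}$ even permits choosing $H$ as $G$ times a small factor, but only existence of the factorization is needed.

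Next I would rewrite the Cayley transform as a single shifted resolvent. By the first identity in \eqref{eq:cay1}, $\cay(\Omega)=2(\Idm-\Omega/2)^{-1}-\Idm$, so that
\begin{equation*}
	\cay(\Omega)Y = 2\big(\Idm-\tfrac12 GH^\top\big)^{-1}Y - Y ,
\end{equation*}
and the computation reduces to one solve of a shifted low-rank system. The crux is the Sherman--Morrison--Woodbury identity, which collapses the $\Nf\times\Nf$ inverse to an $\Nm\times\Nm$ one:
\begin{equation*}
	\big(\Idm-\tfrac12 GH^\top\big)^{-1}
		= \Idm + \tfrac12\,G\,\big(\Idm_{\Nm}-\tfrac12 H^\top G\big)^{-1}H^\top .
\end{equation*}
Setting $\alpha:=(\Idm_{\Nm}-\tfrac12 H^\top G)^{-1}(H^\top Y)\in\R{\Nm}{r}$ and combining the two displays, the target simplifies to the clean form $\cay(\Omega)Y = Y + G\alpha$.

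Before counting I would verify the two invertibility conditions. Since $\Omega$ is skew-symmetric its spectrum is purely imaginary, hence $2\notin\sigma(\Omega)$ and $\Idm-\Omega/2$ is nonsingular (exactly as used in the proof of the previous lemma); by the validity of the Woodbury formula this is equivalent to nonsingularity of the capacitance matrix $\Idm_{\Nm}-\tfrac12 H^\top G$, so $\alpha$ is well defined. The complexity then follows by inspection: the products $H^\top Y$ and $G\alpha$ are the only ones touching the dimension $\Nf$ and cost $O(\Nf r\Nm)$ each; forming the Gram matrix $H^\top G$ costs $O(\Nf\Nm^2)$, of the same order as $O(\Nf r\Nm)$ in the regime $\Nm\lesssim r$ relevant here, and its factorization adds $O(\Nm^3)$; and the triangular solves for $\alpha$ against the $r$ columns of $H^\top Y$ cost $O(\Nm^2 r)$. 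Summing yields the claimed $O(\Nf r\Nm)+O(\Nm^2 r)+O(\Nm^3)$.

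The main obstacle is not any single identity but the bookkeeping that guarantees no step silently incurs an $O(\Nf^2)$ or $O(\Nf^3)$ cost: one must apply the resolvent strictly through $G$, $H$ and the $\Nm\times\Nm$ capacitance matrix, never forming $(\Idm-\Omega/2)^{-1}$ or the dense product $GH^\top=\Omega$ explicitly. The Woodbury reduction, combined with the skew-symmetry of $\Omega$ that grants invertibility for free, is precisely what confines all dense work to size $\Nm$.
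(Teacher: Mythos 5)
Your argument is correct, and it arrives at the same formula the paper records only as an afterthought: the paper's primary derivation writes $f(z):=z^{-1}(\cay(z)-1)$, extends it analytically across the removable pole at $z=0$ via the series $\sum_m 2^{-m}z^m$, uses $\Omega^m=\alpha(\beta^\top\alpha)^{m-1}\beta^\top$ to collapse the series to $\cay(\Omega)=\Idm+\alpha f(\beta^\top\alpha)\beta^\top$, and only then remarks that the Woodbury identity gives the equivalent expression $\cay(\Omega)Y=Y-\alpha(\tfrac12\beta^\top\alpha-\Idm_{\Nm})^{-1}\beta^\top Y$, which is exactly your $Y+G\alpha$. Your route buys something concrete: the power-series approach needs $\beta^\top\alpha$ to be invertible, which forces the low-rank factors to be full rank (the paper has to cite a separate result to guarantee this), whereas your capacitance matrix $\Idm_{\Nm}-\tfrac12 H^\top G$ is nonsingular for free by the skew-symmetry of $\Omega$, and you avoid reasoning about the removable singularity of $f$ altogether. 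What the paper's series derivation buys in exchange is a template that works for any analytic coordinate map (it is reused verbatim for the exponential in Appendix~A), not just for maps expressible as resolvents. One small bookkeeping point you share with the paper: forming the Gram matrix $H^\top G$ costs $O(\Nfh\Nm^2)$, which is only absorbed into the stated $O(\Nfh r\Nm)$ when $\Nm\lesssim r$; in the downstream application (Proposition on the RK--MK complexity) one has $\Nm=4\Nrh(i-1)$ against $r=\Nr$, so this term is not always dominated --- you at least flag the assumption explicitly, which the paper does not.
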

\begin{proof}
\bl{Since $\Omega$ has rank $\Nm$ it admits the splitting $\Omega =\alpha\beta^\top$ for some $\alpha,\,\beta\in\R{\Nf}{\Nm}$.}
To evaluate the Cayley transform
in a computationally efficient way we exploit the properties of
analytic functions of low-rank matrices.
More in details, let $f(z):=z^{-1}(\cay(z)-1)$ for any $z\in\c{}$.
\bl{The function $f$ has a removable pole at $z = 0$. Its analytic extension reads,
\begin{equation*}
	f(z) = \sum_{m=0}^\infty 2^{-m} z^m.
\end{equation*}}
For any $m\in\mathbb{N}\setminus\{0\}$ it holds
$\Omega^m=(\alpha\beta^\top)^m = \alpha(\beta^\top\alpha)^{m-1}\beta^\top$. 
Hence,
\begin{equation*}
\begin{aligned}
	\cay(\Omega) 
			& = \Idm_{\Nf} + \sum_{m=1}^\infty 2^{1-m} \Omega^m
			= \Idm_{\Nf} + \sum_{m=1}^\infty 2^{1-m} \alpha (\beta^\top\alpha)^{m-1}\beta^\top\\
			& = \Idm_{\Nf} + \alpha f(\beta^\top\alpha)\beta^\top.
\end{aligned}
\end{equation*}
The cost to compute $A:=\beta^\top\alpha\in\R{\Nm}{\Nm}$ is $O(\Nfh\Nm^2)$.
Moreover,
\begin{equation*}
	\cay(\Omega) Y = (\Idm_{\Nfh} + \alpha f(\beta^\top\alpha)\beta^\top)Y
			 = Y + \alpha (\beta^\top\alpha)^{-1}\big(\cay(\beta^\top\alpha)-\Idm_{\Nm}\big)\beta^\top Y.
\end{equation*}
The evaluation of $f(A) = A^{-1}(\cay(A)-\Idm_{\Nm})\in\R{\Nm}{\Nm}$ requires $O(\Nm^3)$ operations.
Finally, the matrix multiplications $\alpha f(A)\beta^\top Y$ 
can be performed 
in \bl{$O(\Nfh r \Nm)+O(\Nm^2 r)$} operations.

The approach suggested hitherto is clearly not unique. The invertibility of the matrix $A$ is ensured under the condition
that the low-rank factors $\alpha$ and $\beta$ are full rank. Although a low-rank decomposition with full rank factors
is achievable \cite[Proposition 4]{CI00}, one could alternatively envision the use of 
Woodbury matrix identity \cite{Wood50} to compute the matrix inverse appearing in the
definition \eqref{eq:cay} of the Cayley transform.
This yields the formula
\begin{equation*}
	\cay(\Omega) Y = Y + \dfrac12\alpha \big(\cay(\beta^\top\alpha)+\Idm_{\Nm}\big)\beta^\top Y
	= Y - \alpha \bigg(\dfrac12 \beta^\top\alpha-\Idm_{\Nm}\bigg)^{-1}\beta^\top Y,
\end{equation*}
which can also be evaluated in \bl{$O(\Nfh r\Nm)+O(\Nm^2 r)+O(\Nm^3)$} operations.
\end{proof}


\subsection{Numerical integrators based on Lie groups \bl{acting on manifolds}}
In this Section we propose a numerical scheme for the solution
of \eqref{eq:dynNbasis} based on Lie group methods, \bl{\emph{cf.} \cite{IMNZ00}.
The idea is to consider $\Mcal$ as a manifold acted upon by the Lie group
$\Gcal_{\Nf}=\Un(\Nf)$ of square orthosymplectic matrices.
%
Then, since the local structure in a neighbourhood of any point of $\Gcal_{\Nf}$ can be described by the corresponding Lie algebra $\fr{g}_{\Nf}$, a local coordinate map is employed
to derive a differential equation on $\fr{g}_{\Nf}$.
Since Lie algebras are linear spaces,
using Runge--Kutta methods to solve the equation on $\fr{g}_{\Nf}$
allows to derive discrete trajectories that remain on the Lie algebra.
This approach falls within the class of numerical
integration schemes based on canonical coordinates of
the first kind, also known as Runge--Kutta Munthe-Kaas (RK-MK) methods
\cite{MK95,MKZ97,MK98,MK99}.}
\begin{proposition}\label{prop:Gsyst}
The evolution equation \eqref{eq:dynNbasis}
with arbitrary $\Fcal:\Mcal\rightarrow\TM{}$
is equivalent to the problem:
For $Q\in\Mcal$, find $U\in C^1(\Tcal,\Mcal)$ such that
\begin{equation}\label{eq:dynNbasisGr}
	\left\{
	\begin{array}{ll}
		\dot{U}(t) = \Lcal(U(t))\, U(t),&\quad\quad\mbox{for }\; t\in\Tcal,\\
		U(t_0) = Q, &
	\end{array}\right.
\end{equation}
\bl{with $\Lcal:\Mcal\rightarrow\fr{g}_\Nf$} defined as
\begin{equation}\label{eq:GS}
	\Lcal(U) := \dfrac12 \big(\Scal(U) + \J{\Nf}^\top \Scal(U)\J{\Nf}\big),
\end{equation}
where
$\Scal(U) := (\Idm_{\Nf} - U U^\top) \Fcal(U) U^\top -U \Fcal(U)^\top$.
\bl{Furthermore, if $\Fcal(U)\in \TsM{U}$, for any $U\in\Mcal$, then,}
\begin{equation}\label{eq:GSFcal}
	\Lcal(U) = \Fcal(U)U^\top - U\Fcal(U)^\top.
\end{equation}
\end{proposition}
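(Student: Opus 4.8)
The plan is to reduce everything to one structural identity for orthosymplectic bases, namely $\J{\Nf}U = U\J{\Nr}$ (together with its transposed forms $U^\top\J{\Nf} = \J{\Nr}U^\top$ and $\J{\Nf}^\top U = U\J{\Nr}^\top$), which follows at once from the representation $U=[A\,|\,\J{\Nf}^\top A]$ supplied by Lemma~\ref{lem:MM+}. I will also use that a genuine tangent vector $V:=\Fcal(U)\in\TM{U}$ obeys two properties: $U^\top V\in\fr{g}_{\Nr}$, so that $G:=U^\top V$ is skew-symmetric and commutes with $\J{\Nr}$, and the compatibility relation $\J{\Nf}V = V\J{\Nr}$, inherited from the fact that $V$ is the velocity of a curve $t\mapsto[A(t)\,|\,\J{\Nf}^\top A(t)]$ in $\Mcal$. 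Throughout I abbreviate $F:=\Fcal(U)$ and $G:=U^\top F$.

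First I would verify that $\Lcal(U)\in\fr{g}_{\Nf}$. Expanding $\Scal(U)=FU^\top-UGU^\top-UF^\top$ and using $G^\top=-G$, a one-line computation gives $\Scal(U)+\Scal(U)^\top=0$, so $\Scal(U)\in\fr{so}(\Nf)$; this uses only skew-symmetry of $G$, not the full $\fr{g}_{\Nr}$ membership. The symmetrization \eqref{eq:GS} then supplies the symplectic half: since $\J{\Nf}^2=-\Idm$ and $\J{\Nf}$ is orthogonal, one checks $\J{\Nf}^\top\Lcal(U)\J{\Nf}=\frac12(\J{\Nf}^\top\Scal(U)\J{\Nf}+\Scal(U))=\Lcal(U)$, i.e. $\Lcal(U)$ commutes with $\J{\Nf}$, while $\Lcal(U)^\top=\frac12(\Scal(U)^\top+\J{\Nf}^\top\Scal(U)^\top\J{\Nf})=-\Lcal(U)$ by skewness of $\Scal(U)$. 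Being skew-symmetric and commuting with $\J{\Nf}$ is exactly membership in $\fr{g}_{\Nf}=\fr{so}(\Nf)\cap\fr{sp}(\Nf)$.

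Next comes the equivalence, for which it suffices to prove $\Lcal(U)U=\Fcal(U)$ at every $U\in\Mcal$. Using $U^\top U=\Idm_{\Nr}$ and $F^\top U=G^\top=-G$, I compute $\Scal(U)U=(\Idm-UU^\top)F-UF^\top U=F-UU^\top F+UU^\top F=F$. For the other half of $\Lcal(U)U$, the identity $\J{\Nf}U=U\J{\Nr}$ moves the symplectic unit through $U$, giving $\J{\Nf}^\top\Scal(U)\J{\Nf}U=\J{\Nf}^\top(\Scal(U)U)\J{\Nr}=\J{\Nf}^\top F\J{\Nr}$, and then the compatibility $\J{\Nf}F=F\J{\Nr}$ collapses this to $\J{\Nf}^\top\J{\Nf}F=F$. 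Hence $\Lcal(U)U=\frac12(F+F)=\Fcal(U)$, so the two right-hand sides define the same vector field on $\Mcal$; since the initial datum $Q\in\Mcal$ is shared, the two initial value problems coincide.

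Finally, for the horizontal case $\Fcal(U)\in\TsM{U}$ I would use $U^\top F=0$ to drop the middle term: $(\Idm-UU^\top)F=F$ yields $\Scal(U)=FU^\top-UF^\top$, which is manifestly skew and, using $\J{\Nf}U=U\J{\Nr}$ together with $\J{\Nf}F=F\J{\Nr}$ (and its transpose $F^\top\J{\Nf}=\J{\Nr}F^\top$), also commutes with $\J{\Nf}$; hence $\J{\Nf}^\top\Scal(U)\J{\Nf}=\Scal(U)$, the symmetrization is idempotent, and $\Lcal(U)=\Scal(U)=\Fcal(U)U^\top-U\Fcal(U)^\top$. The one delicate point — the main obstacle — is the correct reading of the tangency hypothesis: the step $\J{\Nf}^\top F\J{\Nr}=F$ is false for a matrix satisfying only $U^\top V\in\fr{g}_{\Nr}$ and genuinely requires the $J$-compatibility $\J{\Nf}V=V\J{\Nr}$ carried by honest tangent vectors to $\Mcal$. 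Everything else is bookkeeping built on the single identity $\J{\Nf}U=U\J{\Nr}$.
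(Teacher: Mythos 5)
Your proof is correct, but it takes a genuinely different route from the paper's. The paper derives \eqref{eq:GS} constructively: it completes $U(t)$ to a square orthosymplectic matrix $Y(t)=[U(t)\,|\,W(t)]\in\Un(\Nf)$, differentiates the constraints $Y^\top Y=\Idm$ and $Y^\top\J{\Nf}Y=\J{\Nf}$ to obtain $\dot Y=\Acal(Y,\dot Y)\,Y$ with $\Acal\in\fr{g}_{\Nf}$, and then fixes the free part of the completion by choosing $\dot W W^\top=-U\Fcal(U)^\top(\Idm_{\Nf}-UU^\top)$, which turns $\Acal$ into an expression in $U$ and $\Fcal(U)$ alone and yields \eqref{eq:GS}. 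You instead verify the two defining identities directly --- $\Lcal(U)\in\fr{g}_{\Nf}$ and $\Lcal(U)U=\Fcal(U)$ --- with everything resting on the intertwining relation $\J{\Nf}U=U\J{\Nr}$ extracted from Lemma~\ref{lem:MM+}. Your route is shorter, needs no existence argument for the completion $W$, and establishes both directions of the equivalence at once (the paper explicitly derives \eqref{eq:dynNbasisGr} from \eqref{eq:dynNbasis}; the converse also reduces to the identity $\Lcal(U)U=\Fcal(U)$, which the paper leaves implicit). What it does not give you is the provenance of the formula: the completion argument explains where the symmetrization $\tfrac12(\Scal+\J{\Nf}^\top\Scal\J{\Nf})$ comes from and ties the proposition to the Lie-group-action framework behind Algorithm~\ref{algo:RKMK}. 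Your closing caveat is substantive and applies equally to the paper: with the purely algebraic reading $\TM{U}=\{V:\,U^\top V\in\fr{g}_{\Nr}\}$, the step $\J{\Nf}^\top \Fcal(U)\J{\Nr}=\Fcal(U)$ does not follow, so ``arbitrary $\Fcal$'' must be understood as a field whose values are velocities of curves in $\Mcal$ (equivalently, satisfy $\J{\Nf}\Fcal(U)=\Fcal(U)\J{\Nr}$); this holds for the relevant field \eqref{eq:Fcal}, which takes values in $\TsM{U}$, and the paper's proof assumes the same thing implicitly when it posits an orthosymplectic extension $Y(t)$ of the trajectory.
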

\begin{proof}
\bl{Let us consider, at each time $t\in\Tcal$, an orthosymplectic extension
$Y(t)\in\R{\Nf}{\Nf}$ of $U(t)$
by the matrix $W(t)\in\R{\Nf}{2(\Nfh-\Nrh)}$,
such that $Y(t)=[U(t)\,|\,W(t)]\in \Un(\Nf)$.
Since $Y$ is orthosymplectic by construction, it holds
\begin{equation*}
\begin{array}{lll}
	0 & = \dfrac{d}{dt}(Y^\top Y) = \dot{Y}^\top Y+Y^\top \dot{Y},
		& \quad\Longrightarrow\quad \dot{Y} = -Y\dot{Y}^\top Y,\\[0.5em]
	0 & = \dfrac{d}{dt}(Y^\top\J{\Nf} Y) =\dot{Y}^\top\J{\Nf} Y+Y^\top\J{\Nf} \dot{Y},
		& \quad\Longrightarrow\quad \dot{Y} = -\J{\Nf}^\top Y\dot{Y}^\top\J{\Nf} Y.
\end{array}	
\end{equation*}
It follows that
$\dot{Y}(t) = \Acal(Y,\dot{Y}) Y(t)$, for all $t\in\Tcal$,
with
\begin{equation*}
	\Acal(Y,\dot{Y}) := -\dfrac12\big(Y\dot{Y}^\top + \J{\Nf}^\top Y\dot{Y}^\top\J{\Nf}\big)\in\fr{g}_{\Nf},
\end{equation*}
and $Y\dot{Y}^\top=U\dot{U}^\top+W\dot{W}^\top$.
Expressing $\Acal(Y,\dot{Y})$ explicitly in terms of $U$ and $W$,
and using the evolution equation satisfied by $U$, yields
\begin{equation}\label{eq:G}
	\Acal(Y,\dot{Y}) = -\dfrac12\big(U\Fcal(U)^\top +\J{\Nf}^\top U \Fcal(U)^\top\J{\Nf}
		+ W\dot{W}^\top + \J{\Nf}^\top W\dot{W}^\top\J{\Nf}\big).
\end{equation}
Moreover, since $\Acal$ is skew-symmetric, it holds
\begin{equation}\label{eq:W}
	\dot{W}W^\top  + W\dot{W}^\top =  - U \Fcal(U)^\top - \Fcal(U) U^\top.
\end{equation}
If $W\in\R{\Nf}{2(\Nfh-\Nrh)}$ is such that
$\dot{W}W^\top=-U\Fcal(U)^\top (\Idm_{\Nf}-UU^\top)$, then \eqref{eq:W}
it satisfied, owing to the fact that $\Fcal(U)\in \TM{U}$.
Substituting this expression in \eqref{eq:G} yields
expression \eqref{eq:GS} with $\Lcal(U) = \Acal(Y,\dot{Y})$.

Finally, if $\Fcal(U)$ belongs to $\TsM{U}$ then $U^\top\Fcal(U)=0$.
Substituting in \eqref{eq:GS} and using the fact that $\J{\Nf}^\top\Fcal(U) U^\top\J{\Nf}=\Fcal(U) U^\top$
yields \eqref{eq:GSFcal}.}
\end{proof}

\bl{Once we have recast \eqref{eq:dynNbasis} into the equivalent problem \eqref{eq:dynNbasisGr},
the idea is to derive an evolution equation on the Lie algebra $\fr{g}_{\Nf}$ via a coordinate map.
A coordinate map of the first kind is a smooth function
$\coo:\fr{g}_{\Nf}\rightarrow \Gcal_{\Nf}$ such that $\coo(0)=\Id\in\Gcal_{\Nf}$ and $\dcoo_0=\Id$, where
$\dcoo:\fr{g}_{\Nf}\times\fr{g}_{\Nf}\rightarrow\fr{g}_{\Nf}$ is the right trivialized tangent of $\coo$ defined as
\begin{equation}\label{eq:dcoo}
\dfrac{d}{dt} \coo(A(t))=\dcoo_{A(t)}(\dot{A}(t))\coo(A(t)),
\qquad \forall\, A:\mathbb{R}\rightarrow \fr{g}_{\Nf}.
\end{equation}
For sufficiently small $t\geq t_0$, the solution of \eqref{eq:dynNbasisGr}
is given by
$U(t) = \bl{\coo}(\Omega(t))U(t_0)$ where $\Omega(t)\in\fr{g}_{\Nf}$
satisfies
\begin{equation}\label{eq:Nalgebr}
	\left\{
	\begin{array}{ll}
		\dot{\Omega}(t) = \bl{\dcoo}_{\Omega(t)}^{-1}\big(\Lcal\big(U(t)\big)\big),&\quad\quad\mbox{for }\; t\in\Tcal,\\
		\Omega(t_0) = \bl{0}. &
	\end{array}\right.
\end{equation}

}

\bl{Problem \eqref{eq:Nalgebr} can be solved using traditional RK methods.}
Let $(b_i,a_{i,j})$, for $i=1,\ldots,s$
and $j=1,\ldots,\Ns$,
be the coefficients of the Butcher tableau describing an $\Ns$-stage explicit RK method.
Then, the numerical approximation of \eqref{eq:Nalgebr} in the interval $(t^m,t^{m+1}]$ is performed as in Algorithm~\ref{algo:RKMK}.

\begin{algorithm}
\caption{Explicit RK-MK scheme in $(t^m,t^{m+1}]$}\label{algo:RKMK}
\bl{\normalsize
\begin{algorithmic}[1]
 \Require{$U_m\in\Mcal$, $\{b_i\}_{i=1}^{\Ns}$, $\{a_{i,j}\}_{i,j=1}^{\Ns}$ }
 \State $\Omega_{m}^1=0$, $U_m^1=U_m$
 \For{$i=2,\ldots,\Ns$}
 \State $\mmi = \dt\sum\limits_{j=1}^{i-1} a_{i,j} \, \bl{\dcoo}_{\Omega_m^j}^{-1}\big(\Lcal(\umj)\big)$,\label{line:Mi}
 \State $\umi = \bl{\coo}(\mmi)\,U_m$,\label{line:Ui}
 \EndFor
 \State $\Omega_{m+1} = \dt\sum\limits_{i=1}^\Ns b_i\,\bl{\dcoo}_{\mmi}^{-1}\big(\Lcal(\umi)\big)$,
 \State \Return $U_{m+1} = \bl{\coo}(\Omega_{m+1})\,U_m\in\Mcal$
\end{algorithmic}}
\end{algorithm}
\bl{
As anticipated in Section~\ref{sec:cay}, we resort to the Cayley transform as coordinate map in Algorithm~\ref{algo:RKMK}.
The use of the Cayley transform in the solution of matrix differential equations on Lie groups was proposed in
\cite{DLP98,LP01,Iserles01}.
Analogously to \cite[Theorem 5]{DLP98}, it can be shown that
the invertibility of $\cay$ and $\dcay$ is guaranteed if
$U(t)\in\Mcal$ solution of \eqref{eq:dynNbasisGr} satisfies $-1\notin\bigcup_{t\in\Tcal}\sigma(U(t))$.}
Note that choosing a sufficiently small time step for the temporal integrator
can prevent the numerical solution from having an eigenvalue close to $-1$,
for some $t\in\Tcal$.
Alternatively, restarting procedures of the Algorithm~\ref{algo:RKMK} can be implemented
similarly to \cite[pp. 323-324]{DLP98}.

The computational cost of Algorithm~\ref{algo:RKMK} with $\coo=\cay$ is assessed in the following result.
\bl{\begin{proposition}\label{prop:RKcay}
Consider the evolution problem \eqref{eq:Nalgebr} on a fixed temporal interval $(t^m,t^{m+1}]\subset\Tcal$. Assume that the problem is solved with
Algorithm~\ref{algo:RKMK} where the coordinate map $\coo$ is
given by the Cayley transform $\cay$ defined in \eqref{eq:cay}.
Then, the computational complexity of the resulting scheme is of order
$O(\Nfh\Nrh^2\Ns^2)+O(\Nrh^3\Ns^4)+C_{\Fcal}$,
where $C_{\Fcal}$ is the
complexity of the algorithm to compute $\Fcal(U)$ in \eqref{eq:Fcal} at any given $U\in\Mcal$.
\end{proposition}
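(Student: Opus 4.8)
The plan is to account for the arithmetic cost of each operation performed within a single sweep of Algorithm~\ref{algo:RKMK}, exploiting two structural facts: that every $\Lcal(\umi)\in\fr{g}_\Nf$ produced by \eqref{eq:GSFcal} has rank at most $2\Nrh$, and that the Cayley transform and its inverse trivialized tangent act on such low-rank data at a cost governed by Proposition~\ref{prop:Nn2}. First I would observe that, since $\Fcal(\umi)\in\TsM{\umi}$ by the structural property of $\Fcal$ noted after \eqref{eq:Fcal}, the matrix $\Lcal(\umi)=\Fcal(\umi)(\umi)^\top-\umi\Fcal(\umi)^\top$ admits a rank-$(2\Nrh)$ factorization $\alpha\beta^\top$ with $\alpha,\beta\in\R{\Nf}{2\Nrh}$ assembled from the columns of $\Fcal(\umi)$ and $\umi$; writing down this factorization explicitly is the key that unlocks all subsequent complexity bounds, and its cost is the single evaluation $C_{\Fcal}$ plus $O(\Nfh\Nrh)$ to stack the factors.

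Next I would track how this low-rank structure propagates through the stage recursion. Each increment $\mmi=\dt\sum_{j<i}a_{i,j}\dcay_{\Omega_m^j}^{-1}(\Lcal(\umj))$ is a linear combination of terms of the form $\dcay_\Omega^{-1}(A)=(\Idm-\Omega/2)A(\Idm+\Omega/2)$ by \eqref{eq:dcayinv}. Because each $\Omega_m^j$ is itself a sum of at most $s$ low-rank terms, $\Omega_m^j$ has rank at most $2\Nrh(i-1)\le 2\Nrh s$, so the product $(\Idm-\Omega_m^j/2)\Lcal(\umj)(\Idm+\Omega_m^j/2)$ remains a low-rank matrix whose factors can be formed by multiplying the thin factors $\alpha,\beta$ against the low-rank parts of $\Omega_m^j$. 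I would bound the cost of forming each such trivialized-tangent evaluation: the rank of the accumulated $\Omega$ is $O(\Nrh s)$, so products like $\Omega_m^j\,\alpha$ cost $O(\Nfh(\Nrh s)\Nrh)=O(\Nfh\Nrh^2 s)$ per pair, and the inner $O(\Nrh s)\times O(\Nrh s)$ bookkeeping contributes $O(\Nrh^3 s^3)$. Then $\umi=\cay(\mmi)\,U_m$ is evaluated via Proposition~\ref{prop:Nn2} with $r=\Nrh$ and rank $\Nm=O(\Nrh s)$, giving $O(\Nfh\Nrh^2 s)+O(\Nrh^3 s^2)+O(\Nrh^3 s^3)$ per stage.

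Finally I would sum over the $s$ stages, each of which itself involves a sum over $O(s)$ previously computed increments, so the per-step totals carry the extra factor of $s$: the $O(\Nfh\Nrh^2 s)$ term accumulates to $O(\Nfh\Nrh^2 s^2)$, while the cubic-in-$\Nrh$ bookkeeping with its rank-$O(\Nrh s)$ inner dimensions and the double summation accumulates to $O(\Nrh^3 s^4)$, matching the claimed $O(\Nfh\Nrh^2\Ns^2)+O(\Nrh^3\Ns^4)$. The single velocity-field evaluation per stage is subsumed into $C_{\Fcal}$ (with the understanding that the stated bound absorbs the $s$ such evaluations into this term, or that $C_{\Fcal}$ denotes the aggregate velocity cost).

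I expect the main obstacle to be the careful rank bookkeeping in the second step: one must verify that the rank of the accumulated $\Omega_m^j$ grows only linearly in $s\Nrh$ and that every intermediate product in $\dcay^{-1}$ can genuinely be kept in factored form without ever instantiating a dense $\Nf\times\Nf$ matrix. In particular, I would need to confirm that multiplying two low-rank factored matrices and re-expressing the result as a single low-rank factor does not incur a hidden $O(\Nfh^2)$ cost — this hinges on never explicitly forming $(\Idm-\Omega/2)$ but only its action on the thin factors $\alpha,\beta$. Getting the exponents of $s$ exactly right in the final summation, rather than merely an upper bound, is the delicate part, since it depends on whether the rank of $\Omega_m^j$ is counted as $O(\Nrh)$ or $O(\Nrh s)$ at each stage.
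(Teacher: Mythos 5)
Your proposal follows essentially the same route as the paper's proof: factor $\Lcal(\umi)=\Fcal(\umi)(\umi)^\top-\umi\Fcal(\umi)^\top$ as a thin product, propagate the resulting $O(\Nrh i)$ rank bound on $\mmi$ through the stage recursion via the explicit form \eqref{eq:dcayinv} of $\dcay^{-1}$, invoke Proposition~\ref{prop:Nn2} for $\cay(\mmi)U_m$, and sum over stages to obtain $O(\Nfh\Nrh^2\Ns^2)+O(\Nrh^3\Ns^4)$. The only quibble is the constant in the rank bound: concatenating the columns of $\Fcal(\umi)$ and $\umi$ (each $\Nf\times\Nr$) gives factors in $\R{\Nf}{4\Nrh}$, hence $\rank{\Lcal(\umi)}\leq 4\Nrh$ rather than $2\Nrh$, which is immaterial for the asymptotics.
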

\begin{proof}
We need to assess the computational cost of
two operations in Algorithm~\ref{algo:RKMK}: the
evaluation of the map $\Lambda_i:=\dcay_{\mmi}^{-1}(\Lcal(\umi))$ and the computation of $\umi=\cay(\mmi)U_m$,
for any $i=2,\ldots,s$ and with $U_m\in\Mcal$.
First we prove that $\rank{\Lambda_i}\leq 4\Nrh$.
Observe that each term $\{\Lcal(\umi)\}_{i=1}^s$,
with $\Lcal$ defined in \eqref{eq:GSFcal}, can be written as
$\Lcal(\umi) = \gamma_i \delta_i^\top$ where
\begin{equation}\label{eq:GUcd}
	\gamma_i  := \big[\Fcal(\umi) \,|\, {-\umi}\big]\in\R{\Nf}{4\Nrh},
		\qquad
	\delta_i  := \big[\umi \,|\, \Fcal(\umi)\big]\in\R{\Nf}{4\Nrh}.
\end{equation}
For $i=1$, $\Omega_m^1=0$ and, hence, $\Lambda_1=\Lcal(U_m^1)=\gamma_1 \delta_1^\top$ owing to \eqref{eq:GUcd}.
Using definition \eqref{eq:dcayinv}, it holds
\begin{equation*}
	\Lambda_i =\dcay_{\mmi}^{-1}(\gamma_i\delta_i^\top)
	= \bigg(\Idm_{\Nf} - \dfrac{\mmi}{2} \bigg)\gamma_i \delta_i^\top
			\bigg(\Idm_{\Nf} + \dfrac{\mmi}{2} \bigg)=:e_i f_i^\top,
			\quad \forall\, i\geq 2,
\end{equation*}
where $e_i, f_i\in\R{\Nf}{4\Nrh}$ are defined as
\begin{equation*}
		e_i := \bigg(\Idm_{\Nf} - \dfrac{\mmi}{2}\bigg)\gamma_i,\qquad
		f_i := \bigg(\Idm_{\Nf} + \dfrac{(\mmi)^\top}{2}\bigg)\delta_i .
\end{equation*}
Using Line~\ref{line:Mi} of Algorithm~\ref{algo:RKMK},
the rank of $\mmi$ can be bounded as
\begin{equation*}
	\rank{\mmi}=\mathrm{rank}\bigg(\dt\sum_{j=1}^{i-1}a_{i,j}\Lambda_i\bigg)
	\leq \sum_{j=1}^{i-1}\rank{\Lambda_i}\leq 4\Nrh (i-1),
\end{equation*}
and similarly $\rank{\Omega_{m+1}}\leq 4\Nrh \Ns$.
Since the cost to compute each factor $e_i,f_i$ is $O(\Nfh\Nrh\,\rank{\mmi})$,
the computation of all $\Lambda_i$, for $1\leq i\leq \Ns$, requires $O(\Nfh\Nrh^2\Ns^2)$
operations.
Furthermore, in view of Proposition \ref{prop:Nn2},
each $\umi$ can be computed with $O(\Nfh\Nrh^2 i) + O(\Nrh^3 i^3)$ operations.
Summing over the number $\Ns$ of stages of the Runge--Kutta scheme, the computational complexity of Algorithm~\ref{algo:RKMK} becomes $O(\Nfh\Nrh^2 \Ns^2) + O(\Nrh^3 \Ns^4)$.
\end{proof}}
\bl{In principle one can solve the evolution equation \eqref{eq:Nalgebr}
on the Lie algebra $\fr{g}_{\Nf}$ using
the matrix exponential as coordinate map instead of the Cayley transform, in the spirit of \cite{MK99}.
However, there is no significant gain in terms of computational cost, as shown in details in Appendix~\ref{sec:exp}.}

Although the computational complexity of
Algorithm~\ref{algo:RKMK} is linear in the full dimension,
it presents a suboptimal dependence on
the number $\Ns$ of stages of the RK scheme.
However, in practical implementations,
the computational complexity of Proposition \ref{prop:RKcay}
might prove to be pessimistic in $\Ns$, and
might be mitigated with techniques that exploit the structure of the operators involved.


In the following Section we improve the efficiency of the numerical approximation of \eqref{eq:dynNbasis}
by developing a scheme
which is structure-preserving and has a computational cost $O(\Nfh\Nrh^2\Ns)$,
namely only linear in the dimension $\Nfh$ of the full model and in the number $\Ns$ of RK stages.


\subsection{Tangent methods on the orthosymplectic matrix manifold}\label{sec:TangMeth}
In this Section we derive a tangent method
based on retraction maps for the numerical solution of the reduced basis evolution problem
\eqref{eq:dynNbasis}.
The idea of tangent methods is presented in \cite[Section 2]{CO02} and consists in
expressing any $U(t)\in\Mcal$ in a neighborhood of a given $Q\in\Mcal$, via
a smooth local map $\Rcal_Q:\TM{Q}\rightarrow \Mcal$, as
\begin{equation}\label{eq:Usigma}
	U(t) = \Rcal_Q(V(t)),\qquad V(t)\in \TM{Q}.
\end{equation}
Let $\Rcal_Q$ be the restriction of a smooth map $\Rcal$
to the fiber $\TM{Q}$ of the tangent bundle. Assume that
$\Rcal_Q$ is defined in some open ball around $0\in\TM{Q}$,
and $\rqv=Q$ if and only if $V\equiv 0\in\TM{Q}$.
Moreover, let $\dr_Q:T\TM{Q}\cong\TM{Q}\times\TM{Q}\longrightarrow \TM{}$ be the \bl{(right trivialized)} tangent of the map $\Rcal_Q$, \bl{\emph{cf.} definition \eqref{eq:dcoo}}.
Let us fix the first argument of $\dr_Q$ so that, for any $U, V\in\Mcal$,
the tangent map $\drq{U}:\TM{Q}\rightarrow\TM{\Rcal_Q(U)}$
is defined as
$\drq{U}(V) = \dr_Q(U,V)$. Assume that
the local rigidity condition $\drq{0}=\Id_{\TM{Q}}$ is satisfied.
Under these assumptions, $\Rcal$ is a retraction and,
instead of solving the evolution problem \eqref{eq:dynNbasis} for $U$, one can
derive the local behavior of $U$ in a neighborhood of $Q$ by
evolving $V(t)$ in \eqref{eq:Usigma} in the tangent space of $\Mcal$ at $Q$.
Indeed, using \eqref{eq:dynNbasis} we can derive an evolution equation for $V(t)$ as 
\begin{equation*}
	\dot{U}(t) = \drq{V(t)}(\dot{V}(t))=\Fcal\big(\Rcal_Q(V(t))\big).
\end{equation*}
By the continuity of $V$ and the local rigidity condition, the map
$\drq{V(t)}$ is invertible for sufficiently small $t$
(i.e., $V(t)$ sufficiently close to $0\in\TM{Q}$) and hence
\begin{equation}\label{eq:EvolTM}
	\dot{V}(t) = \left(\drq{V(t)}\right)^{-1}\Fcal\big(\Rcal_Q(V(t))\big).
\end{equation}
Since the initial condition is $U(t_0)=Q$ it holds $V(t_0)=0\in\TM{Q}$.

This strategy allows to solve the ODE \eqref{eq:EvolTM} on the tangent space $\TM{}$,
which is a linear space,
with a standard temporal integrator and then
recover the approximate solution on
the manifold $\Mcal$ via the retraction map as in \eqref{eq:Usigma}.
If the retraction map can be computed exactly, this approach yields, by construction, a structure-preserving discretization.
The key issue here
is to build a suitable smooth retraction $\Rcal:\TM{}\rightarrow \Mcal$ such that
its evaluation and the computation of the inverse of its tangent map
can be performed exactly at a
computational cost that depends only linearly on the dimension of the full model.

In order to locally solve the evolution problem \eqref{eq:EvolTM} on the tangent space to the manifold $\Mcal$
at a point $Q\in\Mcal$ we
follow a similar approach to the one proposed in \cite{CO03} for the solution of differential equations on the Stiefel manifold.
\bl{Observe that, for any $Q\in\Mcal$, the velocity field $\Fcal(Q)$ in \eqref{eq:Fcal}, which describes the flow of the reduced basis on the manifold $\Mcal$, belongs to the space $\TsM{Q}$ defined in \eqref{eq:HU}.}
We thus construct a retraction $\Rcal_Q:\TsM{Q}\rightarrow\Mcal$ as composition of three \bl{functions}: a linear map $\Upsilon_Q$ from the \bl{space $\TsM{Q}$} to the Lie algebra $\fr{g}_\Nf$
associated with the Lie group $\Gcal_{\Nf}$ acting on the manifold $\Mcal$,
the Cayley transform \eqref{eq:cay} as coordinate map from the Lie algebra to the Lie group and the group action
$\Lambda:\Gcal_{\Nf}\times\Mcal\rightarrow\Mcal$,
\begin{equation*}
	\Lambda(G,Q) = \Lambda_Q(G) = G Q, \qquad \Lambda_Q: \Gcal_{\Nf}\longrightarrow\Mcal,
\end{equation*}
that we take to be the matrix multiplication.
This is summarized
in the diagram below,
\begin{center}
\begin{tikzpicture}[scale=1.2]

\node at (0,2) {$\T{}{\Gcal_{\Nf}}$}; 
\node at (2,2) {$ \fr{g}_{\Nf}$}; 
\node at (4,2) {$ \Gcal_{\Nf}$}; 
\node at (2,0) {\bl{$\TsM{Q}$}}; 
\node at (4,0) {$\Mcal$}; 

\draw[<-,dashed] (0.45,2) -- (1.65,2);
\node[left] at (1.5,2.2) {$\dcay$}; 
\draw[->] (2.45,2) -- (3.65,2);
\node[left] at (3.4,2.2) {$\cay$}; 

\draw[->] (4,1.75) -- (4,0.25);
\node[left] at (4.6,1) {$\Lambda_Q$}; 

\draw[->,dashed] (0,1.8) -- (1.8,0.25);
\node[left] at (0.85,0.9) {$\mathrm{d}\Lambda_Q$}; 

\draw[->] (1.9,1.75) -- (1.9,0.25);
\node[left] at (1.9,1) {$\Psi_Q$}; 
\draw[->] (2.1,0.25) -- (2.1,1.75);
\node[right] at (2.1,1) {$\Upsilon_Q$};

\draw[->] (2.45,0) -- (3.65,0);
\node[right] at (2.85,0.2) {$\Rcal_Q$};  

\end{tikzpicture}
\end{center}
%
In more details, we take $\Upsilon_Q$ to be,
for each $Q\in\Mcal$, the linear map $\Upsilon_Q: \bl{\TsM{Q}\subset\TM{Q}}\rightarrow \fr{g}_{\Nf}$ such
that $\Psi_Q\circ \Upsilon_Q = \Id_{\bl{\TsM{Q}}}$, where
$\Psi_Q={\bl{\mathrm{d}}\Lambda_Q}_{\big|_{e}}\circ\dcay_{0}$,
\bl{and 
$\TM{Q}  = \{V\in\R{\Nf}{\Nr}:\;Q^\top V\in \fr{g}_{\Nr}\}.$
The space $\TM{Q}$} can be characterized as follows.
\begin{proposition}
Let $Q\in\Mcal$ be arbitrary. Then, $V\in\TM{Q}$ if and only if
\begin{equation*}
\exists\, \Theta\in\R{\Nf}{\Nr}\;\mbox{ with }\;Q^\top \Theta\in \fr{sp}(\Nr)
	\quad\mbox{such that }\quad V=(\Theta Q^\top-Q\Theta^\top)Q.
\end{equation*}
\end{proposition}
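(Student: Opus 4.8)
The plan is to prove both inclusions by direct computation, reducing everything to the identity $Q^\top V = Q^\top\Theta - \Theta^\top Q$ (valid because $Q^\top Q = \Idm_{\Nr}$ for $Q\in\Mcal$) together with the algebraic fact that the space $\fr{sp}(\Nr)$ of Hamiltonian matrices is invariant under transposition. Throughout I would abbreviate $P:=Q^\top\Theta$ and $A:=Q^\top V$, and recall the definition $\TM{Q}=\{V\in\R{\Nf}{\Nr}:\;Q^\top V\in\fr{g}_{\Nr}\}$ with $\fr{g}_{\Nr}=\fr{so}(\Nr)\cap\fr{sp}(\Nr)$.

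For the ``if'' direction I would start from a given $\Theta$ with $P=Q^\top\Theta\in\fr{sp}(\Nr)$ and set $V:=(\Theta Q^\top-Q\Theta^\top)Q$. Expanding and using $Q^\top Q=\Idm_{\Nr}$ gives $Q^\top V=P-P^\top$, which is manifestly skew-symmetric, hence in $\fr{so}(\Nr)$. To reach $Q^\top V\in\fr{g}_{\Nr}$ it then suffices to check $P-P^\top\in\fr{sp}(\Nr)$, and since $\fr{sp}(\Nr)$ is a vector space this follows once I show $P^\top\in\fr{sp}(\Nr)$ whenever $P\in\fr{sp}(\Nr)$. This in turn comes from the relation $P^\top=\J{\Nr}P\J{\Nr}$, itself a consequence of $P\J{\Nr}+\J{\Nr}P^\top=0$ together with $\J{\Nr}^\top=\J{\Nr}^{-1}=-\J{\Nr}$ and $\J{\Nr}^2=-\Idm_{\Nr}$. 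Thus $Q^\top V\in\fr{g}_{\Nr}$ and $V\in\TM{Q}$.

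For the ``only if'' direction, given $V\in\TM{Q}$ so that $A=Q^\top V\in\fr{g}_{\Nr}$, I would make the explicit choice $\Theta:=(\Idm_{\Nf}-\tfrac12 QQ^\top)V=V-\tfrac12 QA$, which removes half of the component of $V$ along the range of $Q$. Then $Q^\top\Theta=A-\tfrac12 A=\tfrac12 A$, and this lies in $\fr{sp}(\Nr)$ because $A\in\fr{g}_{\Nr}\subseteq\fr{sp}(\Nr)$, so the constraint $Q^\top\Theta\in\fr{sp}(\Nr)$ holds. Finally I would verify the representation $(\Theta Q^\top-Q\Theta^\top)Q=\Theta-Q(Q^\top\Theta)^\top=V-\tfrac12 QA-\tfrac12 QA^\top$; since $A$ is skew-symmetric ($A^\top=-A$), the two half-contributions along $Q$ cancel and this collapses to $V$, as required.

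All the computations are elementary, so there is no genuinely hard step; the only point demanding care is the invariance of $\fr{sp}(\Nr)$ under transposition, which is exactly what upgrades the manifestly skew-symmetric matrix $P-P^\top$ to a Hamiltonian one. I would also note in passing that the witness $\Theta$ is highly non-unique — one may add any $QS$ with $S$ symmetric and Hamiltonian without altering $V$ — but only existence is needed for the stated equivalence.
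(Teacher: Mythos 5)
Your proof is correct and follows essentially the same route as the paper: the same pivotal identity $Q^\top V=Q^\top\Theta-\Theta^\top Q$ drives the ``if'' direction, and your witness $\Theta=V-\tfrac12 QQ^\top V$ is exactly the paper's $\Theta=V+Q\big(S-\tfrac12 Q^\top V\big)$ specialized to $S=0$ (the paper keeps $S\in\sym(\Nr)\cap\fr{sp}(\Nr)$ arbitrary precisely to record the non-uniqueness you mention in passing). The only cosmetic difference is that you package the Hamiltonian-ness of $P-P^\top$ via closure of $\fr{sp}(\Nr)$ under transposition, where the paper verifies $Q^\top V\J{\Nr}=-\J{\Nr}V^\top Q$ directly; both computations are equivalent.
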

\begin{proof}
($\Longleftarrow$) Assume that $V\in\R{\Nf}{\Nr}$ is of the form 
$V=(\Theta Q^\top-Q\Theta^\top)Q$ for some $\Theta\in\R{\Nf}{\Nr}$ with $Q^\top \Theta\in \fr{sp}(\Nr)$.
To prove that $V\in\TM{Q}$, we verify that $Q^\top V\in \fr{g}_{\Nr}$.
Using the orthogonality of $Q$, and the assumption $Q^\top \Theta\in \fr{sp}(\Nr)$ results in
\begin{equation*}
\begin{aligned}
	& Q^\top V = Q^\top (\Theta Q^\top-Q\Theta^\top)Q = 
	- Q^\top (Q \Theta^\top-\Theta Q^\top)Q = -V^\top Q.\\
	& Q^\top V\J{\Nr} = (Q^\top \Theta -\Theta^\top Q)\J{\Nr} = -\J{\Nr}(\Theta^\top Q-Q^\top \Theta)=-\J{\Nr}V^\top Q.
\end{aligned}
\end{equation*}
($\Longrightarrow$) Let $V\in\TM{Q}$, i.e. $Q^\top V\in\fr{g}_{\Nr}$.
Let $\Theta:=V+Q\big(S-\frac{Q^\top V}{2}\big)$ with $S\in\sym(\Nr)\cap\fr{sp}(\Nr)$ arbitrary.
We first verify that $Q^\top \Theta\in \fr{sp}(\Nr)$.
Using the orthogonality of $Q$, the fact that $V\in\TM{Q}$ and $S\in\fr{sp}(\Nr)$ results in
\begin{equation*}
	Q^\top \Theta\J{\Nr}+\J{\Nr} \Theta^\top Q
		= \dfrac{Q^\top V}{2}\J{\Nr} + \J{\Nr}\dfrac{V^\top Q}{2}+S\J{\Nr}+\J{\Nr} S^\top
		= S\J{\Nr}+\J{\Nr} S^\top =0.
\end{equation*}
We then verify that, with the above definition of $\Theta$, the matrix $(\Theta Q^\top-Q\Theta^\top)Q=\Theta - Q\Theta^\top Q$ coincides with $V$.
Using the fact that $S\in\sym(\Nr)$ and $V\in\TM{Q}$ yields
\begin{equation}\label{eq:idF}
\begin{aligned}
	\Theta - Q\Theta^\top Q
		& = V +	QS - Q\dfrac{Q^\top V}{2} - Q V^\top Q - Q\bigg(S^\top-\dfrac{V^\top Q}{2}\bigg)\\
		& = V - Q\dfrac{Q^\top V}{2} - Q\dfrac{V^\top Q}{2}
		 = V.
\end{aligned}
\end{equation}
\end{proof}
We can therefore characterize the tangent space of the orthosymplectic matrix manifold 
as
\begin{equation*}
\begin{aligned}
	\TM{Q}=\{V&\, \in\R{\Nf}{\Nr}:\;  V=(\Theta_Q^S(V) Q^\top-Q\Theta_Q^S(V)^\top)Q,\\
		& \;\mbox{with}\;
		\Theta_Q^S(V):=V+Q\bigg(S-\frac{Q^\top V}{2}\bigg),\;\mbox{for}\,S\in\sym(\Nr)\cap\fr{sp}(\Nr)\}.
\end{aligned}
\end{equation*}
This suggests that the linear map $\Upsilon_Q$ can be defined as
\begin{equation}\label{eq:defA}
\begin{array}{lcll}
	\Upsilon_Q: & \bl{\TsM{Q}} & \longrightarrow & \fr{g}_\Nf,\\
	     & V & \longmapsto & \Theta_Q^S(V) Q^\top - Q \Theta_{Q}^S(V)^\top.
\end{array}
\end{equation}
Indeed, since ${\bl{\mathrm{d}}\Lambda_{Q}}_{\big|_{e}}(G) = GQ$ and $\dcay_{0}=\Idm$, it holds
$(\Psi_Q\circ \Upsilon_Q)(V)= \Upsilon_Q(V)Q = V$ for any $V\in\bl{\TsM{Q}}$.
This stems from the definition of $\Upsilon_Q$ in \eqref{eq:defA} since
\begin{equation*}
\begin{aligned}
	\Psi_Q(\Upsilon_Q(V)) & = \big({\Lambda_{Q}}_{\big|_{e}}\circ\dcay_{0}\circ \Upsilon_Q\big)(V)
			  = {\Lambda_{Q}}_{\big|_{e}}(\Upsilon_Q(V))\\
			& = \Upsilon_Q(V)Q = \big(\Theta_Q^S(V) Q^\top - Q \Theta_Q^S(V)^\top\big)\,Q = V,
\end{aligned}
\end{equation*}
where the last equality follows by \eqref{eq:idF}.	
Note that $\Psi_Q={\mathrm{d}\Lambda_Q}_{\big|_{e}}\circ\dcay_{0}$
is not injective as $\Upsilon_Q(\bl{\TsM{Q}})$ is a proper subspace of $\fr{g}_\Nf$.
\bl{Observe that, for any $V\in\TM{Q}$, it holds $\Upsilon_Q(V)=VQ^\top-QV^\top+QV^\top QQ^\top$ and, hence, $\Upsilon_Q(V)\in\fr{g}_{\Nf}$.}
\begin{proposition}
Let $\cay:\fr{g}_\Nf\rightarrow \Gcal_{\Nf}$ be the Cayley transform defined in \eqref{eq:cay}.
For any $Q\in\Mcal$ and $S\in\sym(\Nr)\cap\fr{sp}(\Nr)$, we define
\begin{equation*}
\begin{array}{lcll}
	\Theta^S_Q: & \TM{Q} & \longrightarrow & \T{Q}{\Sp(\Nr,\r{\Nf})}=\{M\in\R{\Nf}{\Nr}:\;Q^\top M\in\fr{sp}(\Nr)\}\\
	     & V & \longmapsto & V+Q\bigg(S- \dfrac12 Q^\top V\bigg).
\end{array}
\end{equation*}
Then the map $\Rcal_Q:\bl{\TsM{Q}}\rightarrow \Mcal$ defined for any $V\in\bl{\TsM{Q}}$ as
\begin{equation}\label{eq:retraction}
	\Rcal_Q(V) = \cay(\Theta^S_Q(V)Q^\top-Q \Theta^S_Q(V)^\top) Q,
\end{equation}
is a retraction.
\end{proposition}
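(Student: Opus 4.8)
The plan is to check the three defining requirements of a retraction recorded above: that $\Rcal_Q$ is a well-defined smooth map into $\Mcal$ near the origin, that it is centered, $\Rcal_Q(0)=Q$, and that it obeys the local rigidity condition $\drq{0}=\Id_{\TsM{Q}}$. Well-definedness is immediate from the ingredients already assembled. For any $V\in\TsM{Q}$ the matrix $\Upsilon_Q(V)=\Theta^S_Q(V)Q^\top-Q\Theta^S_Q(V)^\top$ equals $VQ^\top-QV^\top+QV^\top QQ^\top$ and hence lies in $\fr{g}_\Nf$, as already observed; being skew-symmetric, it renders $\Idm_\Nf-\Upsilon_Q(V)/2$ invertible, so $\cay(\Upsilon_Q(V))$ is defined and belongs to $\Gcal_\Nf$ by item (i) of the Cayley-transform lemma. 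Left multiplication by an element $G\in\Gcal_\Nf$ preserves $\Mcal$, since $G^\top G=\Idm_\Nf$ and $G^\top\J{\Nf}G=\J{\Nf}$ give $(GQ)^\top(GQ)=\Idm_\Nr$ and $(GQ)^\top\J{\Nf}(GQ)=Q^\top\J{\Nf}Q=\J{\Nr}$; therefore $\Rcal_Q(V)=\cay(\Upsilon_Q(V))Q\in\Mcal$. As $\cay$ is defined on every skew-symmetric matrix, $\Rcal_Q$ is in fact smooth on all of $\TsM{Q}$, exceeding the open-ball requirement.

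For the centering property I would evaluate at $V=0$: from $\Theta^S_Q(0)=QS$ one gets $\Upsilon_Q(0)=QSQ^\top-QS^\top Q^\top=Q(S-S^\top)Q^\top=0$, the cancellation being exactly why $S$ is taken symmetric. Thus $\cay(\Upsilon_Q(0))=\cay(0)=\Idm_\Nf$ and $\Rcal_Q(0)=\Idm_\Nf\,Q=Q$.

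The substance of the proof is the local rigidity condition. Here I would take a curve $V(t)\in\TsM{Q}$ with $V(0)=0$ and $\dot V(0)=V$ and differentiate $\Rcal_Q(V(t))=\cay(\Upsilon_Q(V(t)))\,Q$ at $t=0$. Since $\Upsilon_Q$ is linear, the affine $S$-term having dropped out as above, one has $\tfrac{d}{dt}\Upsilon_Q(V(t))=\Upsilon_Q(\dot V(t))$, and the defining relation $\tfrac{d}{dt}\cay(\Omega(t))=\dcay_{\Omega(t)}(\dot\Omega(t))\cay(\Omega(t))$ of the right-trivialized tangent evaluated at $t=0$ gives $\tfrac{d}{dt}\cay(\Upsilon_Q(V(t)))|_{t=0}=\dcay_0(\Upsilon_Q(V))\,\cay(0)$. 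Using $\cay(0)=\Idm_\Nf$ together with the normalization $\dcay_0=\Idm_\Nf$ built into the definition of $\cay$ through the factor $1/2$, this reduces to $\Upsilon_Q(V)$, so that $\drq{0}(V)=\Upsilon_Q(V)Q$. The relation $\Upsilon_Q(V)Q=V$ for $V\in\TsM{Q}$, established via \eqref{eq:idF}, then yields $\drq{0}(V)=V$, i.e. $\drq{0}=\Id_{\TsM{Q}}$. Having verified all three properties, $\Rcal_Q$ is a retraction.

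I expect the only delicate points to be bookkeeping rather than conceptual: ensuring the constant $S$-term genuinely cancels so that $\Upsilon_Q$ is linear with $\Upsilon_Q(0)=0$ (without which neither the centering nor the clean derivative at the origin would follow), and correctly chaining the two normalizations $\cay(0)=\Idm_\Nf$ and $\dcay_0=\Idm_\Nf$ so that the differential collapses to $\Upsilon_Q$ before the previously proved identity $\Upsilon_Q(V)Q=V$ closes the computation.
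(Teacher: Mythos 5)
Your argument follows essentially the same route as the paper: centering via the cancellation $Q(S-S^\top)Q^\top=0$, and local rigidity via the factorization $\Rcal_Q=\Lambda_Q\circ\cay\circ\Upsilon_Q$ together with the normalizations $\cay(0)=\Idm_{\Nf}$, $\dcay_0=\Idm$ and the previously established identity $\Upsilon_Q(V)Q=V$. Your verification that $\Rcal_Q$ actually lands in $\Mcal$ (left multiplication by an element of $\Gcal_{\Nf}$ preserves both the orthogonality and the symplecticity constraints) is in fact more explicit than the paper's, which dismisses this with ``it can be easily verified.''

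There is, however, one condition you do not address. The paper's working definition of a retraction, stated just before \eqref{eq:Usigma}, requires not only $\Rcal_Q(0)=Q$ but the full equivalence $\Rcal_Q(V)=Q$ \emph{if and only if} $V=0\in\TsM{Q}$; your list of ``three defining requirements'' silently replaces this by the forward implication alone. The paper proves the converse direction by observing that $\Upsilon_Q$ is injective (it admits $\Psi_Q$ as a left inverse), so that $\Rcal_Q(V)=Q$ forces $\cay(\Upsilon_Q(V))=\Idm_{\Nf}$ and hence, since $\cay(\Omega)=\Idm$ only for $\Omega=0$ in $\fr{g}_{\Nf}$, $\Upsilon_Q(V)=0$ and finally $V=0$. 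This is a short argument, and one could alternatively extract a purely local version from the rigidity condition via the inverse function theorem, but as written your proof does not establish this part of the definition; adding the two-line injectivity argument closes the gap.
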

\begin{proof} We follow \cite[Proposition 2.2]{CO03}.
Let $V=0\in \TM{Q}$, then $\Theta^S_Q(0) = QS$ and then, using the fact that $S\in\sym(\Nr)$
and $\cay(0)=\Idm_\Nf$, it holds
$\Rcal_Q(0) = \cay\big(Q(S-S^\top)Q^\top\big)Q = \cay(0)Q=Q$.

Let $\Upsilon_Q$ be defined as in \eqref{eq:defA}.
Since, by construction $\Upsilon_Q$ admits left inverse it is injective and then $\Upsilon_Q(V)=0$ if and only if $V=0\in\bl{\TsM{Q}}$.
Then, $\Rcal_Q(V)=Q$ if and only if $\cay(\Upsilon_Q(V)) =\Idm_\Nf$, which implies $V=0\in\bl{\TsM{Q}}$.
%
Moreover, since $\Rcal_Q = \Lambda_Q\circ \cay\circ \Upsilon_Q$, the definition of group action
and the linearity of $\Upsilon$ result in
$\drq{0}=\Psi_Q\circ \Upsilon_Q = \Id_{\bl{\TsM{Q}}}.$
\bl{It can be easily verified that $\Rcal_Q(V)\in\Mcal$ for any $V\in\TsM{Q}$.}
\end{proof}
Note that the matrix $S\in\sym(\Nr)\cap\fr{sp}(\Nr)$ in the definition of the retraction \eqref{eq:retraction}
is of the form
\begin{equation*}
S = 
\begin{pmatrix}
A & B\\
B & -A	
\end{pmatrix},\qquad\qquad\mbox{with}\quad A, B\in \sym(\Nrh).
\end{equation*}
Its choice affects the numerical performances of the algorithm for the computation of the retraction and its inverse tangent
map, as pointed out in \cite[Section 3]{CO03}.

In the following Subsections we propose a temporal discretization
of \eqref{eq:EvolTM} with
an $\Ns$-stage explicit Runge--Kutta method and show that the resulting algorithm
has arithmetic complexity of order $C_{\Fcal}+O(\Nfh\Nrh^2)$ at every stage of the temporal solver.

\subsubsection{Efficient computation of retraction and inverse tangent map}
In the interval $(t^m,t^{m+1}]$ the local evolution on the tangent space,
corresponding to \eqref{eq:EvolTM}, reads
\begin{equation*}
	\dot{V}(t) = \bigg({\dr_{U_m}}_{\big|_{V(t)}}\bigg)^{-1}\Fcal\big(\Rcal_{U_m}(V(t))\big) =:f_m(V(t)).
\end{equation*}
Let $(b_i,a_{i,j})$ for $i=1,\ldots,s$ and $j=1,\ldots,i-1$
be the coefficients of the Butcher tableau describing the $\Ns$-stage explicit Runge--Kutta method.
Then the numerical approximation of \eqref{eq:EvolTM}-\eqref{eq:Usigma} with
$U_0:=Q\in\Mcal$ and $V_0=0\in\TM{Q}$ is given in Algorithm~\ref{algo:RKMKtang}.
\begin{algorithm}
\caption{Explicit RK-tangent scheme in $(t^m,t^{m+1}]$}\label{algo:RKMKtang}
\bl{\normalsize
\begin{algorithmic}[1]
 \Require{$U_m\in\Mcal$, $\{b_i\}_{i=1}^{\Ns}$, $\{a_{i,j}\}_{i,j=1}^{\Ns}$ }
 \State $A_m^1=\Fcal(U_m)$
 \For{$i=2,\ldots,\Ns$}
 \State $A_m^i = f_m\bigg(\dt\sum\limits_{j=1}^{i-1} a_{i,j}A_m^j\bigg)$,
 \EndFor
 \State $V_{m+1} = \dt\sum\limits_{i=1}^\Ns b_iA^i_m$,
 \State \Return $U_{m+1} = \Rcal_{U_m}(V_{m+1})\in\Mcal$
\end{algorithmic}}
\end{algorithm}
%
Other than the evaluation of the velocity field $\Fcal$ at 
$\Rcal_{U_m}(V)$,
the crucial points of Algorithm~\ref{algo:RKMKtang} in terms of computational cost,
are the evaluation of the retraction and the computation of its inverse tangent map.
If we assume that both operations can be performed with a computational cost of order $O(\Nfh\Nrh^2)$,
then Algorithm~\ref{algo:RKMKtang} has an overall
arithmetic complexity of order $O(\Nfh\Nrh^2\Ns)+C_{\Fcal}\Ns$,
where $C_{\Fcal}$ is the cost to compute $\Fcal(U)$ in \eqref{eq:Fcal} at any given $U\in\Mcal$.

\medskip
\noindent
\textbf{Computation of the retraction.}
A standard algorithm to compute the retraction $\Rcal_Q$ \eqref{eq:retraction} at the matrix $V\in\R{\Nf}{\Nr}$
requires $O(\Nfh^2\Nrh)$ for the multiplication between $\cay(\Upsilon_Q(V))$ and $Q$,
plus the computational cost to evaluate the Cayley transform at $\Upsilon_Q(V)\in\R{\Nf}{\Nf}$.
However, 
\bl{for any $V\in\TsM{Q}$,}
the matrix $\Upsilon_Q(V)\in \fr{g}_{\Nf}$
admits the low-rank splitting
%
\begin{equation*}
	\Upsilon_Q(V) = \Theta^S_Q(V)Q^\top - Q \Theta^S_Q(V)^\top = \alpha\beta^\top,
\end{equation*}
where
\begin{equation}\label{eq:AB}
	\alpha := \big[\,\Theta^S_Q(V) \,|\, {-Q}\,\big]\in\R{\Nf}{4\Nrh},\qquad
	\beta  := \big[\,Q \,|\, \Theta^S_Q(V)\,\big]\in\R{\Nf}{4\Nrh}.
\end{equation}
We can revert to the results of Proposition \ref{prop:Nn2} (with $\Nm=4\Nrh$) so that
the retraction \eqref{eq:retraction} can be computed as
\begin{equation*}
	\Rcal_Q(V)=\cay(\Upsilon_Q(V)) Q
			 = Q + \alpha (\beta^\top\alpha)^{-1}\big(\cay(\beta^\top\alpha)-\Idm_{4\Nrh}\big)\beta^\top Q,
\end{equation*}
with computational cost of order $O(\Nfh\Nrh^2)$.

\medskip
\noindent
\textbf{Computation of the inverse tangent map of the retraction.}
Let $Q\in\Mcal$ and $V\in\bl{\TsM{Q}}$. 
Using the definition of retraction \eqref{eq:retraction} we have
\begin{equation*}
	\rqv = \cay(\Upsilon_Q(V))Q = (\Lambda_Q\circ\cay\circ \Upsilon_Q)(V).
\end{equation*}
Then, the tangent map $\dr_Q$ reads
\begin{equation*}
	\dr_Q = \bl{\mathrm{d}}\Lambda_Q\circ\dcay\circ \bl{\mathrm{d}}\Upsilon_Q:T\bl{\TsM{Q}}\longrightarrow\T{}{\fr{g}_\Nf}\cong\fr{g}_\Nf\longrightarrow\T{}{\Gcal_{\Nf}}\longrightarrow \TM{Q}.
\end{equation*}
Fixing the fiber on $T\bl{\TsM{Q}}$ corresponding to $V\in\bl{\TsM{Q}}$
results in
\begin{equation*}
\begin{aligned}
	\drq{V}(\vt) & = \dr_Q(V,\vt)
		 = {\bl{\mathrm{d}}\Lambda_Q}_{\big|_{\cay(\Upsilon_Q(V))}}\circ \dcay_{\Upsilon_Q(V)}(\Upsilon_Q(\vt))\\
		& = \dcay_{\Upsilon_Q(V)}(\Upsilon_Q(\vt))\,\cay(\Upsilon_Q(V))Q = \dcay_{\Upsilon_Q(V)}(\Upsilon_Q(\vt))\,\rqv,
\end{aligned}
\end{equation*}
where we have used the linearity of the map $\Upsilon_Q$.

Assume we know $W\in\bl{\TsM{\rqv}}$.
We want to compute $\vt\in\bl{\TsM{Q}}$ such that
\begin{equation}\label{eq:invR}
	\drq{V}(\vt) = \dcay_{\Upsilon_Q(V)}(\Upsilon_Q(\vt))\,\rqv = W.
\end{equation}

It is possible to solve problem \eqref{eq:invR} with
arithmetic complexity $O(\Nfh\Nrh^2)$ by proceeding as in
\cite[Section 3.2.1]{CO03}.
Since, for our algorithm, the result of \cite{CO03} can be extended to the case of arbitrary
matrix $S\in\sym(\Nr)\cap\fr{sp}(\Nr)$ in \eqref{eq:retraction},
we report the more general derivation in Appendix \ref{app:RtangInv}.
\bl{Note that, for $S=0$ and explicit Euler scheme, the two Algorithms \ref{algo:RKMK} and
\ref{algo:RKMKtang} are equivalent.}

\subsubsection{Convergence estimates for the tangent method}
Since the retraction and its inverse tangent map in Algorithm~\ref{algo:RKMKtang} can be computed exactly,
the smoothness properties of $\Rcal$ allow to derive error estimates for the
approximate reduced basis in terms of the numerical solution of the evolution problem \eqref{eq:EvolTM}
in the tangent space.
\begin{proposition}
The retraction map $\Rcal:\TM{}\rightarrow\Mcal$ defined in \eqref{eq:retraction}
is locally Lipschitz continuous in the Frobenius $\norm{\cdot}$-norm, namely for any $Q\in\Mcal$,
$\Rcal_Q:\bl{\TsM{Q}}\rightarrow\Mcal$ satisfies
\begin{equation*}
	\norm{\Rcal_Q(V)-\Rcal_Q(W)}\leq 3\norm{V-W},
	\qquad\forall\, V,\, W\in\bl{\TsM{Q}}.
\end{equation*}
\end{proposition}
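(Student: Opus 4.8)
The plan is to peel off the three maps composing the retraction $\Rcal_Q=\Lambda_Q\circ\cay\circ\Upsilon_Q$ one at a time, exploiting that both the group action $\Lambda_Q$ (right multiplication by $Q$) and the Cayley image are non-expansive in the appropriate sense, while $\Upsilon_Q$ is linear with an explicitly bounded norm. First I would write, for $V,W\in\TsM{Q}$,
\begin{equation*}
	\Rcal_Q(V)-\Rcal_Q(W) = \big(\cay(\Upsilon_Q(V))-\cay(\Upsilon_Q(W))\big)Q.
\end{equation*}
Since $Q\in\Mcal$ has orthonormal columns its spectral norm is $\norm{Q}_2=1$, and the submultiplicativity $\norm{MQ}\le\norm{M}\,\norm{Q}_2$ of the Frobenius norm against the spectral norm gives
\begin{equation*}
	\norm{\Rcal_Q(V)-\Rcal_Q(W)}\le\norm{\cay(\Upsilon_Q(V))-\cay(\Upsilon_Q(W))}.
\end{equation*}

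Next I would bound the Cayley difference. Writing $\Omega_1:=\Upsilon_Q(V)$ and $\Omega_2:=\Upsilon_Q(W)$, both in $\fr{g}_\Nf$, and using the representation $\cay(\Omega)=2(\Idm-\Omega/2)^{-1}-\Idm$ from \eqref{eq:cay1} together with the resolvent identity $A^{-1}-B^{-1}=A^{-1}(B-A)B^{-1}$, I obtain
\begin{equation*}
	\cay(\Omega_1)-\cay(\Omega_2)=\Big(\Idm-\tfrac{\Omega_1}{2}\Big)^{-1}(\Omega_1-\Omega_2)\Big(\Idm-\tfrac{\Omega_2}{2}\Big)^{-1}.
\end{equation*}
The key estimate is that each resolvent is non-expansive: for skew-symmetric $\Omega$ one has $\norm{(\Idm-\Omega/2)x}^2=\norm{x}^2+\tfrac14\norm{\Omega x}^2\ge\norm{x}^2$, whence the smallest singular value of $\Idm-\Omega/2$ is at least $1$ and $\norm{(\Idm-\Omega/2)^{-1}}_2\le 1$. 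Combining this with $\norm{AXB}\le\norm{A}_2\,\norm{X}\,\norm{B}_2$ and the linearity of $\Upsilon_Q$ yields
\begin{equation*}
	\norm{\cay(\Omega_1)-\cay(\Omega_2)}\le\norm{\Omega_1-\Omega_2}=\norm{\Upsilon_Q(V-W)}.
\end{equation*}

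Finally I would estimate $\Upsilon_Q$ on $\TsM{Q}$. Using the explicit form $\Upsilon_Q(D)=DQ^\top-QD^\top+QD^\top QQ^\top$, valid for $D\in\TM{Q}\supseteq\TsM{Q}$ and recorded just before the statement, together with $\norm{Q}_2=1$ and $\norm{QQ^\top}_2=1$, the triangle inequality bounds each of the three summands by $\norm{D}$, giving $\norm{\Upsilon_Q(D)}\le 3\norm{D}$. Taking $D=V-W$ and chaining the three displays produces the claimed constant $3$. The main obstacle—indeed essentially the only nontrivial point—is the non-expansiveness of the Cayley resolvents; everything else is submultiplicativity of the norms and the linearity of $\Upsilon_Q$. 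I note in passing that on $\TsM{Q}$ the constraint $Q^\top D=0$ kills the term $QD^\top QQ^\top$, so in fact $\Upsilon_Q(D)=DQ^\top-QD^\top$ and the sharper bound $\norm{\Upsilon_Q(D)}\le 2\norm{D}$ holds; I would nonetheless retain the weaker factor $3$ to match the stated inequality without invoking the horizontal constraint.
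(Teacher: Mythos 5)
Your proposal is correct and follows essentially the same route as the paper: both arguments reduce the claim to the non-expansiveness of the resolvent $(\Idm-\Omega/2)^{-1}$ for skew-symmetric $\Omega$ (the paper via normality and spectral radius, you via a singular-value lower bound) combined with the factor-$3$ bound $\norm{\Upsilon_Q(V)-\Upsilon_Q(W)}\leq 3\norm{V-W}$. The only cosmetic difference is that you use the explicit two-resolvent difference formula for $\cay(\Omega_1)-\cay(\Omega_2)$, whereas the paper derives an implicit relation and compensates the factor $1/2$ with $\norm{Q+\Rcal_Q(W)}_2\leq 2$; these are the same computation written differently, and your closing remark that the horizontality constraint actually yields a sharper constant is a valid observation.
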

\begin{proof}
Let $U := \Rcal_Q(V) = \cay(\Upsilon_Q(V))Q$ and
$Y :=\Rcal_Q(W) = \cay(\Upsilon_Q(W))Q$.
Using the definition of Cayley transform \eqref{eq:cay}
we have, for $\overline{\Upsilon}_Q(\cdot):=\Upsilon_Q(\cdot)/2$,
\begin{equation*}
\begin{aligned}
0	& = \big(\Idm_{\Nf}-\aqvb\big)U     - \big(\Idm_{\Nf}-\overline{\Upsilon}_Q(W)\big)Y 
	 - \big(\Idm_{\Nf}+\aqvb\big)Q     - \big(\Idm_{\Nf}+\overline{\Upsilon}_Q(W)\big)Q\\
	& = \big(\Idm_{\Nf}-\aqvb\big)(U-Y) - \big(\aqvb-\overline{\Upsilon}_Q(W)\big)(Q+Y).
\end{aligned}
\end{equation*}
Since $\Upsilon_Q$ is skew-symmetric $\big(\Idm_{\Nf}-\aqvb\big)^{-1}$ is normal.
Then,
$$\norm{\big(\Idm_{\Nf}-\aqvb\big)^{-1}}_2=\rho\big[\big(\Idm_{\Nf}-\aqvb\big)^{-1}\big]\leq 1.$$
Hence, since $Q$ and $Y$ are (semi-)orthogonal matrices, it holds
\begin{equation*}
\norm{U-Y} \leq \norm{\big(\Idm_{\Nf}-\aqvb\big)^{-1}}_2\norm{\Upsilon_Q(V)-\Upsilon_Q(W)}
		\leq \norm{\Upsilon_Q(V)-\Upsilon_Q(W)}.
\end{equation*}
Using the definition of $\Upsilon_Q$ from \eqref{eq:defA} results in
\begin{equation*}
\norm{\Upsilon_Q(V)-\Upsilon_Q(W)}  = \norm{(V-W)Q^\top - Q(V-W) + Q(V^\top-W^\top)QQ^\top}
	 \leq 3\norm{V-W}.
\end{equation*}
\end{proof}
It follows that the solution of Algorithm~\ref{algo:RKMKtang}ref{eq:cayRKtang} can be computed with the same order of
accuracy of the RK temporal scheme.
\begin{corollary}
For $Q\in\Mcal$ given, let $\Rcal_Q$ be the retraction map defined in \eqref{eq:retraction}.
Let $U(t^m)=\Rcal_Q(V(t^m))$, where
$V(t^m)$ is the exact solution of \eqref{eq:EvolTM} at a given time $t^m$
and let $U_m=\Rcal_Q(V_m)$, where $V_m$ is the numerical solution of \eqref{eq:EvolTM} at time $t^m$
obtained with Algorithm~\ref{algo:RKMKtang}.
Assume that the numerical approximation of the evolution equation for the unknown $V$ on the tangent space of $\Mcal$
is of order $O(\dt^k)$.
Then, it holds
\begin{equation*}
	\norm{U(t^m)-U_m} =O(\dt^k).
\end{equation*}
\end{corollary}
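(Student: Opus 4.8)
The plan is to transfer the convergence order from the tangent-space variable $V$ to the manifold variable $U$ by invoking the Lipschitz continuity of the retraction established in the preceding Proposition. The guiding observation is that, since $\Rcal_Q$ and its inverse tangent map are evaluated \emph{exactly} in Algorithm~\ref{algo:RKMKtang}, the sole source of discretization error is the Runge--Kutta approximation of the ODE \eqref{eq:EvolTM} for $V$ on the linear space $\TsM{Q}$, whose accuracy is governed by the order-$k$ hypothesis. No additional error is incurred when mapping back to $\Mcal$ through the retraction.

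First I would note that both the exact trajectory $V(t^m)$ and the numerical iterate $V_m$ lie in the tangent space $\TsM{Q}$, so the global Lipschitz estimate $\norm{\Rcal_Q(V)-\Rcal_Q(W)}\le 3\norm{V-W}$ from the preceding Proposition applies verbatim with $V=V(t^m)$ and $W=V_m$. This immediately yields
\[
	\norm{U(t^m)-U_m} = \norm{\Rcal_Q(V(t^m))-\Rcal_Q(V_m)} \le 3\,\norm{V(t^m)-V_m}.
\]
Next I would invoke the standing assumption that the temporal integrator applied to \eqref{eq:EvolTM} is of order $k$, namely $\norm{V(t^m)-V_m}=O(\dt^k)$. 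Substituting this into the inequality above gives $\norm{U(t^m)-U_m}\le 3\,O(\dt^k)=O(\dt^k)$, which is the assertion. The constant $3$ is inherited from the Lipschitz constant of $\Rcal_Q$ and is independent of $\dt$ and of the mesh point $t^m$.

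I do not anticipate a genuine obstacle, because the substance of the argument was already carried out in the Lipschitz proposition: the bound there is uniform over the entire linear space $\TsM{Q}$, so no localization to a neighborhood of $0\in\TsM{Q}$ is required, and the exactness of the retraction together with the exact computation of the inverse tangent map guarantees that the order of the scheme is preserved under the pushforward to the manifold. The only point deserving a line of care is that the order-$k$ hypothesis is an assumption on the numerical solution of the tangent-space ODE with the well-defined right-hand side $f_m$ of \eqref{eq:EvolTM}; since the local rigidity condition $\drq{0}=\Id_{\TsM{Q}}$ ensures $\dr_{U_m}$ is invertible for $V$ near $0$, the scheme is meaningful and the hypothesis is attainable for sufficiently small $\dt$, so the estimate closes without further work.
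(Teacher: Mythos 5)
Your proposal is correct and is precisely the argument the paper intends: the corollary is stated as an immediate consequence of the preceding Lipschitz proposition, obtained by composing the bound $\norm{\Rcal_Q(V(t^m))-\Rcal_Q(V_m)}\le 3\norm{V(t^m)-V_m}$ with the assumed order-$k$ accuracy of the tangent-space integrator. Your additional observation that the bound needs no localization (since $\Upsilon_Q(V)$ is skew-symmetric, the Cayley transform and hence $\Rcal_Q$ are defined on all of $\TsM{Q}$ with the uniform constant $3$) is a correct and slightly more careful reading than the paper's own phrasing of "locally Lipschitz."
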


\bl{
\section{Numerical experiment}\label{sec:numExp}
To gauge the performances of the proposed 
method, we consider the numerical simulation of the
finite-dimensional parametrized Hamiltonian system arising from the spatial approximation of the one-dimensional shallow water equations (SWE).
The shallow water equations are used in oceanography to describe the kinematic behavior of thin inviscid single fluid layers flowing over a changing topography.
Under the assumptions of irrotational flow and flat bottom topography, the fluid is described by
the scalar potential $\phi$ and the height $h$ of the free-surface, normalized by its mean value, via the nonlinear system of PDEs
\begin{equation}\label{eq:SWE}
    \left\{
    \begin{aligned}
    & \partial_t h + \partial_x (h\,\partial_x \phi)=0, &\qquad \mbox{in}\;(-L,L)\times (0,T],\\
    & \partial_t \phi + \dfrac12|\partial_x \phi|^2 + h =0, &\qquad \mbox{in}\;(-L,L)\times (0,T],
    \end{aligned}\right.
\end{equation}
where $L=10$, $T=7$, $h,\phi:[-L,L]\times(0,T]\times\Sprm\rightarrow \mathbb{R}$ are the state variables, and $\Sprm\subset\r{2}$ is a compact set of parameters.
Here we consider $\Sprm := \left [0.1,0.15 \right ] \times \left [ 0.2,1.5 \right ]$.
The system is provided with periodic boundary conditions for both state variables, and with parametric initial conditions
$(h^{0}(x;\prm),\phi^{0}(x;\prm)) = (1+\alpha e^{-\beta x^2},0)$,
where $\alpha$ controls the amplitude of the initial hump in the depth, $\beta$ describes its width, and $\prm=(\alpha,\beta)$.

For the numerical discretization in space, we consider
a Cartesian mesh on $[-L,L)$ with $N{-}1$ equispaced intervals
and we denote with $\Delta x$ the mesh width.
The degrees of freedom of the problem are the
nodal values of the height and potential, i.e.
$(h_h(t;\prm),\phi_h(t;\prm))=(h_1,\dots,h_N,\phi_1,\dots,\phi_N)$.
The discrete set of parameters $\Sprmh$ is obtained by uniformly sampling $\Sprm$ with $10$ samples per dimension, for a total of $\Np=100$ different configurations.
This implies that the full model variable $\Rcal$
in \eqref{eq:dynNR} is the $\Nfh\times\Np$ matrix
given by
$\Rcal_{i,k}(t)=h_i(t;\prmh^k)$ if $1\leq i\leq \Nfh$ and
$\Rcal_{i,k}(t)=\phi_{i-N}(t;\prmh^k)$ if $\Nfh{+}1\leq i\leq \Nf$,
for any $k\in\{1,\ldots,\Np\}$,
where $\prmh^k$ denotes the $k$-th entry of the vector $\prmh\in\r{\Np}$ containing the samples of the parameters.
We consider second order accurate centered finite difference schemes to discretize the first order spatial derivative in \eqref{eq:SWE}.
The evolution problem \eqref{eq:SWE}
admits a canonical symplectic Hamiltonian. Spatial discretization with centered finite differences yields a Hamiltonian dynamical system where the Hamiltonian
associated with the $k$-th parameter is given by
\begin{equation*}
    \HamN_k(\Rcal(t)) = \dfrac12 \sum_{i=1}^{N}
    \bigg(h_i(t;\prmh^k) \left( \dfrac{\phi_{i+1}(t;\prmh^k)-\phi_{i-1}(t;\prmh^k)}{2\Delta x} \right)^2 + h_{i}^2(t;\prmh^k)\bigg).
\end{equation*}
Wave-type phenomena often exhibit a low-rank behavior only locally in time, and, hence, global (in time) model order reduction proves ineffective in these situations. We show this behavior by comparing the performances of our dynamical reduced basis method with the global symplectic reduced basis approach of \cite{PM16} based on complex SVD.
For the latter, a symplectic reduced space is obtained from the full model obtained by discretizing \eqref{eq:SWE} with centred finite differences in space, with $\Nfh=1000$, and the implicit midpoint rule in time, with $\dt=10^{-3}$. We consider snapshots every $10$ time steps and
$4$ uniformly distributed samples of $\Sprm$ per dimension.
Concerning the dynamical reduced model, we evaluate the initial condition $(h_h(0;\prmh),\phi_h(0;\prmh))$ at all values $\prmh$ and compute the matrix $\Rcal_0\in\mathbb{R}^{\Nf\times \Np}$ having as columns each of the evaluations. As initial condition for the reduced system \eqref{eq:eqU}, we use
$U(0) = U_0\in\mathbb{R}^{\Nf\times \Nr}$ obtained via complex SVD of the matrix $\Rcal_0$ truncated at $\Nrh$, while
$Z(0) = U_0^T\Rcal_0$. Then,
we solve system \eqref{eq:eqU} with
a 2-stage partitioned Runge-Kutta method obtained as follows:
the evolution equation for the coefficients $Z$ is discretized with the implicit midpoint rule; while the evolution equation \eqref{eq:dynNbasis} for the reduced basis
is solved using the tangent method described in Algorithm~\ref{algo:RKMKtang}
with the explicit midpoint scheme, i.e. $\Ns=2$,
$b_1=0$, $b_2=1$, and  $a_{1,1}=a_{1,2}=a_{2,2}=0$, $a_{2,1} = 1/2$.
Note that the resulting partitioned RK method has order of accuracy 2 and the numerical integrator for $Z$ is symplectic \cite[Section III.2]{HaLuWa06}.
Finally, the nonlinear quadratic operator in \eqref{eq:SWE}, is reduced by using tensorial techniques \cite{SSN14}.

In Figure~\ref{fig:error_SWE1D} we report the error in the Frobenius norm, at final time, between the full model solution and the reduced solution obtained with the two different approaches and various dimensions of the reduced space.
Note that the runtime includes also the offline phase for the global approach.}
%
%

\begin{figure}[H]
\centering
\begin{tikzpicture}[scale=0.9]
    \begin{axis}[xlabel= {runtime $[s]$},
    			 ylabel={$\norm{\Rcal(T)-R(T)}$},
                 grid=minor,
                 xmode=log,
                 ymode=log,
                 xmax = 11000,
                 ymin=0.00003,
                 ymax = 0.04,
                 xlabel style={font=\normalsize},
                 ylabel style={at={(0.04,0.5)},font=\normalsize},
                 x tick label style={font=\small},
                 y tick label style={font=\small}]
        \addplot+[mark=*,color=blue,mark size=2pt,
            every node near coord/.append style={xshift=0.65cm},
            every node near coord/.append style={yshift=-0.2cm},
            nodes near coords, 
            point meta=explicit symbolic,
            every node near coord/.append style={font=\small}] table[x=Timing,y=Final_Error, meta index=2]
            {DynRed.txt};
        \addplot+[mark=square*,color=red,mark size=2pt,
            every node near coord/.append style={xshift=0.5cm},
            every node near coord/.append style={yshift=0.05cm},
            nodes near coords, 
            point meta=explicit symbolic,
            every node near coord/.append style={font=\small}] table[x=Timing,y=Final_Error, meta index=2]
            {GloRed_method_Complex_SVD.txt};
        \addplot+[mark=none,color=black,very thick,dashed]
table[x=Timing,y=DummyError]
            {Full.txt}; 
         \legend{Dynamical RBM,
                Global RBM,
                Full model};
    \end{axis}
\end{tikzpicture}
\caption{Error between the full model solution and the reduced solution at final time vs. the algorithm runtime.}
\label{fig:error_SWE1D}
\end{figure}
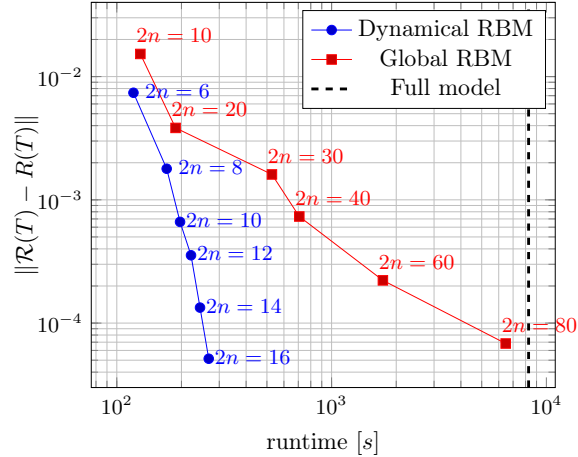
\bl{The results of Figure~\ref{fig:error_SWE1D} show that the dynamical reduced basis method outperforms the global approach by reaching comparable accuracy at a reduced computational cost. Moreover,
as the dimension of the reduced space increases, the runtime of the global method becomes comparable to the one required to solve the high-fidelity problem, meaning that there is no gain in performing global model order reduction.  

Figure~\ref{fig:error_Ham_SWE1D} shows the evolution of the error in the conservation of the discrete Hamiltonian, averaged over all $\Np$ values of the parameter.
Since the Hamiltonian is a cubic quantity, we do not expect exact conservation associated with the proposed partitioned RK scheme. In addition,
as pointed out at the end of Section~\ref{sec:SDLR},
we cannot guarantee exact preservation of the invariants at the interface between temporal intervals, since the reduced solution is projected into the
space spanned by the updated basis.
However, the preservation of the symplectic structure both in the reduction and in the discretization yields a good control on the Hamiltonian error, as it can be observed in Figure \ref{fig:error_Ham_SWE1D}.}

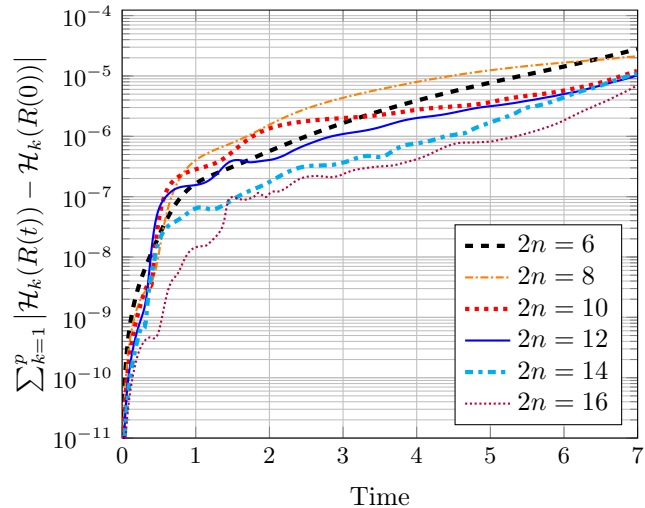
\begin{figure}[H]
\centering
\begin{tikzpicture}[scale=1]
    \begin{axis}[xlabel= {Time},
    			 ylabel={$\sum_{k=1}^{\Np}\big|\Hcal_k(R(t))-\Hcal_k(R(0))\big|$},
                 grid=both,
                 ymode=log,
                 ymin = 1e-11,
                 xmin = 0,
                 xmax = 7,
                 xlabel style={font=\normalsize},
                 ylabel style={at={(0.001,0.5)},font=\normalsize},
                 x tick label style={font=\small},
                 y tick label style={font=\small},
                 legend pos = south east]
\addplot+[mark=none,color=black,ultra thick,dashed] table[x=Timing,y=Error] {dynROM6.txt};
\addplot+[mark=none,color=orange,thick,densely dashdotted] table[x=Timing,y=Error] {dynROM8.txt};
\addplot+[mark=none,color=red,ultra thick,dotted] table[x=Timing,y=Error] {dynROM10.txt};
\addplot+[mark=none,color=blue, thick,solid] table[x=Timing,y=Error] {dynROM12.txt};
\addplot+[mark=none,color=cyan,ultra thick,dashdotted] table[x=Timing,y=Error] {dynROM14.txt};
\addplot+[mark=none,color=purple,thick,densely dotted] table[x=Timing,y=Error] {dynROM16.txt};
         \legend{$\Nr=6\phantom{1}$,$\Nr=8\phantom{1}$,$\Nr=10$,
         $\Nr=12$,
         $\Nr=14$,
         $\Nr=16$};
    \end{axis}
\end{tikzpicture}
\caption{Evolution of the error in the conservation of the Hamiltonian.}
\label{fig:error_Ham_SWE1D}
\end{figure}

\section{Concluding remarks and future work}\label{sec:conclusions}
Nonlinear dynamical reduced basis methods
for parameterized finite-dimensional Hamiltonian systems
have been developed \bl{to mitigate the computational burden of
large-scale, multi-query and long-time simulations.
The proposed} techniques provide an attractive computational
approach to deal with the local low-rank nature of Hamiltonian dynamics
while preserving the geometric structure of the phase space
even at the discrete level.

Possible extensions of this work involve
the numerical study of the proposed
algorithm including high order splitting temporal integrators,
numerical approximations ensuring the
exact conservation of Hamiltonian, and
restarting procedures of the Cayley RK algorithm.
%
Moreover, the extension of dynamical reduced basis methods to Hamiltonian systems
with a nonlinear Poisson structure would allow nonlinear structure-preserving
model order reduction of a large class of problems, including
Euler and Vlasov--Maxwell equations.
Some of these topics will be investigated in forthcoming works.

\begin{acknowledgment*}
The author gratefully acknowledges many fruitful discussions with Jan S. Hesthaven,
and would like to thank Elena Celledoni for pointing out references \cite{CO02,CO03}.
\end{acknowledgment*}

\section*{References}\addcontentsline{toc}{section}{References}
{
\small
\printbibliography[heading=none] 

@book{AbMa78,
    AUTHOR = {Abraham, Ralph and Marsden, Jerrold E.},
     TITLE = {Foundations of mechanics. {S}econd edition},
 PUBLISHER = {Addison-Wesley Publishing Company, Inc., Redwood City, CA.},
      YEAR = {1987}}

@article {AH17,
    AUTHOR = {Afkham, Babak Maboudi and Hesthaven, Jan Sickmann},
     TITLE = {Structure preserving model reduction of parametric
              {H}amiltonian systems},
   JOURNAL = {SIAM J. Sci. Comput.},
  FJOURNAL = {SIAM Journal on Scientific Computing},
    VOLUME = {39},
      YEAR = {2017},
    NUMBER = {6},
     PAGES = {A2616--A2644},
      ISSN = {1064-8275},
   MRCLASS = {65P10 (34C20 35B30 37J25 37K05 78M34)},
  MRNUMBER = {3724257},
MRREVIEWER = {Christian P\"otzsche},
       DOI = {10.1137/17M1111991},
       URL = {https://doi.org/10.1137/17M1111991}
}

@article {BCR02,
    AUTHOR = {Blanes, S. and Casas, F. and Ros, J.},
     TITLE = {High order optimized geometric integrators for linear
              differential equations},
   JOURNAL = {BIT},
  FJOURNAL = {BIT. Numerical Mathematics},
    VOLUME = {42},
      YEAR = {2002},
    NUMBER = {2},
     PAGES = {262--284},
      ISSN = {0006-3835},
   MRCLASS = {65L05 (34A45 37M99)},
  MRNUMBER = {1912587},
       DOI = {10.1023/A:1021942823832},
       URL = {https://doi.org/10.1023/A:1021942823832},
}

@article{BBH19,
    AUTHOR = {Buchfink, Patrick and Bhatt, Ashish and Haasdonk, Bernard},
     TITLE = {Symplectic model order reduction with non-orthonormal bases},
   JOURNAL = {Math. Comput. Appl.},
  FJOURNAL = {Mathematical \& Computational Applications},
    VOLUME = {24},
      YEAR = {2019},
    NUMBER = {2},
     PAGES = {Paper No. 43, 26},
      ISSN = {1300-686X},
   MRCLASS = {65P99},
  MRNUMBER = {3977957},
       URL = {https://doi.org/10.3390/mca24020043}}

@book{CdS01,
    AUTHOR = {Cannas, da Silva, Ana},
     TITLE = {Lectures on symplectic geometry},
    SERIES = {Lecture Notes in Mathematics},
    VOLUME = {1764},
 PUBLISHER = {Springer-Verlag, Berlin},
      YEAR = {2001},
     PAGES = {xii+217},
      ISBN = {3-540-42195-5},
%   MRCLASS = {53Dxx (53-01)},
%  MRNUMBER = {1853077},
%MRREVIEWER = {Brendan J. Foreman},
       DOI = {10.1007/978-3-540-45330-7},
       URL = {https://doi.org/10.1007/978-3-540-45330-7}
}

@article {CTB15,
    AUTHOR = {Carlberg, Kevin and Tuminaro, Ray and Boggs, Paul},
     TITLE = {Preserving {L}agrangian structure in nonlinear model reduction
              with application to structural dynamics},
   JOURNAL = {SIAM J. Sci. Comput.},
  FJOURNAL = {SIAM Journal on Scientific Computing},
    VOLUME = {37},
      YEAR = {2015},
    NUMBER = {2},
     PAGES = {B153--B184},
      ISSN = {1064-8275},
%   MRCLASS = {65P10 (15A83 37K05 53D20 65D05 65F30)},
%  MRNUMBER = {3319852},
%MRREVIEWER = {Brian L. Burrows},
       DOI = {10.1137/140959602},
       URL = {https://doi.org/10.1137/140959602}
}

@article {CB03,
    AUTHOR = {Casas, F. and Owren, B.},
     TITLE = {Cost efficient {L}ie group integrators in the {RKMK} class},
   JOURNAL = {BIT},
  FJOURNAL = {BIT. Numerical Mathematics},
    VOLUME = {43},
      YEAR = {2003},
    NUMBER = {4},
     PAGES = {723--742},
      ISSN = {0006-3835},
   MRCLASS = {65L06 (17B99)},
  MRNUMBER = {2068267},
MRREVIEWER = {Bojan Orel},
       DOI = {10.1023/B:BITN.0000009959.29287.d4},
       URL = {https://doi.org/10.1023/B:BITN.0000009959.29287.d4}
}

@article{CI00,
    AUTHOR = {Celledoni, Elena and Iserles, Arieh},
     TITLE = {Approximating the exponential from a {L}ie algebra to a {L}ie
              group},
   JOURNAL = {Math. Comp.},
  FJOURNAL = {Mathematics of Computation},
    VOLUME = {69},
      YEAR = {2000},
    NUMBER = {232},
     PAGES = {1457--1480},
      ISSN = {0025-5718},
%   MRCLASS = {22E05 (17B45 22-04 65D15 65F30)},
%  MRNUMBER = {1709149},
%MRREVIEWER = {Michael W\"{u}stner},
       DOI = {10.1090/S0025-5718-00-01223-0},
       URL = {https://doi.org/10.1090/S0025-5718-00-01223-0}
}

@article {CO02,
    AUTHOR = {Celledoni, Elena and Owren, Brynjulf},
     TITLE = {A class of intrinsic schemes for orthogonal integration},
   JOURNAL = {SIAM J. Numer. Anal.},
  FJOURNAL = {SIAM Journal on Numerical Analysis},
    VOLUME = {40},
      YEAR = {2002},
    NUMBER = {6},
     PAGES = {2069--2084},
      ISSN = {0036-1429},
   MRCLASS = {65L05 (34C40 34D08)},
  MRNUMBER = {1974175},
MRREVIEWER = {Boles\l aw Kacewicz},
       DOI = {10.1137/S0036142901385143},
       URL = {https://doi.org/10.1137/S0036142901385143}
}

@article {CO03,
    AUTHOR = {Celledoni, Elena and Owren, Brynjulf},
     TITLE = {On the implementation of {L}ie group methods on the {S}tiefel
              manifold},
   JOURNAL = {Numer. Algorithms},
  FJOURNAL = {Numerical Algorithms},
    VOLUME = {32},
      YEAR = {2003},
    NUMBER = {2-4},
     PAGES = {163--183},
      ISSN = {1017-1398},
   MRCLASS = {65L05 (14M17 37M25 37M99)},
  MRNUMBER = {1989365},
MRREVIEWER = {Antonella Zanna},
       DOI = {10.1023/A:1024079724094},
       URL = {https://doi.org/10.1023/A:1024079724094}
       }

@inbook{deVore17,
	AUTHOR = {DeVore, Ronald A.},
	 TITLE = {The theoretical foundation of reduced basis methods},
 BOOKTITLE = {Model reduction and approximation: theory and algorithms},
   CHAPTER = {3},
 PUBLISHER = {Society for Industrial and Applied Mathematics},
      year = {2017},
%    editor = {Benner, Peter and Ohlberger, Mario and Cohen, Albert and Willcox, Karen},
     pages = {137-168}}

@article {DLP98,
    AUTHOR = {Diele, Fasma and Lopez, Luciano and Peluso, Roberto},
     TITLE = {The {C}ayley transform in the numerical solution of unitary
              differential systems},
   JOURNAL = {Adv. Comput. Math.},
  FJOURNAL = {Advances in Computational Mathematics},
    VOLUME = {8},
      YEAR = {1998},
    NUMBER = {4},
     PAGES = {317--334},
      ISSN = {1019-7168},
   MRCLASS = {65L99},
  MRNUMBER = {1637613},
MRREVIEWER = {Lucas J\'{o}dar},
       DOI = {10.1023/A:1018908700358},
       URL = {https://doi.org/10.1023/A:1018908700358}
}

@article {EAS99,
    AUTHOR = {Edelman, Alan and Arias, Tom\'{a}s A. and Smith, Steven T.},
     TITLE = {The geometry of algorithms with orthogonality constraints},
   JOURNAL = {SIAM J. Matrix Anal. Appl.},
  FJOURNAL = {SIAM Journal on Matrix Analysis and Applications},
    VOLUME = {20},
      YEAR = {1999},
    NUMBER = {2},
     PAGES = {303--353},
      ISSN = {0895-4798},
   MRCLASS = {65F30 (15A18 51F20 65H10 90C30)},
  MRNUMBER = {1646856},
MRREVIEWER = {Michael M. Dediu},
       DOI = {10.1137/S0895479895290954},
       URL = {https://doi.org/10.1137/S0895479895290954},
}

@article {FL18,
    AUTHOR = {Feppon, Florian and Lermusiaux, Pierre F. J.},
     TITLE = {A geometric approach to dynamical model order reduction},
   JOURNAL = {SIAM J. Matrix Anal. Appl.},
  FJOURNAL = {SIAM Journal on Matrix Analysis and Applications},
    VOLUME = {39},
      YEAR = {2018},
    NUMBER = {1},
     PAGES = {510--538},
      ISSN = {0895-4798},
   MRCLASS = {65M75 (15A23 35R60 53A07 53B21 60H25 65F99)},
  MRNUMBER = {3780746},
MRREVIEWER = {Panos Stinis},
       DOI = {10.1137/16M1095202},
       URL = {https://doi.org/10.1137/16M1095202}
}

@book{HaLuWa06,
    AUTHOR = {Hairer, Ernst and Lubich, Christian and Wanner, Gerhard},
     TITLE = {Geometric numerical integration},
    SERIES = {Springer Series in Computational Mathematics},
    VOLUME = {31},
   EDITION = {Second},
      NOTE = {Structure-preserving algorithms for ordinary differential
              equations},
 PUBLISHER = {Springer-Verlag, Berlin},
      YEAR = {2006},
     PAGES = {xviii+644},
      ISBN = {3-540-30663-3},
       URL = {https://doi.org/10.1007/3-540-30666-8},
   MRCLASS = {65-02 (37M15 65Lxx 65P10 70-08)},
  MRNUMBER = {2221614}}

@article{HP18,
	author = {Hesthaven, Jan S. and Pagliantini, Cecilia},
     title = {Structure-preserving reduced basis methods for {P}oisson systems}, 
   journal = {Math. Comp.},
  fjournal = {Mathematics of Computation},
      year = {2020},
       url = {https://doi.org/10.1090/mcom/3618}}

@article{Iserles01,
    AUTHOR = {Iserles, Arieh},
     TITLE = {On {C}ayley-transform methods for the discretization of
              {L}ie-group equations},
   JOURNAL = {Found. Comput. Math.},
  FJOURNAL = {Foundations of Computational Mathematics. The Journal of the
              Society for the Foundations of Computational Mathematics},
    VOLUME = {1},
      YEAR = {2001},
    NUMBER = {2},
     PAGES = {129--160},
      ISSN = {1615-3375},
   MRCLASS = {65L05 (22E60 37M99)},
  MRNUMBER = {1830033},
MRREVIEWER = {Antonella Zanna},
       DOI = {10.1007/s102080010003},
       URL = {https://doi.org/10.1007/s102080010003}
}

@article{IMNZ00,
    AUTHOR = {Iserles, Arieh and Munthe-Kaas, Hans Z. and N{\o}rsett, Syvert P.
              and Zanna, Antonella},
     TITLE = {Lie-group methods},
 fjournal = {Acta numerica},
    journal = {Acta Numer.},
    VOLUME = {9},
     PAGES = {215--365},
 PUBLISHER = {Cambridge Univ. Press, Cambridge},
      YEAR = {2000},
   MRCLASS = {37M99 (34A26 34A45 37M15 65L05)},
  MRNUMBER = {1883629},
MRREVIEWER = {Benedict J. Leimkuhler},
       DOI = {10.1017/S0962492900002154},
       URL = {https://doi.org/10.1017/S0962492900002154},
}

@article {KL07,
    AUTHOR = {Koch, Othmar and Lubich, Christian},
     TITLE = {Dynamical low-rank approximation},
   JOURNAL = {SIAM J. Matrix Anal. Appl.},
  FJOURNAL = {SIAM Journal on Matrix Analysis and Applications},
    VOLUME = {29},
      YEAR = {2007},
    NUMBER = {2},
     PAGES = {434--454},
      ISSN = {0895-4798},
   MRCLASS = {65F30},
  MRNUMBER = {2318357},
       DOI = {10.1137/050639703},
       URL = {https://doi.org/10.1137/050639703}
}

@book{KN63,
   author =    {Kobayashi, Shoshichi and Nomizu, Katsumi},
   title =     {Foundations of Differential Geometry, Volume 1},
   publisher = {Wiley-Interscience},
   isbn =      {9780471157335,0471157333},
   year =      {1963},
   series =    {Wiley Classics Library},
   edition =   {},
   volume =    {}}

@article {LKM03,
    AUTHOR = {Lall, Sanjay and Krysl, Petr and Marsden, Jerrold E.},
     TITLE = {Structure-preserving model reduction for mechanical systems},
   JOURNAL = {Phys. D},
  FJOURNAL = {Physica D. Nonlinear Phenomena},
    VOLUME = {184},
      YEAR = {2003},
    NUMBER = {1-4},
     PAGES = {304--318},
      ISSN = {0167-2789},
   MRCLASS = {70H03 (37M05 65N30 70-08)},
  MRNUMBER = {2030691},
MRREVIEWER = {Benedict J. Leimkuhler},
%      NOTE = {Complexity and nonlinearity in physical systems (Tucson, AZ, 2001)},
       DOI = {10.1016/S0167-2789(03)00227-6},
       URL = {https://doi.org/10.1016/S0167-2789(03)00227-6}
}

@article {LP01,
    AUTHOR = {Lopez, Luciano and Politi, Tiziano},
     TITLE = {Applications of the {C}ayley approach in the numerical
              solution of matrix differential systems on quadratic groups},
   JOURNAL = {Appl. Numer. Math.},
  FJOURNAL = {Applied Numerical Mathematics. An IMACS Journal},
    VOLUME = {36},
      YEAR = {2001},
    NUMBER = {1},
     PAGES = {35--55},
      ISSN = {0168-9274},
   MRCLASS = {65L05 (65P40)},
  MRNUMBER = {1808123},
MRREVIEWER = {Xin Yuan Wu},
       DOI = {10.1016/S0168-9274(99)00049-5},
       URL = {https://doi.org/10.1016/S0168-9274(99)00049-5}
}

@book{Lubich08,
    AUTHOR = {Lubich, Christian},
     TITLE = {From quantum to classical molecular dynamics: reduced models
              and numerical analysis},
 PUBLISHER = {European Mathematical Society (EMS), Z\"{u}rich},
      YEAR = {2008},
%    SERIES = {Zurich Lectures in Advanced Mathematics},
     PAGES = {x+144},
      ISBN = {978-3-03719-067-8},
   MRCLASS = {81V55 (65M15 81-08)},
  MRNUMBER = {2474331},
MRREVIEWER = {H. Hogreve},
       DOI = {10.4171/067},
       URL = {https://doi.org/10.4171/067}
}

@article {MK95,
    AUTHOR = {Munthe-Kaas, Hans},
     TITLE = {Lie-{B}utcher theory for {R}unge-{K}utta methods},
   JOURNAL = {BIT},
  FJOURNAL = {BIT. Numerical Mathematics},
    VOLUME = {35},
      YEAR = {1995},
    NUMBER = {4},
     PAGES = {572--587},
      ISSN = {0006-3835},
   MRCLASS = {65L06},
  MRNUMBER = {1431350},
MRREVIEWER = {Masaharu Nakashima},
       DOI = {10.1007/BF01739828},
       URL = {https://doi.org/10.1007/BF01739828},
}

@article{MKZ97,
    AUTHOR = {Munthe-Kaas, Hans and Zanna, Antonella},
     TITLE = {Numerical integration of differential equations on homogeneous
              manifolds},
   JOURNAL = {Found. Comput. Math.},
  fjournal = {Foundations of computational mathematics},
     PAGES = {305--315},
 PUBLISHER = {Springer},
      YEAR = {1997},
      url = {https://doi.org/10.1007/978-3-642-60539-0_24},
   MRCLASS = {65L99 (22E99 34A45 34C40 58F25 65M99)},
  MRNUMBER = {1661989},
}

@article {MK98,
    AUTHOR = {Munthe-Kaas, Hans},
     TITLE = {Runge-{K}utta methods on {L}ie groups},
   JOURNAL = {BIT},
  FJOURNAL = {BIT. Numerical Mathematics},
    VOLUME = {38},
      YEAR = {1998},
    NUMBER = {1},
     PAGES = {92--111},
      ISSN = {0006-3835},
   MRCLASS = {65L06},
  MRNUMBER = {1621080},
MRREVIEWER = {G. W. Hedstrom},
       DOI = {10.1007/BF02510919},
       URL = {https://doi.org/10.1007/BF02510919},
}

@article{MK99,
    AUTHOR = {Munthe-Kaas, Hans},
     TITLE = {High order {R}unge-{K}utta methods on manifolds},
   JOURNAL = {Appl. Numer. Math.},
  FJOURNAL = {Applied Numerical Mathematics. An IMACS Journal},
    VOLUME = {29},
      YEAR = {1999},
    NUMBER = {1},
     PAGES = {115--127},
      ISSN = {0168-9274},
   MRCLASS = {65L06 (34A26 34A45 34C40)},
  MRNUMBER = {1662814},
       DOI = {10.1016/S0168-9274(98)00030-0},
       URL = {https://doi.org/10.1016/S0168-9274(98)00030-0}
}

@Techreport{MN17,
     author = {Musharbash, Eleonora and Nobile, Fabio},
      title = {Symplectic Dynamical Low Rank approximation of wave  equations with random parameters},
     number = {18.2017},
     address = {EPFL, Switzerland},
       year = {2017},
       doi = {10.5075/epfl-MATHICSE-263561}}

@article{MNZ15,
    AUTHOR = {Musharbash, Eleonora and Nobile, Fabio and Zhou, Tao},
     TITLE = {Error analysis of the dynamically orthogonal approximation of
              time dependent random {PDE}s},
   JOURNAL = {SIAM J. Sci. Comput.},
  FJOURNAL = {SIAM Journal on Scientific Computing},
    VOLUME = {37},
      YEAR = {2015},
    NUMBER = {2},
     PAGES = {A776--A810},
      ISSN = {1064-8275},
   MRCLASS = {65M75 (35K20 35R60 65C20 65M12 65M15)},
  MRNUMBER = {3324979},
       DOI = {10.1137/140967787},
       URL = {https://doi.org/10.1137/140967787}
}

@article{PM16,
    AUTHOR = {Peng, Liqian and Mohseni, Kamran},
     TITLE = {Symplectic model reduction of {H}amiltonian systems},
   JOURNAL = {SIAM J. Sci. Comput.},
  FJOURNAL = {SIAM Journal on Scientific Computing},
    VOLUME = {38},
      YEAR = {2016},
    NUMBER = {1},
     PAGES = {A1--A27},
      ISSN = {1064-8275},
   MRCLASS = {65P10 (37J25 37M15 53D20 93A15 93B11)},
  MRNUMBER = {3439768},
MRREVIEWER = {Tomasz Artur Stachowiak},
       DOI = {10.1137/140978922},
       URL = {https://doi.org/10.1137/140978922}
}

@article {SL09,
    AUTHOR = {Sapsis, Themistoklis P. and Lermusiaux, Pierre F. J.},
     TITLE = {Dynamically orthogonal field equations for continuous
              stochastic dynamical systems},
   JOURNAL = {Phys. D},
  FJOURNAL = {Physica D. Nonlinear Phenomena},
    VOLUME = {238},
      YEAR = {2009},
    NUMBER = {23-24},
     PAGES = {2347--2360},
      ISSN = {0167-2789},
   MRCLASS = {60G35 (35R60 37N10 65C30 76D06)},
  MRNUMBER = {2576078},
       DOI = {10.1016/j.physd.2009.09.017},
       URL = {https://doi.org/10.1016/j.physd.2009.09.017}
}

@article{SSN14,
  author = {Stef{\u{a}}nescu, R{\u{a}}zvan and Sandu, Adrian and Navon, Ionel M},
  title = {Comparison of {POD} reduced order strategies for the nonlinear {2D} shallow water equations},
  journal={International Journal for Numerical Methods in Fluids},
  volume={76},
  number={8},
  pages={497--521},
  year={2014},
  publisher={Wiley Online Library},
  url = { https://doi.org/10.1002/fld.3946}}

@book{Wood50,
    AUTHOR = {Woodbury, Max A.},
     TITLE = {Inverting modified matrices},
 PUBLISHER = {Princeton University, Princeton, N. J.},
      YEAR = {1950},
%    SERIES = {Statistical Research Group, Memo. Rep. no. 42},
     PAGES = {4},
   MRCLASS = {65.0X},
  MRNUMBER = {0038136},
MRREVIEWER = {J. Kuntzmann}}
}

\appendix
\section{Exponential map}\label{sec:exp}

\bl{Let us consider Algorithm~\ref{algo:RKMK} with the exponential as coordinate map, namely $\coo=\exp:\fr{g}_{\Nf}\rightarrow\fr{g}_{\Nf}$.
For any $\mmi\in\fr{g}_{\Nf}$ and $\umi\in\Mcal$, the matrix $\dexp_{\mmi}^{-1}(\Lcal(\umi))$ in Line~\ref{line:Mi} can be approximated by truncating the Baker--Campbell--Hausdorff (BCH) formula as
\begin{equation}\label{eq:BCHapp}
\dexp_{\mmi}^{-1}(\Lcal(\umi))\approx
\widehat{\Lambda}_i:= 
\sum_{k=0}^{q} \dfrac{\mathcal{B}_k}{k!}\,\ad{\mmi}^{k}(\Lcal(\umi)),\quad\mbox{for some } q\in\mathbb{N},
\end{equation}
where $\mathcal{B}_k$ denotes the $k$-th Bernoulli number and $\ad{\mmi}^0(\Lcal(\umi))=\Lcal(\umi)$.
Observe that $\widehat{\Lambda}_i\in\R{\Nf}{\Nf}$ belongs to the Lie algebra $\fr{g}_{\Nf}$, and, hence, each $\mmi$ is in $\fr{g}_{\Nf}$ and the solution of the RK-MK method remains on $\Mcal$, see e.g. \cite[Theorem 8.4]{HaLuWa06}.

To assess the computational complexity of Algorithm~\ref{algo:RKMK},
we need to consider
two operations: the
evaluation of $\widehat{\Lambda}_i$ and the computation of $\umi=\exp(\mmi)U_m$,
for any $i=2,\ldots,\Ns$. To this aim, we first rewrite each commutator in \eqref{eq:BCHapp} as a matrix polynomial.
\begin{lemma}\label{lem:pol}
Let $A,B\in\R{\Nf}{\Nf}$. For any fixed $k\geq 0$, there exist coefficients $\{c^{(k)}_h\}_{h=0}^k\subset\mathbb{R}$ such that
\begin{equation}\label{eq:pol}
	\ad{A}^k(B) = \sum_{h=0}^k c_h^{(k)} A^h B A^{k-h}.
\end{equation}
\end{lemma}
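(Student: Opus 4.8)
The plan is to prove \eqref{eq:pol} by induction on $k$, building $\ad{A}$ out of the two elementary commuting operators of left- and right-multiplication, $L_A\colon M\mapsto AM$ and $R_A\colon M\mapsto MA$, so that $\ad{A}=L_A-R_A$. Since $L_A$ and $R_A$ commute, the binomial theorem already makes it plausible that each $\ad{A}^k(B)=(L_A-R_A)^k(B)$ is a signed-binomial combination of the monomials $A^hBA^{k-h}$, and the induction will make this precise while producing the coefficients $c_h^{(k)}$ explicitly.

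For the base case $k=0$ I would take $c_0^{(0)}=1$, so that \eqref{eq:pol} reduces to the convention $\ad{A}^0(B)=B$. For the inductive step, assuming \eqref{eq:pol} holds at level $k$, I would compute
\begin{equation*}
	\ad{A}^{k+1}(B)=A\,\ad{A}^k(B)-\ad{A}^k(B)\,A
	=\sum_{h=0}^{k} c_h^{(k)} A^{h+1}BA^{k-h}-\sum_{h=0}^{k} c_h^{(k)} A^{h}BA^{k+1-h}.
\end{equation*}
Reindexing the first sum by $h\mapsto h-1$ and collecting the coefficient of $A^hBA^{k+1-h}$ for $0\le h\le k+1$ then yields the Pascal-type recurrence $c_h^{(k+1)}=c_{h-1}^{(k)}-c_h^{(k)}$, under the vanishing conventions $c_{-1}^{(k)}=c_{k+1}^{(k)}=0$. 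This confirms the claimed existence of the coefficients at level $k+1$ and completes the induction.

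If a closed form is desired, I would then verify, again by a short induction, that this recurrence is solved by $c_h^{(k)}=(-1)^{k-h}\binom{k}{h}$, which is exactly the signed binomial expansion of $(L_A-R_A)^k$ predicted by the commutativity of $L_A$ and $R_A$; the required identity $\binom{k}{h-1}+\binom{k}{h}=\binom{k+1}{h}$ is precisely Pascal's rule. The only delicate point in the whole argument is the bookkeeping of the index shift in the inductive step, in particular making sure the boundary terms $h=0$ and $h=k+1$ are absorbed correctly by the vanishing conventions; this is routine and poses no genuine obstacle.
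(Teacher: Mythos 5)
Your proof is correct and follows essentially the same route as the paper: induction on $k$, expanding $\ad{A}^{k+1}(B)=A\,\ad{A}^k(B)-\ad{A}^k(B)A$ and reading off the Pascal-type recurrence $c_h^{(k+1)}=c_{h-1}^{(k)}-c_h^{(k)}$ with the boundary conventions. The closed form $c_h^{(k)}=(-1)^{k-h}\binom{k}{h}$ and the $L_A-R_A$ interpretation are a nice extra but not needed for the statement.
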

\begin{proof}
We proceed by induction on $k$. For $k=0$, $\ad{A}^0(B)=B$ and $c^{(0)}_0=1$.
For $k=1$, $\ad{A}(B)=[A,B]=AB-BA=c^{(1)}_0 BA + c^{(1)}_1 AB$, with
$c^{(1)}_0=-1$ and $c^{(1)}_0=1$.
Assume that $\ad{A}^{k-1}(B)$, with $k\geq 2$, can be expressed in polynomial form. Then,
\begin{equation*}
\begin{aligned}
	\ad{A}^k(B) & = [A,\ad{A}^{k-1}(B)]
	= \sum_{h=0}^{k-1} c_h^{(k-1)} A^{h+1} B A^{k-1-h} - \sum_{h=0}^{k-1} c_h^{(k-1)} A^{h} B A^{k-h},
\end{aligned}
\end{equation*}
which is of the form \eqref{eq:pol}
with $c_0^{(k)}=-c_0^{(k-1)}$, $c_k^{(k)}=c_{k-1}^{(k-1)}$ and
$c_h^{(k)}=c_{h-1}^{(k-1)}-c_h^{(k-1)}$ for any $1\leq h\leq k-1$.
\end{proof}
The matrix polynomial form \eqref{eq:pol} allows to estimate the rank of the $\{\widehat{\Lambda}_i\}_{i=1}^{\Ns}$.
\begin{lemma}\label{lem:rAi}
Let $\widehat{\Lambda}_i\in\fr{g}_{\Nf}$ be defined as in \eqref{eq:BCHapp}.
Then,
\begin{equation}
	\rank{\widehat{\Lambda}_i}\leq \min\{2^{i-1}, q+1\}\,\rank{\Lcal(\umi)}.
\end{equation} 
\end{lemma}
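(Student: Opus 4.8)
The plan is to prove two separate upper bounds on $\rank{\widehat{\Lambda}_i}$, namely $(q+1)\rank{\Lcal(\umi)}$ and $2^{i-1}\rank{\Lcal(\umi)}$, and then take their minimum. Throughout I would set $r_i:=\rank{\Lcal(\umi)}$ and exploit the elementary fact that the rank of any matrix equals the dimension of its column space, so that every estimate reduces to controlling the column space of $\widehat{\Lambda}_i$. The base case $i=1$ is immediate: since $\Omega_m^1=0$, only the $k=0$ term survives in \eqref{eq:BCHapp} (as $\ad{0}^k=0$ for $k\geq 1$ and $\mathcal{B}_0=1$), whence $\widehat{\Lambda}_1=\Lcal(U_m^1)$ and $\rank{\widehat{\Lambda}_1}=r_1$, consistent with both bounds.

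For the bound $(q+1)\,r_i$, I would first invoke Lemma~\ref{lem:pol} to rewrite each commutator power as the matrix polynomial $\ad{\mmi}^{k}(\Lcal(\umi))=\sum_{h=0}^{k}c_h^{(k)}(\mmi)^h\Lcal(\umi)(\mmi)^{k-h}$, so that $\widehat{\Lambda}_i$ becomes a linear combination of monomials $(\mmi)^{h}\Lcal(\umi)(\mmi)^{k-h}$ with $0\leq h\leq k\leq q$. Taking a rank factorization $\Lcal(\umi)=PQ^\top$ with $P,Q\in\R{\Nf}{r_i}$, the column space of each such monomial is contained in that of $(\mmi)^h P$, which has dimension at most $r_i$. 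As the exponent $h$ ranges only over $0,\ldots,q$, the column space of $\widehat{\Lambda}_i$ lies in $\sum_{h=0}^{q}\mathrm{colspace}\big((\mmi)^hP\big)$, a sum of $q+1$ subspaces each of dimension at most $r_i$, giving $\rank{\widehat{\Lambda}_i}\leq (q+1)\,r_i$.

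For the bound $2^{i-1}r_i$, the key observation is a column-space containment for iterated commutators: since $\ad{\mmi}(X)=\mmi X-X\mmi$ has column space contained in $\mathrm{colspace}(\mmi)+\mathrm{colspace}(X)$, a short induction on $k$ shows $\mathrm{colspace}\big(\ad{\mmi}^{k}(\Lcal(\umi))\big)\subseteq \mathrm{colspace}(\mmi)+\mathrm{colspace}(\Lcal(\umi))$ for every $k\geq 1$. Summing over $k$ in \eqref{eq:BCHapp} then yields $\rank{\widehat{\Lambda}_i}\leq \rank{\mmi}+r_i$. Next I would use Line~\ref{line:Mi} of Algorithm~\ref{algo:RKMK}, which (in its BCH-approximated form) reads $\mmi=\dt\sum_{j=1}^{i-1}a_{i,j}\widehat{\Lambda}_j$, so that $\rank{\mmi}\leq\sum_{j=1}^{i-1}\rank{\widehat{\Lambda}_j}$; this produces the recursion $\rank{\widehat{\Lambda}_i}\leq\sum_{j=1}^{i-1}\rank{\widehat{\Lambda}_j}+r_i$. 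Solving it by strong induction, using $\sum_{j=1}^{i-1}2^{j-1}=2^{i-1}-1$, delivers $\rank{\widehat{\Lambda}_i}\leq 2^{i-1}r_i$.

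The main obstacle is precisely this final induction: the recursion mixes the ranks $\rank{\umj}$ of all earlier stages, whereas the stated bound is expressed through $r_i=\rank{\Lcal(\umi)}$ alone. To close the induction cleanly with the factor $r_i$ one needs $\rank{\Lcal(\umj)}\leq r_i$ for all $j\leq i$; in the present setting this is harmless, since every $\Lcal(\umj)=\gamma_j\delta_j^\top$ with $\gamma_j,\delta_j\in\R{\Nf}{4\Nrh}$ shares the common rank bound $4\Nrh$. Replacing each $r_j$ by the uniform bound $r:=\max_{j\leq i}\rank{\Lcal(\umj)}$ yields $\rank{\widehat{\Lambda}_i}\leq 2^{i-1}r$, and I would state this uniformity explicitly to avoid an apparent circularity in the recursive estimate.
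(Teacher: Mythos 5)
Your proof is correct and follows essentially the same route as the paper: the $(q+1)$ bound via the polynomial form of Lemma~\ref{lem:pol} after regrouping by the left power of $\mmi$, and the $2^{i-1}$ bound via the containment $\rank{\widehat{\Lambda}_i}\leq\rank{\mmi}+\rank{\Lcal(\umi)}$ (your column-space argument is equivalent to the paper's decomposition $\ad{A}^{k}(B)=AE_k+BD_k$) combined with the recursion from Line~\ref{line:Mi}. Your closing remark is a genuine refinement: the paper's final induction also silently requires $\rank{\Lcal(\umj)}\leq\rank{\Lcal(\umi)}$ for $j<i$, and your uniform bound $r:=\max_{j\leq i}\rank{\Lcal(\umj)}$ (harmless here since all $\Lcal(\umj)$ have rank at most $4\Nrh$) is the right way to make that step airtight.
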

\begin{proof}
Using Lemma~\ref{lem:pol}, we have that, for any $A,B\in\fr{g}_{\Nf}$,
\begin{equation*}
	C:=\sum_{k=0}^q \ad{A}^{k}(B) = \sum_{k=0}^q\sum_{h=0}^k c_h^{(k)} A^h B A^{k-h}
	= \sum_{h=0}^q A^h B \sum_{k=h}^q  c_h^{(k)} A^{k-h}.
\end{equation*}
Since the rank of a matrix product is bounded by the minimum among the ranks of the factors, this implies that
$\rank{C}\leq (q+1)\rank{B}$ and, hence,
$\rank{\widehat{\Lambda}_i}\leq (q+1)\, \rank{\Lcal(\umi)}$.

We now prove that, for any $0\leq k\leq q$, there exists matrices $E_k, D_k\in\R{\Nf}{\Nf}$
such that $\ad{A}^{k}(B)=A E_k + BD_k$.
We proceed by induction on $k$.
For $k=0$, $\ad{A}^0(B)=B$ so that $E_0=0$ and $D_0=\Idm_{\Nf}$.
For $k=1$,
$\ad{A}(B)=[A,B]=AB-BA$ so that $E_1=B$ and $D_1=-A$.
Assume that the statement holds for $k-1$, with $k\geq 3$, then
$\ad{A}^{k}(B)=A\, \ad{A}^{k-1}(B)-\ad{A}^{k-1}(B)A = A(AE_{k-1}+BD_{k-1})-(AE_{k-1}+BD_{k-1})A
=AE_{k} + BD_{k}$
with $E_{k}=AE_{k-1}+BD_{k-1}-E_{k-1}A$ and $D_{k}=-D_{k-1}A$.
Therefore,
$$C=\sum_{k=0}^{q}(AE_{k} + BD_{k})=A\bigg(\sum_{k=1}^{q}E_{k}\bigg)+B\bigg(\sum_{k=1}^{q}D_{k}\bigg),$$
and, hence,
$\rank{C}\leq \rank{A}+\rank{B}$.
This is equivalent to
$\rank{\widehat{\Lambda}_i}\leq\rank{\mmi}+\rank{\Lcal(\umi)}$.
Using the definition of $\mmi$ from Line~\ref{line:Mi} of Algorithm~\ref{algo:RKMK},
the rank of $\widehat{\Lambda}_i$, for any $i\geq 2$, can be bounded as
\begin{equation*}
	\rank{\widehat{\Lambda}_i} \leq \rank{\Lcal(\umi)} +
	\sum_{j=1}^{i-1}\rank{\widehat{\Lambda}_j}.
\end{equation*}
Since $\rank{\widehat{\Lambda}_1}=\rank{\Lcal(U_m)}$, it easily follows by induction that
$\rank{\widehat{\Lambda}_i} \leq 2^{i-1}\,\rank{\Lcal(\umi)}$.
\end{proof}
Observe that the factorization \eqref{eq:GUcd} implies that each term $\{\Lcal(\umi)\}_{i=1}^s$, with $\Lcal$ defined in \eqref{eq:GSFcal}, has rank at most $4\Nrh$.
Therefore, from Lemma~\ref{lem:rAi}, it follows that
\begin{equation}\label{eq:rM}
r_i:= \rank{\mmi}\leq \sum_{j=1}^{i-1} \rank{\widehat{\Lambda}_j}\leq
4\Nrh \sum_{j=1}^{i-1} \min\{2^{i-1}, q+1\}.
\end{equation}
It can be inferred from \eqref{eq:rM} that the bound $q+1$ is the one dominating in the computation of $\mmi$ whenever the number $\Ns$ of RK stages is sufficiently large. 
An optimal number $q_{\opt}$ of commutators to achieve the accuracy of the corresponding RK method can be derived as in \cite{BCR02,CB03}.
We consider a few examples from
\cite[Table 3.1]{CB03}:
RKF45 has $\Ns=6$, $q_{\opt}=5$, and hence $q_{\opt}+1 \leq 2^{i-1}$ for $i\geq 4$; 
DVERK has $\Ns=8$, $q_{\opt}=10$, and hence $q_{\opt}+1 \leq 2^{i-1}$ for $i\geq 5$;
Butcher7 has $\Ns=9$, $q_{\opt}=21$, and hence $q_{\opt}+1 \leq 2^{i-1}$ for $i\geq 6$.
In light of these considerations, we consider the bound $r_i\leq 4\Nrh(i-1)(q+1)$
although for small $i$ this might not be sharp.
With the estimate \eqref{eq:rM} on the rank of $\mmi$, we can assess the cost of computing $\umi$ at each stage of the RK-MK Algorithm~\ref{algo:RKMK}.
The computation of the exponential of a matrix in $\fr{g}_{\Nf}$ requires $O(\Nfh^3)$ operations, but this cost can be mitigated whenever the argument of the exponential is of low-rank.
Similarly to Proposition \ref{prop:Nn2}, it can be shown that the cost to compute $\exp(uv^\top)Y$ with $u,v\in\R{\Nf}{k}$ and $Y\in\R{\Nf}{\Nr}$ is $O(k^3+k^2\Nrh+\Nfh \Nrh k)$ \cite[Proposition 3]{CI00}.
In Algorithm~\ref{algo:RKMK} we need to evaluate the exponential of $\{\mmi\}_{i=1}^s$,
with $\rank{\mmi}\leq 4\Nrh (i-1)(q+1)$.
Therefore, the computation of all $\umi$ in Line~\ref{line:Ui}, for $2\leq i\leq \Ns$, requires
$O(\Nfh \Nrh^2 s^2 q+\Nrh^3 \Ns^{4} q^3)$.


The other contribution to the computational cost of Algorithm~\ref{algo:RKMK} comes from the evaluation of each $\widehat{\Lambda}_i$ in \eqref{eq:BCHapp}. To estimate this cost, we resort to the polynomial expression \eqref{eq:pol} and the low-rank splitting of $\mmi = \alpha_i\beta_i^\top$, with
$\alpha_i, \beta_i\in\R{\Nf}{r_i}$, and of $\Lcal(\umi)=\gamma_i \delta_i^\top$
with $\gamma_i, \delta_i\in\R{\Nf}{4\Nrh}$.
Let $\hat{c}_j^{(k)}:=c_j^{(k)} \Bcal_k/k!$, for any $0\leq k,j \leq q$, then
\begin{equation*}
\begin{aligned}
	\widehat{\Lambda}_i &= \hat{c}^{(0)}_0 \gamma_i\delta_i^\top
	+ \sum_{j=1}^q \big( \hat{c}_0^{(j)} \gamma_i\delta_i^\top (\alpha_i\beta_i^\top)^j + \hat{c}_j^{(j)} (\alpha_i\beta_i^\top)^j \gamma_i\delta_i^\top \big) \\
	&\quad + \sum_{j=1}^q \alpha_i (\beta_i^\top\alpha_i)^{j-1}\beta_i^\top \gamma_i \delta_i^\top \alpha_i\bigg(\sum_{k=j+1}^q  \hat{c}_j^{(k)} (\beta_i^\top\alpha_i)^{k-j-1}\bigg)\beta_i^\top\\
	& = \hat{c}^{(0)}_0 \gamma_i\delta_i^\top
	+ \sum_{j=1}^q \big( \hat{c}_0^{(j)} \gamma_i\delta_i^\top \alpha_i(\beta_i^\top\alpha_i)^{j-1}\beta_i^\top + \hat{c}_j^{(j)} \alpha_i(\beta_i^\top\alpha_i)^{j-1}\beta_i^\top \gamma_i\delta_i^\top \big) \\
	&\quad + \sum_{j=1}^q \alpha_i (\beta_i^\top\alpha_i)^{j-1}\beta_i^\top \gamma_i \delta_i^\top \alpha_i P_j \beta_i^\top, \quad\mbox{with } P_j:=\sum_{k=0}^{q-j-1}  \hat{c}_j^{(k+j+1)} (\beta_i^\top\alpha_i)^{k}.
\end{aligned}
\end{equation*}
The terms:
\begin{itemize}
\item $\{P_j\}_{j=1}^q$ can be computed in $O(\Nfh r_i^2 + q^2 r_i^3)$ operations;\\[-1.5ex]
\item $\{(\beta_i^\top\alpha_i)^{j}\}_{j=1}^q$ can be computed in $O(q r_i^3)$ operations;\\[-1.5ex]
\item $\{\beta_i^\top \gamma_i \delta_i^\top \alpha_i P_j\}_{j=1}^q$ can be computed in $O(\Nrh q r_i^2 + \Nfh \Nrh r_i)$ operations.
\end{itemize}
Therefore, the overall computational cost to evaluate each $\widehat{\Lambda}_i$ is
$O(\Nfh r_i^2 + q^2 r_i^3 + \Nrh q r_i^2 + \Nfh \Nrh r_i)$.
Using $r_i=\rank{\mmi} \leq 4\Nrh (i-1)(q+1)$ and summing over the stages of the RK scheme, all terms involved in Algorithm~\ref{algo:RKMK} can be evaluated with arithmetic complexity $O(\Nfh \Nrh^2 q^2 \Ns^3 + \Nrh^3 q^5 \Ns^4) + C_{\Fcal}$,
where $C_{\Fcal}$ is the
complexity of the algorithm to compute $\Fcal(U)$ in \eqref{eq:Fcal} at any given $U\in\Mcal$.
The latter is, thus, the computational complexity of Algorithm~\ref{algo:RKMK} with $\coo=\exp$.
Since each $\mmi$ can be written as the sum of elements in the Lie algebra $\fr{g}_{\Nf}$, namely 
$\mmi=\sum_{j=1}^{i-1}\alpha_j\beta_j^\top$ with $\alpha_j,\beta_j\in\R{\Nf}{r_i}$,
one might suggest to approximate $\exp(\mmi)$
with
$E(\mmi):=\Pi_{j=1}^{i-1}\exp(\alpha_j\beta_j^\top)$ in the spirit of \cite{CI00}.
However, such an approximation does not bring significant computational savings nor it is guaranteed to provide a good approximation of the exponential map.}

\appendix
\section{Efficient computation of the inverse tangent map}\label{app:RtangInv}
We propose an algorithm to solve \eqref{eq:invR} with a computational cost of order $O(\Nfh\Nrh^2)$.
We proceed exactly as in \cite[Section 3.2.1]{CO03} with the only difference that
we consider any arbitrary $S\in\sym(\Nr)\cap\fr{sp}(\Nr)$. 

Using the definition of the derivative of the Cayley transform \eqref{eq:dcayinv}
we can recast \eqref{eq:invR} as
\begin{equation}\label{eq:eqTM}
	\aqvt(\Idm_{\Nf}+\aqvb)^{-1}\rqv - (\Idm_{\Nf}-\aqvb)W=0,
	\qquad \aqvb:=\dfrac{\aqv}{2}.
\end{equation}
Moreover, using the definition of $\rqv$ in \eqref{eq:retraction} results in
\begin{equation*}
\begin{aligned}
	2\rqv & = \big(\Idm_{\Nf}+\aqvb\big)\rqv + \big(\Idm_{\Nf}-\aqvb\big)\rqv\\
	 	 & = \big(\Idm_{\Nf}+\aqvb\big)\rqv +
	 	 	 \big(\Idm_{\Nf}-\aqvb\big)\big(\Idm_{\Nf}-\aqvb\big)^{-1}\big(\Idm_{\Nf}+\aqvb\big)Q\\
	 	 & = \big(\Idm_{\Nf}+\aqvb\big)\big(\rqv+Q\big).
\end{aligned}
\end{equation*}
Therefore, substituting in \eqref{eq:eqTM} and using the definition of $\Upsilon_Q$ from \eqref{eq:defA} gives
\begin{equation}\label{eq:eqgsv}
	\gsv Q^\top(\rqv+Q) - Q\gsv^\top (\rqv+Q) - \left(2\Idm_{\Nf}-\aqv\right)W=0.
\end{equation}
We proceed by solving problem \eqref{eq:eqgsv} for $\gt:=\gsv\in\T{Q}{\Sp(\Nr,\r{\Nf})}$ and then,
in view of \eqref{eq:idF}, we recover
$\vt\in\bl{\TsM{Q}}$ as $\vt=\gt-Q\gt^\top Q$,
at a computational cost of order $O(\Nfh\Nrh^2)$.

It is possible to
recast problem \eqref{eq:eqgsv} as $\gsv = QT_1(\vt)+T_2$, where
\begin{equation*}
\begin{aligned}
	T_1(\vt) & := \gsv^\top (\rqv+Q)(Q^\top\rqv+\Idm_{\Nr})^{-1},\\
	T_2 & := \left(2\Idm_{\Nf}-\aqv\right)W(Q^\top\rqv+\Idm_{\Nr})^{-1}.
\end{aligned}
\end{equation*}
The term $T_2$, independent of $\vt$, can be computed in $O(\Nfh\Nrh^2+\Nrh^3)$ operations.
Indeed, since $\aqv=\alpha\beta^\top$ as defined in \eqref{eq:AB},
the term $\aqv W$ can be computed as $\alpha(\beta^\top W)$ in $O(\Nfh\Nrh^2)$ flops.
The term $T_1(\vt)$ can be expressed as $T_1(\vt) = Q^\top\gsv+Q T_2$.
Using the fact that
$Q^\top\gsv + \gsv^\top Q=2S$,
the symmetric part of $T_1$ reads $T_1+T_1^\top = 2S-Q^\top T_2+T_2^\top Q$.
Moreover,
\begin{equation*}
\begin{aligned}
	& (\rqv+Q)^\top \gsv = (\rqv+Q)^\top T_2 + (\rqv^\top Q+\Idm_{\Nr})T_1(\vt),\\
	& (\rqv^\top Q+\Idm_{\Nr})T_1(\vt)^\top = (Q^\top\rqv+\Idm_{\Nr})^\top(Q^\top\rqv+\Idm_{\Nr})^{-\top}(\rqv+Q)^\top \gsv.
\end{aligned}
\end{equation*}
The skew-symmetric part of $T_1$ is then
$T_1-T_1^\top = -(\rqv^\top Q+\Idm_{\Nr})^{-1}(\rqv+ Q)^\top T_2$.
Therefore,
\begin{equation*}
\begin{aligned}
	2 T_1(\vt) & = \big((T_1(\vt)+T_1(\vt)^\top)+(T_1(\vt)-T_1(\vt)^\top)\big)\\
		& = 2S-(Q^\top T_2-T_2^\top Q)-(\rqv^\top Q+\Idm_{\Nr})^{-1}(\rqv+ Q)^\top T_2.
\end{aligned}
\end{equation*}
It is straightforward to show that all operations involved in the computation of $T_1$ can be done
with complexity of order $O(\Nfh\Nrh^2)$.

\end{document}